\newtheorem{teo}{Theorem}[section]
\newtheorem{prop}[teo]{Proposition}
\newtheorem{lem}[teo]{Lemma}
\newtheorem{cor}[teo]{Corollary}
\newtheorem{conj}[teo]{Conjecture}
\newtheorem{defini}[teo]{Definition}
\newtheorem{rem}[teo]{Remark}
\newcommand{\SL}{{\bf SL}}
\newcommand{\Lie}{{\rm Lie}}
\newcommand{\der}{{\rm der}}
\newcommand{\ad}{{\rm ad}}
\newcommand{\CC}{{\mathbb C}}
\newcommand{\RR}{{\mathbb R}}
\newcommand{\ZZ}{{\mathbb Z}}
\newcommand{\QQ}{{\mathbb Q}}
\newcommand{\NN}{{\mathbb N}}
\title[Convergence of measures on locally symmetric spaces]{Convergence of measures on compactifications of locally symmetric spaces}
\author[Daw\and Gorodnik\and Ullmo]{Christopher Daw\and Alexander Gorodnik\and Emmanuel Ullmo}
\begin{document}

\begin{abstract}
We conjecture that the set of homogeneous probability measures on the maximal Satake compactification of an arithmetic locally symmetric space $S=\Gamma\backslash G/K$ is compact. More precisely, given a sequence of homogeneous probability measures on $S$, we expect that any weak limit is homogeneous with support contained in precisely one of the boundary components (including $S$ itself).  We introduce several tools to study this conjecture and we prove it in a number of cases, including when $G=\SL_3(\RR)$ and $\Gamma=\SL_3(\ZZ)$.
\end{abstract}

\maketitle

\tableofcontents

\section{Introduction}

\subsection{Background and Motivations}
The study of sequences of measures invariant under unipotent flows has been a central theme in homogeneous dynamics, and the deep theorems obtained have had several important arithmetic applications. Prototypical in this respect is Margulis's proof of the Oppenheim Conjecture concerning the values of irrational indefinite quadratic forms at integral vectors \cite{margulis:forms}. Margulis obtained his result by characterising ${\bf \rm SO(2,1)}$-orbits on the homogeneous space $\SL(3,\ZZ)\backslash \SL(3,\RR)$.

More generally, let $\bf{G}$ be a semisimple algebraic group over $\QQ$ and let $G:={\bf G}(\RR)^+$. Let $\Gamma\subset {\bf G}(\QQ)$ be an arithmetic lattice (henceforth known as an arithmetic subgroup) contained in $G$ and let ${\bf H}_\RR$ be a real algebraic subgroup of ${\bf G}_{\RR}$ such that $H:={\bf H}_{\RR}(\RR)^+$ is generated by its one parameter unipotent subgroups. In her seminal works \cite{ratner:measure}, \cite{ratner:flows}, Ratner obtained a classification of the $H$-invariant ergodic measures on the homogeneous space $\Gamma\backslash G$ and proved that the closure of an $H$-orbit
is a homogeneous subspace of $\Gamma\backslash G$. 

For any algebraic $\QQ$-subgroup $\bf{H}$ of $\bf{G}$ without $\QQ$-rational 
characters, $\Gamma\backslash G$ admits
a canonical $H$-invariant probability measure $\mu_H$ with support 
$\Gamma\cap H\backslash H\subseteq \Gamma\backslash G$, where $H:={\bf H}(\RR)^+$. For any $g\in G$, we denote by 
$\mu_{H,g}$ the push-forward of $\mu_H$ by the right-multiplication-by-$g$ map. That is, 
$\mu_{H,g}$ is supported on $\Gamma\cap H\backslash Hg$. Such a measure is called {\it homogeneous} or {\it arithmetic}.

Mozes-Shah \cite{MS:ergodic} and Eskin-Mozes-Shah \cite{EMS:flows}, \cite{EMS:nondivergence}
started the study of weak convergence of sequences $(\mu_n)_{n\in \NN}=(\mu_{H_{n,g_n}})_{n\in \NN}$ of homogeneous measures associated with sequences $({\bf H}_n)_{n\in \NN}$ of subgroups of $\bf{G}$ and sequences
$(g_n)_{n\in \NN}$ of elements of $G$. 
With natural hypotheses on the $H_n$ and $g_n$, it follows from the work of Mozes-Shah \cite{MS:ergodic} that, if $\mu$ is a weak limit of $(\mu_n)_{n\in \NN}$ in the space of probability measures on $\Gamma\backslash G$, then $\mu$ is homogeneous itself. Furthermore, they showed that, if $\mu$ is a weak limit of $(\mu_n)_{n\in \NN}$ in the space of probability measures on the one-point-compactification $\Gamma\backslash G\cup \{\infty\}$ of $\Gamma\backslash G$, then $\mu$ is either a homogeneous probability measure on $\Gamma\backslash G$, or equal to the Dirac delta measure at infinity. In \cite{EMS:nondivergence}, building on the earlier work of Dani-Margulis \cite{DM:uniflows}, Eskin-Mozes-Shah proved a non-divergence criterion for sequences of homogeneous measures and, motivated by a counting problem for lattice points on homogeneous varieties, applied this to show in \cite{EMS:flows} that, when, for all $n\in\NN$, ${\bf H}_n={\bf H}$, for a fixed reductive subgroup ${\bf H}$ of ${\bf G}$ not contained in a proper $\QQ$-parabolic subgroup of $\bf G$, any weak limit $\mu$ of $\mu_n=\mu_{H,g_n}$ is homogeneous. All of these works relied on the fundamental results of Ratner. 

The aim of this paper is to study these questions for (arithmetic) locally symmetric spaces in the case when a sequence of homogeneous measures diverges. More precisely, we study weak limits of homogeneous measures on suitable compactifications of locally symmetric spaces.
 
Fix a maximal compact subgroup $K$ of $G$ and denote by $X=G/K$ the associated Riemannian symmetric space. Let $S=\Gamma\backslash X$ be the corresponding locally symmetric space and let
\begin{align*}
\pi: \Gamma\backslash G \longrightarrow S:\Gamma g\mapsto \Gamma g K
\end{align*}
be the projection map. A {\it homogeneous probability measure} on $S$ is defined as the push forward 
 $\pi_*(\mu_{H,g})$ of a homogeneous probability measure $\mu_{H,g}$ on $\Gamma\backslash G$, for any ${\bf H}$ of type $\mathcal{H}$ (see Section \ref{typeH}).
 
The key point is that locally symmetric spaces have natural compactifications of the form
 \begin{equation}
 \overline{\Gamma\backslash X}^S_{\rm max}= \Gamma\backslash X\cup\coprod_{\bf P} \Gamma_P\backslash X_P,
 \end{equation}
where ${\bf P}$ varies over a (finite) set of representatives for the $\Gamma$-conjugacy classes of $\QQ$-parabolic
subgroups of ${\bf G}$, and the boundary components $\Gamma_P\backslash X_P$ are themselves locally symmetric spaces. We will be mainly concerned with the maximal Satake compactification of $S$ in this text, but we also discuss the Baily-Borel compactification when $S$ is a hermitian locally symmetric space.

We make the following, seemingly natural conjecture, which we state in a more precise form in Section \ref{s3} (see Conjectures \ref{mainconj} and \ref{bbconj}).

\begin{conj}\label{introconj}
Suppose that $\mu$ is a probability measure on $\overline{\Gamma\backslash X}^S_{\rm max}$ equal to the weak limit of a sequence  $(\mu_{H_n,g_n})_{n\in\NN}$ of homogeneous probability measures on $S$. Then $\mu$ is a homogeneous probability measure supported on precisely one of the boundary components.

\end{conj}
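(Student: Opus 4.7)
The plan is to combine reduction theory, the inductive structure of the maximal Satake compactification, and the Mozes--Shah theorem applied on a suitable Levi subgroup. The strategy splits naturally according to whether mass escapes to infinity in $\Gamma\backslash G$.

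First, I would lift the problem from $S$ to $\Gamma\backslash G$ and consider the sequence $(\mu_{H_n,g_n})_{n\in \NN}$ there. If no mass escapes, then by Mozes--Shah the weak limit on $\Gamma\backslash G$ is itself homogeneous, and its push-forward under $\pi$ is a homogeneous measure supported on the interior stratum $S$ of $\overline{\Gamma\backslash X}^S_{\rm max}$, giving the conclusion with $S$ itself as the distinguished component. The essential case is therefore that in which a positive proportion of the mass of the $\mu_n$ escapes every compact subset of $\Gamma\backslash G$.

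In that case, by reduction theory I would write $g_n=\gamma_n p_n k_n$ with $p_n$ in a Siegel set attached to some $\QQ$-parabolic $\mathbf{P}_n$. Because there are finitely many $\Gamma$-conjugacy classes of $\QQ$-parabolics of $\mathbf{G}$, after passing to a subsequence and replacing $\mathbf{H}_n$ by a conjugate I may assume that $\mathbf{P}_n=\mathbf{P}$ is fixed. Langlands-decompose $p_n=n_n a_n m_n$ and, by a further subsequence, arrange that $a_n$ converges in the natural compactification of the closed Weyl chamber of $\mathbf{P}$ to a point whose stabilizer is a well-defined $\QQ$-parabolic $\mathbf{Q}\supseteq\mathbf{P}$. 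The structure of the maximal Satake compactification then singles out the boundary component $\Gamma_Q\backslash X_Q$ toward which the mass drifts; uniqueness of $\mathbf{Q}$ for a convergent sequence in the compactified Weyl chamber is what yields the ``precisely one component'' assertion. It then remains to identify the limit measure: tracking the $\mu_{H_n,g_n}$ through the Langlands decomposition and conjugating by $a_n m_n$ produces, on the Levi quotient of $\mathbf{Q}$, a new sequence of homogeneous measures $\mu_{H'_n,g'_n}$ attached to subgroups $\mathbf{H}'_n$ of type $\mathcal{H}$. Applying Mozes--Shah inside this Levi, together with a non-divergence criterion refined to be sensitive to the stratification of $\overline{\Gamma\backslash X}^S_{\rm max}$ (in the spirit of Dani--Margulis and Eskin--Mozes--Shah), yields a homogeneous limit on the chosen boundary component.

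The main obstacle will be the stratum-sensitive non-divergence estimate. One needs not only to prevent total loss of mass, so that $\mu$ is genuinely a probability measure, but also to show that the sequence does not leak simultaneously across several boundary strata, and this must hold \emph{uniformly} in the varying subgroups $\mathbf{H}_n$. This uniformity in $\mathbf{H}_n$, rather than for a single fixed $\mathbf{H}$ as in the classical setting, is the new difficulty compared with Eskin--Mozes--Shah, and it is presumably what the tools developed later in the paper are designed to supply.
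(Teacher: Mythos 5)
You should first note that the statement you were asked to prove is, in the paper, a \emph{conjecture}: it is restated more precisely as Conjecture \ref{mainconj}, and the paper does \emph{not} prove it in general. It establishes it only in special cases: $\QQ$-rank $0$ or $1$, $\mathbf{G}=\SL_3$ with $\Gamma=\SL_3(\ZZ)$, $\mathbf{G}=\SL_2^r$ with $\Gamma=\SL_2(\ZZ)^r$, translates of the Levi of a maximal parabolic, translates of the unipotent radical of a minimal parabolic, and translates of subgroups of $\mathbf{M}_I$ by elements of $A_I$. There is therefore no proof of the full conjecture in the paper to compare your proposal against.

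Your sketch does correctly identify the paper's overall strategy: the dichotomy according to whether mass escapes in $\Gamma\backslash G$; reduction theory and the rational Langlands decomposition to locate a parabolic $\mathbf{P}$ responsible for the escape; and an application of Mozes--Shah/Ratner on the stratum where the measures ultimately converge. You also correctly name the central obstacle, namely a stratum-sensitive non-divergence estimate that is \emph{uniform} in the varying $\mathbf{H}_n$; Theorems \ref{criterionprop} and \ref{convergencecriterion} are precisely the tools the paper develops for this.

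However, there is a genuine and unacknowledged gap where you assert that, after fixing $\mathbf{P}$, the $A$-components $a_n$ ``converge in the natural compactification of the closed Weyl chamber of $\mathbf{P}$ to a point whose stabilizer is a well-defined $\QQ$-parabolic $\mathbf{Q}\supseteq\mathbf{P}$.'' This presupposes that the root values $\alpha(a_n)$ separate cleanly into those tending to $\infty$ and those staying bounded. But once you descend to a smaller parabolic, the $A$-component of the \emph{new} Langlands decomposition can have root values that tend to $0$: Lemma \ref{warning} shows that the restriction of a simple root outside $I$ to $\mathbf{A}^I$ has non-positive coefficients, so the direction of escape can shift as you iterate. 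The entire case analysis of Section \ref{s8} (cases 2.2.2.2.* in the proof of Theorem \ref{teo8.1}) exists precisely because a single pass through reduction theory does not determine the boundary component; at each level one must recombine components and sometimes conjugate by Weyl group elements before a usable direction emerges. Moreover, your proposed step of ``conjugating by $a_nm_n$'' to obtain a new sequence of homogeneous measures $\mu_{H_n',g_n'}$ on the Levi is not correct as stated: conjugating an algebraic group $\mathbf{H}_n$ by a real, generically non-rational element $a_n$ destroys its $\QQ$-structure and so does not produce a homogeneous (arithmetic) measure. The correct formulation, realised in Theorem \ref{convergencecriterion}, is to keep $\mathbf{H}_n$ inside $\mathbf{P}$, replace $g_n$ by its $P$-component $p_n$, and demand that (a) the measures on $\Gamma_P\backslash P$ associated with $(\mathbf{H}_n,p_n)$ converge and (b) $\alpha(a_n)\to\infty$ for \emph{all} $\alpha\in\Phi(P,A_{\mathbf{P}})$. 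In sum, your proposal is a reasonable high-level roadmap consistent with the paper's philosophy, but it is not a proof: it leaves open both the difficulty you flagged (uniform non-divergence) and the one you did not (controlling the direction of the $a_n$ across all simple roots simultaneously), and the second is exactly what makes the general conjecture open.
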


The main purpose of this work is to discuss Conjecture \ref{introconj} and to establish it under additional restrictions. The novelty here is that we do not assume that ${\bf H}_n$ is not contained in a proper rational parabolic of $\bf G$, and we therefore need to study the behavior of sequences of homogeneous measures when the mass escapes at infinity.

The original motivation for this work concerned the case when $S$ is a hermitian locally symmetric space. Then the Baily-Borel compactification of $S$ realises $S$ as a quasi-projective algebraic variety and $S$ has an interpretation as a moduli space for interesting structures (often, abelian varieties with level structures and endomorphisms). The boundary components of the Baily-Borel compactification
of $S$ are themselves hermitian locally symmetric spaces. In this situation, the Andr\'e-Oort and Zilber-Pink conjectures
predict strong restrictions on the distribution of special (or weakly special) subvarieties of $S$, which are homogeneous subvarieties of $S$ also possessing hermitian locally symmetric structures. Several results on the equidistribution 
of sequences of measures associated with special subvarieties were obtained by Clozel and the third author \cite{CU:strong}, \cite{ullmo:NF}, and these played a central role in the first proof of the Andr\'e-Oort Conjecture under the generalised Riemann hypothesis by Klingler, Yafaev and the third author \cite{ky:andre-oort}, \cite{uy:andre-oort}. This paper deals principally with the convergence of measures on general locally symmetric spaces and their Satake compactifications, but we hope to discuss the Baily-Borel compactification, and possible applications, in a future work.

\subsection{Overview of the results}

Section \ref{s2} is mainly preliminary. We recall relevant results on root systems, parabolic subgroups and ergodic theory on homogeneous spaces. We make repeated use of the rational Langlands decomposition of $G$ associated with 
a parabolic subgroup ${\bf P}$ of ${\bf G}$ defined over $\QQ$, which is described in Section \ref{ratLang}. We recall here that this decomposition is of the form
\begin{align*}
G=N_{\bf P}A_{\bf P}M_{\bf P}K,
\end{align*}
where $N_{\bf P}$ is the unipotent radical of $P:={\bf P}(\RR)^+$, $A_{\bf P}$ is the identity component of a real algebraic split torus, $A_{\bf P} M_{\bf P}$ is a Levi subgroup of $P$ and $K$ a maximal compact subgroup of $G$.

In Section \ref{s3}, we recall definitions and  properties of the maximal Satake compactification of a locally symmetric space $S$, and
of the Baily-Borel compactification when $S$ is hermitian. We then formulate our main conjectures on general sequences of homogeneous measures for the maximal Satake (Conjecture \ref{mainconj}) and the Baily-Borel (Conjecture \ref{bbconj}) compactifications. Theorem
 \ref{maintobb} shows that the conjecture for the
Satake compactification implies the one for the Baily-Borel compactification.

In Sections \ref{s4} and \ref{s5}, we prove two convergence criteria for sequences 
$(\mu_n)_{n\in \NN}= (\mu_{H_n, g_n})_{n\in \NN}$ of homogeneous measures. Theorem \ref{criterionprop} gives a sufficient condition, in terms of $(\phi,\chi)$-functions, as introduced by Borel \cite{borel:arithmetic-groups}, Section 14, under which $\{\mu_n\}_{n\in\NN}$ is sequentially compact. Theorem \ref{convergencecriterion} gives a sufficient condition under which $\mu_n$ converges
to a homogeneous measure $\mu$ on a boundary component $\Gamma_P\backslash X_P$ of the maximal Satake compactification of $\Gamma\backslash X$. These two results are crucial in the rest of the paper and are the main tools at our disposal. In order to use these criteria, it is necessary to
\begin{itemize}
\item understand the set of rational parabolic subgroups of ${\bf G}$ containing ${\bf H}_n$;
\item for each parabolic subgroup ${\bf P}$ containing ${\bf H}_n$, compute the rational Langlands decomposition
\begin{align*}
g_n=u_{n}a_{n}m_{n}k_{n}\in G=N_{\bf P}A_{\bf P}M_{\bf P}K;
\end{align*}
 \item for any $\alpha$ in a set of simple roots for the action of $A_{\bf P}$ on $N_{\bf P}$, understand the behavior of 
 $\alpha(a_{n})$ as $n\rightarrow\infty$ (positivity, boundedness, convergence to $\infty$).
\end{itemize}

In Sections \ref{s6} and \ref{s7}, we prove Conjecture \ref{mainconj} in full generality when the $\QQ$-rank of ${\bf G}$ is $0$ or $1$. Then, for any ${\bf G}$, Theorem \ref{levi} establishes Conjecture \ref{mainconj} when, for all $n\in\NN$, ${\bf H}={\bf H}_n$ is the semisimple non-compact part of a Levi subgroup of a maximal parabolic subgroup over $\QQ$. 

Theorem \ref{teo8.1}, which is one of the main results of the paper, establishes Conjecture \ref{mainconj} in the case when ${\bf G}={\bf \SL}_3$ and $\Gamma=\SL(3,\ZZ)$. The complexity of the general problem can already be seen from the various cases we have to face in this situation. Theorem \ref{teo9.1} establishes Conjecture \ref{mainconj} when, for $r\in\NN$, ${\bf G}={\bf \SL}_2^r$ and $\Gamma=\SL(2,\ZZ)^r$. This, of course, is an instance when $S$ is hermitian.

Theorem \ref{unip} establishes Conjecture \ref{mainconj} when, for each $n\in \NN$, ${\bf H}_n$
is equal to the unipotent radical $\bf N$ of a minimal parabolic subgroup ${\bf P}_0$ of ${\bf G}$. In this case, a weak limit $\mu$ of a sequence $(\mu_{N,g_n})_{n\in\NN}$
can be a homogeneous measure supported on any boundary component $\Gamma_P\backslash X_P$. The proof is constructive and explains, in terms of the rational Langlands decompositions of the $g_n$ relative to ${\bf P}_0$, on which boundary component $\mu$ is supported.

In Section \ref{s11}, we recall some basic properties of the Tits building $\mathcal{B}$ of ${\bf G}$. In particular, we discuss the notion of a {\it Levi sphere}, as introduced by Serre \cite{serre:reduct}, which is a sub-simplicial complex  $\mathcal{S}$ of 
$\mathcal{B}$ contained in an apartment of $\mathcal{B}$. The simplexes of $\mathcal{S}$ parametrise the rational parabolic subgroups of 
${\bf G}$ containing a fixed Levi subgroup of some parabolic subgroup. This notion is used in Section \ref{s13}
to study the conjecture when we translate subgroups $H_n$ of $M_{\bf P}$, for some parabolic subgroup ${\bf P}$, by elements $a_n\in A_{\bf P}$. We show that $\mathcal{S}$
can be described as the unit sphere in the Lie algebra $\mathfrak{a}_{\bf P}$ of $A_{\bf P}$. We then find a simplex 
in $\mathcal{S}$ corresponding to a parabolic subgroup ${\bf Q}$ such that $Q=N_{\bf Q} A_{\bf P}M_{\bf P}$ and the roots of $A_{\bf P}$
in $N_{\bf Q}$ take positive values on $\exp^{-1}(a_n)\in \mathfrak{a}_P$. This allows us to apply Theorem \ref{convergencecriterion}.

\subsection*{Acknowledgements}

The first author would like to thank the EPSRC and the Institut des Hautes Etudes Scientifiques (IHES) for granting him a William Hodge fellowship, during which he first began working on this topic. He would like to thank the EPSRC again, as well as Jonathan Pila, for the opportunity to be part of their project Model theory, functional transcendence, and diophantine geometry (EPSRC reference EP/N008359/1). He would like to thank the University of Reading for its ongoing support.

The first and second authors would both like to thank the IHES for an enjoyable research visit in August 2017.

\section{Preliminaries}\label{s2}

\subsection{Borel probability measures}\label{s2.1}
Let $S$ be a metric space and let $\Sigma$ be its Borel $\sigma$-algebra. By a {\it Borel probability measure on $S$}, we mean a Borel probability measure on $\Sigma$. We let $\mathcal{P}(S)$ denote the space of all Borel probability measures on $S$. We say that a sequence $(\mu_n)_{n\in\NN}$ in $\mathcal{P}(S)$ {\it converges (weakly)} to $\mu\in\mathcal{P}(S)$ if we have
\begin{align*}
\int_{S}f\ d\mu_n\rightarrow\int_{S}f\ d\mu,\text{ as }n\rightarrow\infty,
\end{align*}
for all bounded continuous functions $f$ on $S$.

\subsection{Algebraic groups}\label{s2.2}
By an {\it algebraic group}, we refer to a linear algebraic group defined over $\QQ$ and by an {\it algebraic subgroup of $\bf G$} we again refer to an algebraic subgroup defined over $\QQ$. We will use boldface letters to denote algebraic groups (which, again, are always defined over $\QQ$). If $\bf G$ is an algebraic group, we will denote its radical by $\bf R_G$ and its unipotent radical by ${\bf N}_{\bf G}$. We will write ${\bf G}^\circ$ for the (Zariski) connected component of $\bf G$ containing the identity. We will denote the Lie algebra of $\bf G$ by the corresponding mathfrak letter $\mathfrak{g}$, and we will denote the (topological) connected component of ${\bf G}(\RR)$ containing the identity by the corresponding Roman letter $G$. We will denote by $G_\QQ$ the intersection ${\bf G}(\QQ)\cap G$. We will retain any subscripts or superscripts in these notations. If $\bf M$ and $\bf A$ are algebraic subgroups of $\bf G$, we will write ${\bf Z_M}({\bf A})$ for the centraliser of $\bf A$ in $\bf M$ and ${\bf N_M(A)}$ for the normaliser of $\bf A$ in $\bf M$.

\subsection{Parabolic subgroups}\label{s2.3}
 A {\it parabolic subgroup} $\bf P$ of a connected algebraic group $\bf G$ is an algebraic subgroup such that the quotient of ${\bf G}$ by $\bf P$ is a projective algebraic variety. In particular, $\bf G$ is a parabolic subgroup of itself. However, by a {\it maximal} parabolic subgroup, we refer to a maximal {\it proper} parabolic subgroup. Note that $\bf R_G$ is contained in every parabolic subgroup of $\bf G$.

\begin{lem}[Cf. \cite{bt:groupes}, Proposition 4.4]\label{int}
Let ${\bf P}$ be a parabolic subgroups of $\bf G$. Then ${\bf N_G}({\bf P})={\bf P}$.

If ${\bf Q}$ is a another parabolic subgroup of $\bf G$, then $({\bf P}\cap{\bf Q}){\bf N_P}$ is a parabolic subgroup of $\bf G$, which is equal to $\bf P$ if and only if $\bf Q$ contains a Levi subgroup of $\bf P$.
\end{lem}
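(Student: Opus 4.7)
\textbf{Plan for the proof of Lemma \ref{int}.} For the first assertion, the plan is to exploit the projectivity of ${\bf G}/{\bf P}$. Setting ${\bf N} := {\bf N}_{\bf G}({\bf P})$, the subgroup ${\bf P}$ is normal in ${\bf N}$ and the quotient ${\bf N}/{\bf P}$ is an affine algebraic group. The morphism ${\bf N}/{\bf P} \to {\bf G}/{\bf P}$ realises ${\bf N}/{\bf P}$ as a locally closed subvariety of a projective variety. Since an affine algebraic variety embedded as a locally closed subvariety in a projective variety must be zero-dimensional, ${\bf N}/{\bf P}$ is finite; combined with the connectedness of parabolic subgroups of a connected group, this forces ${\bf N} = {\bf P}$.

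For the second assertion, the key structural input is that ${\bf P} \cap {\bf Q}$ contains a maximal torus ${\bf T}$ of ${\bf G}$. Passing to the reductive quotient by ${\bf N}_{\bf G}$ (which lies in every parabolic), let $\Phi$ be the root system of ${\bf T}$ in ${\bf G}$, let ${\bf L}$ be the Levi subgroup of ${\bf P}$ containing ${\bf T}$, and let $\Phi_{\bf P}, \Phi_{\bf Q}, \Phi_{\bf L}, \Phi_{{\bf N}_{\bf P}}$ denote the corresponding sets of roots, so $\Phi_{\bf P} = \Phi_{\bf L} \sqcup \Phi_{{\bf N}_{\bf P}}$. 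Because ${\bf T}$ lies in ${\bf P} \cap {\bf Q}$, the intersection is generated by ${\bf T}$ together with the root subgroups ${\bf U}_\alpha$ for $\alpha \in \Phi_{\bf P} \cap \Phi_{\bf Q}$, and hence $({\bf P} \cap {\bf Q}){\bf N}_{\bf P}$ is generated by ${\bf T}$ and the root subgroups indexed by $\Psi := (\Phi_{\bf P} \cap \Phi_{\bf Q}) \cup \Phi_{{\bf N}_{\bf P}}$. I would then verify that $\Psi$ is a parabolic subset of $\Phi$: it is closed under addition (the intersection $\Phi_{\bf P} \cap \Phi_{\bf Q}$ is closed, and $\Phi_{{\bf N}_{\bf P}}$ is an ideal in $\Phi_{\bf P}$), and $\Psi \cup -\Psi = \Phi$, since $\Phi_{{\bf N}_{\bf P}} \cup -\Phi_{{\bf N}_{\bf P}} = \Phi \setminus \Phi_{\bf L}$ while every $\alpha \in \Phi_{\bf L} \subseteq \Phi_{\bf P}$ satisfies either $\alpha \in \Phi_{\bf Q}$ or $-\alpha \in \Phi_{\bf Q}$ by parabolicity of $\Phi_{\bf Q}$.

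For the equivalence, the $(\Leftarrow)$ direction is immediate: any Levi ${\bf L}'$ of ${\bf P}$ contained in ${\bf Q}$ lies in ${\bf P} \cap {\bf Q}$ and satisfies ${\bf L}'{\bf N}_{\bf P} = {\bf P}$, while the reverse containment is obvious. For $(\Rightarrow)$, the equation $({\bf P} \cap {\bf Q}){\bf N}_{\bf P} = {\bf P}$ translates into $\Psi = \Phi_{\bf P}$, and comparing this with the disjoint decomposition $\Phi_{\bf P} = \Phi_{\bf L} \sqcup \Phi_{{\bf N}_{\bf P}}$ forces $\Phi_{\bf L} \subseteq \Phi_{\bf P} \cap \Phi_{\bf Q}$. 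Together with ${\bf T} \subseteq {\bf Q}$ this yields ${\bf L} \subseteq {\bf Q}$, exhibiting the required Levi of ${\bf P}$ inside ${\bf Q}$.

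The main obstacle is the structural input that ${\bf P} \cap {\bf Q}$ contains a common maximal torus of ${\bf G}$; this is the nontrivial fact from the theory of reductive groups that reduces the problem to combinatorics of root subsets. Once that is in hand, the remaining verifications amount to a bookkeeping exercise with root subgroups, and one must be careful to work in the reductive quotient so that the root-theoretic description of ${\bf P}$, ${\bf Q}$, ${\bf N}_{\bf P}$ and ${\bf L}$ is available.
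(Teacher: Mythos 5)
The paper itself gives no proof of Lemma~\ref{int}; it is stated as a citation to Borel--Tits, so there is no in-paper argument to compare your proposal against. Your sketch is, however, essentially the standard proof from the theory of reductive groups, and the second part (the root-theoretic analysis of $({\bf P}\cap{\bf Q}){\bf N}_{\bf P}$) is sound modulo the structural inputs you flag, namely that ${\bf P}\cap{\bf Q}$ contains a common maximal torus, is connected, and is generated by ${\bf T}$ together with the root subgroups it contains. Be aware, though, that those inputs are themselves part of what Borel--Tits, Proposition~4.4 establishes, so unless you source them elsewhere the argument is partially circular.

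There is a genuine error in your proof of the first assertion. You assert that an affine variety embedded as a \emph{locally closed} subvariety of a projective variety must be zero-dimensional; this is false, as $\AAA^1\subset\PP^1$ already shows. The true statement is that a \emph{complete} affine variety is finite, so what you actually need is that ${\bf N}/{\bf P}$ is \emph{closed} in ${\bf G}/{\bf P}$, hence projective. This does hold here: writing ${\bf N}:={\bf N}_{\bf G}({\bf P})$ and $\pi:{\bf G}\to{\bf G}/{\bf P}$, the set ${\bf N}$ is a union of ${\bf P}$-cosets, so $\pi^{-1}(\pi({\bf N}))={\bf N}$ is closed; since $\pi$ is open, $\pi({\bf N})={\bf N}/{\bf P}$ is closed. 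With that correction the finiteness of ${\bf N}/{\bf P}$ follows. A second gap: finiteness only gives ${\bf N}^\circ={\bf P}$, and you still must show ${\bf N}$ is connected. The usual way is to reduce to the self-normalizing property of Borel subgroups: given $x\in{\bf N}$, both ${\bf B}$ and $x{\bf B}x^{-1}$ are Borel subgroups of ${\bf P}$, hence ${\bf P}$-conjugate, and then $px\in{\bf N}_{\bf G}({\bf B})={\bf B}$ for suitable $p\in{\bf P}$, whence $x\in{\bf P}$. As written, your argument for the first assertion does not close either of these gaps.
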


\begin{cor}\label{norm}
Let $\bf P$ be a parabolic subgroup of $\bf G$. Then ${\bf N_G}({\bf N_P})={\bf P}$.
\end{cor}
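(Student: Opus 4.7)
The inclusion ${\bf P}\subseteq {\bf N_G}({\bf N_P})$ is immediate: the unipotent radical ${\bf N_P}$ is a characteristic (in particular, normal) subgroup of ${\bf P}$, so every element of ${\bf P}$ normalises ${\bf N_P}$. The content of the corollary lies in the reverse inclusion.

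Set ${\bf Q}:={\bf N_G}({\bf N_P})$. Since ${\bf Q}\supseteq {\bf P}$ and any algebraic subgroup of ${\bf G}$ containing a parabolic subgroup is itself parabolic (the quotient ${\bf G}/{\bf Q}$ is a quotient of the projective variety ${\bf G}/{\bf P}$), the group ${\bf Q}$ is parabolic. My strategy is to show that ${\bf P}$ and ${\bf Q}$ share the same unipotent radical, and then to deduce ${\bf P}={\bf Q}$ from the root-theoretic classification of parabolic subgroups. The equality ${\bf N_P}={\bf N_Q}$ follows from a double inclusion: on one hand ${\bf N_P}$ is a unipotent subgroup of ${\bf Q}$ which is normal by the very definition of ${\bf Q}$, hence contained in the largest such subgroup, namely ${\bf N_Q}$; on the other hand, any nested pair of parabolic subgroups ${\bf P}\subseteq {\bf Q}$ yields the reverse inclusion ${\bf N_Q}\subseteq {\bf N_P}$.

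To justify the second of these inclusions and the final deduction ${\bf P}={\bf Q}$, I would pass, if necessary, to the reductive quotient ${\bf G}/{\bf R_G}$ (which is permissible since ${\bf R_G}$ lies inside every parabolic subgroup of ${\bf G}$), and fix a minimal $\QQ$-parabolic contained in ${\bf P}$ together with a maximal $\QQ$-split torus inside its Levi. Then ${\bf P}$ and ${\bf Q}$ are the standard parabolic subgroups ${\bf P}_I$ and ${\bf P}_J$ associated with subsets $I\subseteq J$ of the set $\Delta$ of simple $\QQ$-roots, and their unipotent radicals are generated by the root groups for $\Phi^+\setminus \Phi_I^+$ and $\Phi^+\setminus \Phi_J^+$ respectively. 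This immediately gives ${\bf N_Q}\subseteq {\bf N_P}$, and once combined with the opposite inclusion it forces $\Phi_I^+=\Phi_J^+$, hence $I=J$ (the simple roots of the root subsystem $\Phi_I$ are precisely the elements of $I$), and therefore ${\bf P}={\bf Q}$. The main obstacle is this last step of combinatorial rigidity identifying a parabolic subgroup from its unipotent radical; every other step is formal given Lemma \ref{int} and the standard structure theory of parabolic subgroups over $\QQ$.
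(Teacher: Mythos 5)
Your proof is correct but follows a genuinely different route from the paper. The paper's argument works pointwise: it takes a rational point $g$ of ${\bf N_G}({\bf N_P})$, forms the conjugate ${\bf Q}:=g{\bf P}g^{-1}$, observes that ${\bf N_P}\subseteq {\bf P}\cap{\bf Q}$ so that ${\bf P}\cap{\bf Q}=({\bf P}\cap{\bf Q}){\bf N_P}$ is parabolic by Lemma \ref{int}, deduces ${\bf Q}={\bf P}$ from the Borel--Tits theory of conjugate parabolics, and then concludes by Zariski density of $\QQ$-points. You instead treat ${\bf N_G}({\bf N_P})$ as a group in its own right: you note it is a parabolic containing ${\bf P}$ (image of a complete variety is complete), show it has the same unipotent radical as ${\bf P}$ by a double inclusion (the forward inclusion using that ${\bf N_P}$ is a connected normal unipotent subgroup of the normalizer, the reverse being the standard monotonicity of unipotent radicals for nested parabolics), and then invoke the root-theoretic classification of standard parabolics to conclude that a parabolic is determined by its unipotent radical. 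Both are sound; yours avoids the rationality/density step entirely and is arguably more structural, at the cost of appealing directly to the root-system description of parabolics rather than to Lemma \ref{int}, which the paper has already put in place. One cosmetic remark: ${\bf G}/{\bf R_G}$ is the semisimple (not merely reductive) quotient; the reductive quotient would be ${\bf G}/{\bf N_G}$, but either reduction is permissible here since the relative root system is unchanged.
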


\begin{proof}
Let $g\in{\bf N_G}({\bf N_P})(\QQ)$. Then ${\bf Q}:=g{\bf P}g^{-1}$ is a parabolic subgroup of ${\bf G}$ containing ${\bf N_P}$. Therefore, ${\bf P}\cap{\bf Q}$ contains the parabolic subgroup $({\bf P}\cap{\bf Q}){\bf N_P}$ of $\bf G$, and is, therefore, a parabolic subgroup of ${\bf G}$ itself. By \cite{bt:groupes}, Section 4.3, we have ${\bf Q}={\bf P}$ and so $g\in{\bf P}(\QQ)$. The result follows from the fact that ${\bf N_G}({\bf N_P})(\QQ)$ is Zariski dense in ${\bf N_G}({\bf N_P})$.
\end{proof}

\subsection{Cartan involutions}\label{s2.4}
Let $\bf G$ be a reductive algebraic group and let $K$ be a maximal compact subgroup of $G$. Then there exists a unique involution $\theta$ on $G$ such that $K$ is the fixed point set of $\theta$. We refer to $\theta$ as the {\it Cartan involution of $G$ associated with $K$}.

\subsection{Boundary symmetric spaces}\label{s2.5}
Let $\bf G$ be a semisimple algebraic group and let $K$ be a maximal compact subgroup of $G$. Let $\bf P$ be a parabolic subgroup of $\bf G$. As in \cite{BJ:compactifications}, (I.1.10), we have the {\it real Langlands decomposition (with respect to $K$)}
\begin{align*}
P=N_PM_PA_P,
\end{align*}
where $L_P:=M_PA_P$ is the unique Levi subgroup of $P$ such that $K_P:=L_P\cap K=M_P\cap K$ is a maximal compact subgroup of $L_P$, and $A_P$ is the maximal split torus in the centre of $L_P$. We denote by $X_P$ the {\it boundary symmetric space} $M_P/K_P$, on which $P$ acts through its projection on to $M_P$.

\subsection{Rational Langlands decomposition}\label{ratLang}

Let ${\bf G}$ be a connected algebraic group and let $K$ be a maximal compact subgroup of $G$. Let $\bf P$ be a parabolic subgroup of $\bf G$. As in \cite{BJ:compactifications}, (III.1.3), we have the {\it rational Langlands decomposition (with respect to $K$)}
\begin{align*}
P=N_{\bf P}M_{\bf P}A_{\bf P}.
\end{align*}
We let $\Phi(P,A_{\bf P})$ denote the set of characters of $A_{\bf P}$ occuring in its action on $\mathfrak{n}_P$. Since $G=PK$, the rational Langlands decomposition of $P$ yields
\begin{align*}
G=N_{\bf P}M_{\bf P}A_{\bf P}K.
\end{align*}
In particular, if $g\in G$, we can write $g$ as 
\begin{align*}
g=nmak\in N_{\bf P}M_{\bf P}A_{\bf P}K,
\end{align*}
and we denote the (uniquely determined) $A_{\bf P}$-component $a_{{\bf P},K}(g)$.

We remind the reader that the groups $A_{\bf P}$ and $M_{\bf P}$ are not necessarily associated with algebraic groups defined over $\QQ$. However, by \cite{BJ:compactifications}, Proposition III.1.11, there exists an $n\in N_{\bf P}$ such that $nM_{\bf P}n^{-1}$ and $nA_{\bf P}n^{-1}$ are associated with algebraic groups defined over $\QQ$. In particular, the product $N_{\bf P}M_{\bf P}$ is associated with a connected algebraic group over $\QQ$, which we always denote $\bf H_P$. Clearly, ${\bf H_P}$ is a group with no rational characters. Note that the rational Langlands decomposition with respect to $nKn^{-1}$ is
\begin{align*}
P=N_{\bf P}\cdot nM_{\bf P}n^{-1}\cdot nA_{\bf P}n^{-1},
\end{align*}
from which it follows that $\bf H_P$ depends only on $\bf P$.

\subsection{Standard parabolic subgroups}\label{standard}

Let $\bf G$ be a reductive algebraic group and let $\bf A$ be a maximal split subtorus of $\bf G$. The non-trivial characters of ${\bf A}$ that intervene in the adjoint representation of ${\bf G}$ restricted to ${\bf A}$ are known as the {\it $\QQ$-roots of ${\bf G}$ with respect to ${\bf A}$}.

Let ${\bf P}_0$ be a minimal parabolic subgroup of $\bf G$ containing $\bf A$. We let $\Phi({\bf P}_0,{\bf A})$ denote the set of characters of $\bf A$ occurring in its action on $\mathfrak{n}$, where ${\bf N}:={\bf N}_{{\bf P}_0}$. As explained in \cite{BJ:compactifications}, III.1.7, $\Phi({\bf P}_0,{\bf A})$ contains a unique subset $\Delta:=\Delta({\bf P}_0,{\bf A})$ such that every element of $\Phi({\bf P}_0,{\bf A})$ is a linear combination, with non-negative integer coefficients, of elements belonging to $\Delta$. On the other hand, ${\bf P}_0$ is determined by $\bf A$ and $\Delta$. We refer to $\Delta$ as a set of {\it simple $\QQ$-roots of $\bf G$ with respect to $\bf A$}.

For a subset $I\subseteq\Delta$, we define the subtorus
\begin{align*}
{\bf A}_I:=(\cap_{\alpha\in I}\ker\alpha)^{\circ}
\end{align*} 
of $\bf A$. Then the subgroup ${\bf P}_I$ of $\bf G$ generated by ${\bf Z}_{\bf G}({\bf A}_I)$ and $\bf N$ is a parabolic subgroup of $\bf G$. We refer to ${\bf P}_I$ as a {\it standard parabolic subgroup} of $\bf G$. Every parabolic subgroup of $\bf G$ containing ${\bf P}_0$ is equal to ${\bf P}_I$ for some uniquely determined subset $I\subseteq\Delta$. For ease of notation, when $I=\Delta\setminus\{\alpha\}$, for some $\alpha\in\Delta$, we will write ${\bf P}_\alpha$ instead of ${\bf P}_{\Delta\setminus\{\alpha\}}$. We will use the following lemma in Section \ref{s10}.

\begin{lem}\label{mustbestandard}
Let $\bf P$ be a parabolic subgroup of $\bf G$ containing $\bf N$. Then ${\bf P}$ contains ${\bf P}_0$. That is, ${\bf P}$ is a standard parabolic subgroup of $\bf G$.
\end{lem}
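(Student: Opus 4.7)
The plan is to apply the two parts of Lemma \ref{int} with the roles of ${\bf P}$ and ${\bf Q}$ taken by ${\bf P}_0$ and ${\bf P}$ respectively, exploiting the minimality of ${\bf P}_0$.

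First, since ${\bf N} = {\bf N}_{{\bf P}_0}$ lies in both ${\bf P}_0$ and ${\bf P}$, it certainly lies in ${\bf P}_0 \cap {\bf P}$. By Lemma \ref{int} the product $({\bf P}_0 \cap {\bf P}){\bf N}_{{\bf P}_0}  = ({\bf P}_0 \cap {\bf P}){\bf N}$ is a parabolic subgroup of ${\bf G}$. This parabolic is contained in ${\bf P}_0$, so by minimality of ${\bf P}_0$ we must have
\begin{equation*}
({\bf P}_0 \cap {\bf P}){\bf N} = {\bf P}_0.
\end{equation*}

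Now apply the second assertion of Lemma \ref{int} in the converse direction: the equality just displayed tells us that ${\bf P}$ contains a Levi subgroup ${\bf L}$ of ${\bf P}_0$. Since ${\bf P}_0$ decomposes as ${\bf P}_0 = {\bf L} \cdot {\bf N}$, and ${\bf P}$ contains both ${\bf L}$ and ${\bf N}$ by hypothesis, we conclude that ${\bf P} \supseteq {\bf P}_0$, as required. Finally, every parabolic containing ${\bf P}_0$ is standard by the classification recalled in Section \ref{standard}, so ${\bf P} = {\bf P}_I$ for a uniquely determined $I \subseteq \Delta$.

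There is no serious obstacle here; the only subtle point is remembering that Lemma \ref{int} is an "if and only if" and that its equality criterion can be run backwards once minimality of ${\bf P}_0$ forces the parabolic $({\bf P}_0 \cap {\bf P}){\bf N}$ to coincide with ${\bf P}_0$.
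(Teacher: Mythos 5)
Your proof is correct and follows essentially the same route as the paper: apply Lemma~\ref{int} with roles ${\bf P}\leftarrow{\bf P}_0$, ${\bf Q}\leftarrow{\bf P}$, and use minimality of ${\bf P}_0$ to force $({\bf P}_0\cap{\bf P}){\bf N}={\bf P}_0$. The only (immaterial) difference is at the end: the paper observes directly that $({\bf P}_0\cap{\bf P}){\bf N}={\bf P}_0\cap{\bf P}$ since ${\bf N}\subseteq{\bf P}_0\cap{\bf P}$, giving ${\bf P}_0\cap{\bf P}={\bf P}_0$ at once, whereas you take the slightly longer detour through the ``if and only if'' clause of Lemma~\ref{int} to extract a Levi subgroup of ${\bf P}_0$ inside ${\bf P}$ and then recombine with ${\bf N}$.
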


\begin{proof}
By assumption $\bf N$ is contained in ${\bf P}\cap{\bf P}_0$, and, hence, ${\bf Q}:=({\bf P}\cap{\bf P}_0){\bf N}$, which, by Lemma \ref{int}, is a parabolic subgroup of $\bf G$. However, ${\bf Q}$ is contained in ${\bf P}_0$, which is minimal. Hence, ${\bf Q}={\bf P}_0$, which implies that ${\bf P}\cap{\bf P}_0={\bf P}_0$ and we conclude that $\bf P$ contains ${\bf P}_0$.
\end{proof}

Let $K$ be a maximal compact subgroup of $G$ such that $A$ is invariant under the Cartan involution of $G$ associated with $K$. Then, as in \cite{DM:uniflows}, Section 1, ${\bf Z}_{\bf G}({\bf A}_I)$ is the Levi subgroup of $\bf P$ appearing in the rational Langlands decomposition of $P$ with respect to $K$. Note that ${\bf A}_I$ is the maximal split subtorus of the centre of ${\bf Z_G(A}_I)$ and we can write ${\bf Z_G(A}_I)$ as an almost direct product ${\bf M}_I{\bf A}_I$, where ${\bf M}_I$ is a reductive group with no rational characters. The rational Langlands decomposition is then
\begin{align*}
P_I=N_IM_IA_I,
\end{align*}
where ${\bf N}_I:={\bf N}_{{\bf P}_I}$. We will also write ${\bf H}_I:={\bf N}_I{\bf M}_I$. For ease of notation, when $I=\Delta\setminus\{\alpha\}$, for some $\alpha\in\Delta$, we will write ${\bf A}_\alpha$, ${\bf M}_\alpha$, ${\bf N}_\alpha$ and ${\bf H}_\alpha$ instead of ${\bf A}_I$, ${\bf M}_I$, ${\bf N}_I$ and ${\bf H}_I$, respectively.

\subsection{Root systems}\label{weyl}

Consider the situation described in Section \ref{standard}. Let $X^*({\bf A})$ denote the character module of $\bf A$, and let $X^*({\bf A})_{\QQ}$ denote the $\QQ$-vector space $X^*({\bf A})\otimes_{\ZZ}\QQ$. (We will also later refer to the cocharacter module $X_*({\bf A})$ of $\bf A$.) Fix a non-degenerate scalar product $(\cdot,\cdot)$ on $X^*({\bf A})_{\QQ}$ that is invariant under the action of ${\bf N}_{\bf G}({\bf A})(\QQ)$. Then the $\QQ$-roots of ${\bf G}$ with respect to ${\bf A}$  equipped with the inner product $(\cdot,\cdot)$ constitute a {\it root system} in $X^*({\bf A})_{\QQ}$. We refer the reader to \cite{springer:redgroups}, Section 3.5 for further details.

Let $\Delta$ denote a set of simple $\QQ$-roots of $\bf G$ with respect to $\bf A$ and fix a subset $I\subseteq\Delta$. Then ${\bf A}^I:=({\bf A}\cap{\bf M}_I)^{\circ}$ is a maximal split subtorus of ${\bf M}_I$ and the elements of $I$ restrict to a set of simple $\QQ$-roots of ${\bf M}_I$ with respect to ${\bf A}^I$. Furthermore, ${\bf A}$ is equal to the almost direct product ${\bf A}^I{\bf A}_I$, from which it follows that ${\bf N}_{{\bf M}_I}({\bf A}^I)$ is a subgroup of ${\bf N}_{\bf G}({\bf A})$. For ease of notation, when $I=\Delta\setminus\{\alpha\}$, for some $\alpha\in\Delta$, we will write ${\bf A}^\alpha$ instead of ${\bf A}^I$.

The isogeny ${\bf A}^{I}\times{\bf A}_{I}\rightarrow{\bf A}:(a,b)\mapsto ab$ yields an identification 
\begin{align}\label{directprod1}
X^*({\bf A})_\QQ=X^*({\bf A}^{I})_\QQ\oplus X^*({\bf A}_{I})_\QQ
\end{align}
such that the projection $\pi_1$ (respectively, $\pi_2$) on to the first (respectively, second) factor is given by restricting to the corresponding subtorus. It follows that the restriction of $(\cdot,\cdot)$ to $X^*({\bf A}^{I})_\QQ$ is a non-degenerate scalar product that is invariant with respect to the action of ${\bf N}_{{\bf M}_{I}}({\bf A}^{I})(\QQ)$.

\begin{lem}\label{orth}
The decomposition (\ref{directprod1}) is orthogonal with respect to $(\cdot,\cdot)$. 
\end{lem}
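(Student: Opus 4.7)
\emph{Proof plan.} The strategy is to exploit the Weyl group $W_I := {\bf N}_{{\bf M}_I}({\bf A}^I)(\QQ)/{\bf Z}_{{\bf M}_I}({\bf A}^I)(\QQ)$ of $({\bf M}_I, {\bf A}^I)$ together with the invariance of $(\cdot, \cdot)$. By the inclusion ${\bf N}_{{\bf M}_I}({\bf A}^I) \subseteq {\bf N}_{\bf G}({\bf A})$ recalled in the excerpt, $W_I$ acts on $X^*({\bf A})_\QQ$ by orthogonal transformations. For each $\alpha \in I$ I would single out the simple reflection $s_\alpha \in W_I$ and exploit two of its properties.

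First, $s_\alpha$ fixes $V_2 := X^*({\bf A}_I)_\QQ$ pointwise: any lift $\tilde s_\alpha \in {\bf N}_{{\bf M}_I}({\bf A}^I)(\QQ)$ lies in ${\bf M}_I \subseteq {\bf Z}_{\bf G}({\bf A}_I)$ (owing to the decomposition ${\bf Z}_{\bf G}({\bf A}_I) = {\bf M}_I {\bf A}_I$ from Section \ref{standard}), so it centralises ${\bf A}_I$ and induces the identity on $X^*({\bf A}_I)$. Second, $\alpha$ itself belongs to $V_1 := X^*({\bf A}^I)_\QQ$, since $\alpha$ vanishes on ${\bf A}_I \subseteq \bigcap_{\beta \in I}\ker\beta$, and $s_\alpha \cdot \alpha = -\alpha$ by the defining property of a Weyl reflection attached to a root. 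Feeding these into the invariance of $(\cdot, \cdot)$, for any $v \in V_2$ I would compute
\begin{align*}
(\alpha, v) = (s_\alpha \cdot \alpha,\, s_\alpha \cdot v) = (-\alpha, v) = -(\alpha, v),
\end{align*}
forcing $(\alpha, v) = 0$ for every $\alpha \in I$ and every $v \in V_2$.

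To upgrade this to $V_1 \perp V_2$, I would finish by showing that $\{\alpha : \alpha \in I\}$ spans $V_1$. Using the non-degeneracy of the restriction of $(\cdot, \cdot)$ to $V_1$ noted in the excerpt together with the fact that $W_I$ is generated by the reflections $s_\alpha$, this spanning is equivalent to the vanishing of $X^*({\bf A}^I)_\QQ^{W_I}$. The latter is the crucial technical point: I would argue it by invoking the classical identification, over $\QQ$, of Weyl-invariant characters of a maximal $\QQ$-split torus with the $\QQ$-rational characters of the ambient reductive group, and then applying the standing hypothesis (built into the definition of ${\bf M}_I$ in Section \ref{standard}) that ${\bf M}_I$ carries no such characters. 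I expect this invariants-versus-characters identification, rather than the reflection computation, to be the main obstacle, and I would cite the structure theory of reductive groups (e.g.\ \cite{springer:redgroups}) for it.
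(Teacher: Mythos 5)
Your proof is correct and takes essentially the same approach as the paper: the core step is the identical reflection computation, pairing $\alpha\in I$ with an element $w\in{\bf N}_{{\bf M}_I}({\bf A}^I)(\QQ)$ satisfying $w(\alpha)=-\alpha$ and observing that $w$ fixes $X^*({\bf A}_I)_\QQ$ because $w\in{\bf M}_I\subseteq{\bf Z}_{\bf G}({\bf A}_I)$. The only divergence is that you single out the spanning of $X^*({\bf A}^I)_\QQ$ by the restrictions of $I$ as the main technical obstacle and justify it via triviality of $W_I$-invariant characters; the paper records this as a preliminary observation, and it also follows directly from the dimension count $\dim{\bf A}^I=\dim{\bf A}-\dim{\bf A}_I=|I|$ together with the linear independence of simple roots.
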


\begin{proof}
Note that the elements of $I$ restrict to a basis of $X^*({\bf A}^{I})_\QQ$ and are trivial on ${\bf A}_{I}$. Choose any $\beta\in X^*({\bf A}_I)_{\QQ}$, and any $\alpha\in I$, and let $w\in {\bf N}_{{\bf M}_I}({\bf A}^I)(\QQ)$ be an element such that $w(\alpha)=-\alpha$. Since $w\in {\bf M}_I$, we have $w(\beta)=\beta$. Therefore,
\begin{align*}
(\alpha,\beta)=(w(\alpha),w(\beta))=(-\alpha,\beta)=-(\alpha,\beta)=0,
\end{align*}
which proves the claim.
\end{proof}

\subsection{Quasi-fundamental weights}\label{s2.9}
Consider the situation described in Section \ref{weyl}.
A set of {\it quasi-fundamental weights} in $X^*({\bf A})_\QQ$ is a set of elements $\chi_{\alpha}$, one for each $\alpha\in\Delta$, such that
\begin{align*}
(\chi_\alpha,\beta)=d_\alpha\cdot\delta_{\alpha\beta}\text{ for all }\alpha,\beta\in\Delta,
\end{align*}
where $d_\alpha\in\QQ_{>0}$ for all $\alpha\in\Delta$. We use the prefix {\it quasi-} to emphasise that we place no (further) restrictions on the $d_\alpha$. 

\begin{lem}\label{nonneg}
Let $\{\chi_\alpha\}_{\alpha\in\Delta}$ denote a set of quasi-fundamental weights in $X^*({\bf A})_\QQ$. Then, as a linear combination of the $\alpha\in\Delta$, each $\chi_\alpha$ has positive coefficients.
\end{lem}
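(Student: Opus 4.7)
The plan is to interpret the defining relations $(\chi_\alpha,\beta)=d_\alpha\delta_{\alpha\beta}$ as a linear system expressed in the basis $\Delta$, and then to read off the sign of the solution from the sign pattern of the Gram matrix of the simple roots.

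First, since $\Delta$ is a $\QQ$-basis of its own span, I write $\chi_\alpha=\sum_{\beta\in\Delta} c_{\alpha\beta}\,\beta$ with $c_{\alpha\beta}\in\QQ$. Pairing with $\gamma\in\Delta$ converts the defining relations into the matrix equation $M\,c^{(\alpha)}=d_\alpha e_\alpha$, where $M:=\bigl((\beta,\gamma)\bigr)_{\beta,\gamma\in\Delta}$ is the Gram matrix of simple roots, $c^{(\alpha)}$ is the coefficient column and $e_\alpha$ is the standard basis vector indexed by $\alpha$. Because $d_\alpha>0$, the desired positivity will follow once I know that $M^{-1}$ has non-negative entries (and strictly positive entries inside each connected component of the Dynkin diagram of ${\bf G}$).

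Next, I would observe that $M$ is a Stieltjes matrix. It is symmetric and positive definite: the simple roots are linearly independent and, as is standard for root systems, the form $(\cdot,\cdot)$ restricts to a positive definite form on their $\QQ$-span. Its off-diagonal entries are non-positive, by the well-known fact that $(\alpha,\beta)\leq 0$ for distinct $\alpha,\beta\in\Delta$ --- otherwise one of $\alpha-\beta$ or $\beta-\alpha$ would be a root, contradicting the simplicity of $\alpha$ and $\beta$. A classical lemma, provable by induction via a Schur complement (or by direct appeal to M-matrix theory), then states that any symmetric positive definite matrix with non-positive off-diagonal entries has an entrywise non-negative inverse, whence $c_{\alpha\beta}\geq 0$ for all $\alpha,\beta\in\Delta$.

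Finally, to upgrade non-negativity to strict positivity I would decompose $\Delta$ along the connected components of the Dynkin diagram of ${\bf G}$, so that $M$ becomes block-diagonal. Within each block, $M$ is an \emph{irreducible} Stieltjes matrix, and a Perron--Frobenius-type refinement of the previous step shows that its inverse has strictly positive entries; between distinct blocks the coefficients $c_{\alpha\beta}$ vanish. The only non-formal step in this argument is the passage from the sign pattern of $M$ to that of $M^{-1}$; this is the main obstacle, but it is a standard fact in the theory of Stieltjes/M-matrices, while everything else amounts to bookkeeping with the defining equations.
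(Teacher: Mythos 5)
Your proof is correct, and at its core it is the same argument the paper outsources: both routes identify the coefficients of $\chi_\alpha$ in the basis $\Delta$ with the $\alpha$-column of the inverse Gram (equivalently, inverse Cartan, up to positive diagonal scaling) matrix, and then reduce the lemma to the positivity of that inverse. The paper simply cites this last fact; you prove it by recognising the Gram matrix as a symmetric positive definite matrix with non-positive off-diagonal entries (a Stieltjes matrix) and invoking the standard M-matrix result that such a matrix has an entrywise non-negative inverse, strengthened to strict positivity on each irreducible block. So you are unpacking the citation rather than taking a different road. One small thing your more careful treatment surfaces: if the Dynkin diagram of $\bf G$ is disconnected, the coefficients $c_{\alpha\beta}$ between components are exactly $0$, so the strictly \emph{positive} claim in the lemma as stated holds only within a $\QQ$-simple factor; the general statement is non-negativity (plus positivity within blocks). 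That is all the paper actually uses later (Lemma \ref{warning} needs only non-negativity, and the application in Theorem \ref{unip} even says ``non-negative''), so this is a clarification rather than a gap, but it is worth noticing.
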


\begin{proof}
The coefficients in question are, up to scaling, simply those of the inverse of the so-called {\it Cartan matrix}, which always has positive coefficients (see, for example, \cite{WZ:cartan}, Section 2.1).
\end{proof}

\begin{lem}\label{restr}
Let $\{\chi_\alpha\}_{\alpha\in\Delta}$ denote a set of quasi-fundamental weights in $X^*({\bf A})_\QQ$. Then the restrictions of the $\chi_\alpha$ for $\alpha\in I$ constitute a set of quasi-fundamental weights in $X^*({\bf A}^{I})_\QQ$ with respect to the restriction of $(\cdot,\cdot)$. 
\end{lem}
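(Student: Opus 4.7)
The plan is to exploit the orthogonal direct sum decomposition from Lemma \ref{orth} together with the observation that the simple roots in $I$ lie in the first factor $X^*({\bf A}^I)_\QQ$ of that decomposition. Once this is set up, the required identity for the restrictions of the $\chi_\alpha$ reduces mechanically to the identity defining the original quasi-fundamental weights.

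First I would note that the projection $\pi_1 : X^*({\bf A})_\QQ \to X^*({\bf A}^I)_\QQ$ in the decomposition (\ref{directprod1}) is precisely restriction of characters to ${\bf A}^I$. Therefore, for each $\alpha \in I$, the restriction of $\chi_\alpha$ to ${\bf A}^I$ is $\pi_1(\chi_\alpha)$. Next, since ${\bf A}_I = (\cap_{\alpha \in I}\ker \alpha)^\circ$, every element of $I$ restricts trivially to ${\bf A}_I$; in terms of the decomposition this says $\pi_2(\beta) = 0$ for $\beta \in I$, so $\beta = \pi_1(\beta)$ and $\beta$ already lies inside $X^*({\bf A}^I)_\QQ$.

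Now fix $\alpha, \beta \in I$. Writing $\chi_\alpha = \pi_1(\chi_\alpha) + \pi_2(\chi_\alpha)$ and using the orthogonality of the decomposition (Lemma \ref{orth}) together with $\beta \in X^*({\bf A}^I)_\QQ$, I compute
\begin{align*}
\bigl(\pi_1(\chi_\alpha)|_{{\bf A}^I},\, \beta|_{{\bf A}^I}\bigr) = \bigl(\pi_1(\chi_\alpha),\, \beta\bigr) = \bigl(\pi_1(\chi_\alpha),\, \beta\bigr) + \bigl(\pi_2(\chi_\alpha),\, \beta\bigr) = (\chi_\alpha, \beta) = d_\alpha \, \delta_{\alpha\beta},
\end{align*}
where the middle equality uses that $(\pi_2(\chi_\alpha), \beta) = 0$, and the last equality is the defining property of the quasi-fundamental weight $\chi_\alpha$. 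Since the $d_\alpha$ are in $\QQ_{>0}$ for $\alpha \in I$, this is exactly what is required for $\{\chi_\alpha|_{{\bf A}^I}\}_{\alpha \in I}$ to be a set of quasi-fundamental weights on $X^*({\bf A}^I)_\QQ$ with respect to the restricted inner product.

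There is essentially no obstacle here beyond unwinding the definitions; the entire content of the lemma is the orthogonality statement already proved in Lemma \ref{orth}, and the observation that $I$ sits inside $X^*({\bf A}^I)_\QQ$ under the identification (\ref{directprod1}). The only thing one might want to check for completeness is that the restricted form on $X^*({\bf A}^I)_\QQ$ is still non-degenerate, but this was already recorded in the discussion preceding Lemma \ref{orth}.
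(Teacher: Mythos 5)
Your proof is correct and follows essentially the same route as the paper's: both arguments reduce the claim to the orthogonality of the decomposition~(\ref{directprod1}) established in Lemma~\ref{orth}, together with the observation that $\pi_2(\beta)=0$ for $\beta\in I$, so that $(\pi_1(\chi_\alpha),\pi_1(\beta))=(\chi_\alpha,\beta)=d_\alpha\delta_{\alpha\beta}$. Your version merely unpacks these steps a bit more explicitly and adds the (already-recorded) remark about non-degeneracy of the restricted form.
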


\begin{proof}
Let $\alpha,\beta\in I$. Then
\begin{align*}
d_\alpha\cdot\delta_{\alpha\beta}=(\chi_\alpha,\beta)=(\pi_1(\chi_\alpha)+\pi_2(\chi_\beta),\pi_1(\beta))=(\pi_1(\chi_\alpha),\pi_1(\beta)),
\end{align*}
where the second equality follows from the fact that $\pi_2(\beta)=0$ for any $\beta\in I$, and the third equality follows from Lemma \ref{orth}.
\end{proof}

Finally, we make an observation.

\begin{lem}\label{warning}
Let $\beta\in\Delta\setminus I$. Then, as a linear combination of the $\QQ$-simple roots $\pi_1(\alpha)$, for $\alpha\in I$, the restriction $\pi_1(\beta)$ has non-positive coefficients.
\end{lem}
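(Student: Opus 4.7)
The plan is to dualize: rather than expanding $\pi_1(\beta)$ in the basis of simple roots of ${\bf M}_I$ directly, I would read off the coefficients by pairing with a suitable set of quasi-fundamental weights and then exploit two standard facts about root systems, namely Lemma \ref{nonneg} and the non-positivity of the inner product between distinct simple roots.

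More precisely, I would first observe that $\pi_1(\gamma)=\gamma$ for $\gamma\in I$ (since $\gamma$ is trivial on ${\bf A}_I$), so that $\{\gamma:\gamma\in I\}$ is a basis of $X^*({\bf A}^I)_\QQ$ and coincides with the set of simple $\QQ$-roots of ${\bf M}_I$ with respect to ${\bf A}^I$. Since $(\cdot,\cdot)$ restricts to a non-degenerate, ${\bf N}_{{\bf M}_I}({\bf A}^I)(\QQ)$-invariant inner product on $X^*({\bf A}^I)_\QQ$, there exists a set $\{\chi^I_\alpha\}_{\alpha\in I}$ of quasi-fundamental weights for ${\bf M}_I$ in $X^*({\bf A}^I)_\QQ$, satisfying $(\chi^I_\alpha,\gamma)=d_\alpha\delta_{\alpha\gamma}$ for all $\alpha,\gamma\in I$ with $d_\alpha>0$. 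Applying Lemma \ref{nonneg} to the root system of ${\bf M}_I$, each $\chi^I_\alpha$ is a \emph{non-negative} linear combination of the $\gamma\in I$.

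Now write $\pi_1(\beta)=\sum_{\gamma\in I}c_\gamma\gamma$; the claim is that every $c_\gamma$ is non-positive. Pairing with $\chi^I_\alpha$ gives $(\chi^I_\alpha,\pi_1(\beta))=c_\alpha d_\alpha$. On the other hand, since $\chi^I_\alpha\in X^*({\bf A}^I)_\QQ$ and $\pi_2(\beta)\in X^*({\bf A}_I)_\QQ$, Lemma \ref{orth} yields $(\chi^I_\alpha,\pi_2(\beta))=0$, so
\[
(\chi^I_\alpha,\pi_1(\beta))=(\chi^I_\alpha,\beta)=\sum_{\gamma\in I}a_{\alpha,\gamma}(\gamma,\beta),
\]
where $a_{\alpha,\gamma}\geq 0$ are the coefficients expressing $\chi^I_\alpha$ in the basis $\{\gamma:\gamma\in I\}$.

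To finish, I would invoke the standard root-system fact that for distinct simple roots $\gamma,\beta\in\Delta$ one has $(\gamma,\beta)\leq 0$ (a consequence of the invariance of $(\cdot,\cdot)$ under the Weyl group, see e.g.\ \cite{springer:redgroups}, Section 3.5). Since $\beta\in\Delta\setminus I$ and $\gamma\in I$, every inner product $(\gamma,\beta)$ in the sum is non-positive, hence $(\chi^I_\alpha,\beta)\leq 0$ and therefore $c_\alpha\leq 0$. The only place where one needs to be slightly careful is making sure the quasi-fundamental weights of ${\bf M}_I$ exist and satisfy Lemma \ref{nonneg}; but this is immediate once one notes that ${\bf M}_I$, equipped with ${\bf A}^I$ and the simple system $\{\gamma:\gamma\in I\}$, fits into exactly the same framework as Sections \ref{weyl}--\ref{s2.9}. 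There is no substantive obstacle beyond this bookkeeping.
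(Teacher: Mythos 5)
Your proof is correct and uses exactly the same ingredients as the paper's (the non-positivity of $(\gamma,\beta)$ for distinct simple roots, Lemma \ref{orth}, Lemma \ref{nonneg}, and the dual pairing with quasi-fundamental weights in $X^*({\bf A}^I)_\QQ$). The only difference is presentational: the paper expands $\pi_1(\beta)$ in the quasi-fundamental-weight basis and then converts via Lemma \ref{nonneg}, whereas you expand in the simple-root basis and pair against $\chi^I_\alpha$ (expanding $\chi^I_\alpha$ via Lemma \ref{nonneg} instead) — a dual rewriting of the same calculation.
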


\begin{proof}
Recall the basic fact that the scalar product of any two distinct simple roots is non-positive. Therefore, for any $\alpha\in I$,
\begin{align*}
(\pi_1(\alpha),\pi_1(\beta))=(\alpha,\beta)
\end{align*}
is non-positive, where we use the fact that $\pi_2(\alpha)=0$. Therefore, if we let $\{\chi_\alpha\}_{\alpha\in I}$ denote a set of quasi-fundamental weights in $X^*({\bf A}^{I})_\QQ$, then $\pi_1(\beta)$, as a linear combination of the $\chi_\alpha$, has non-positive coefficients. Hence, the result follows from Lemma \ref{nonneg}.
\end{proof}

\subsection{Groups of type $\mathcal{H}$}\label{typeH}
We say that an algebraic group $\bf G$ is {\it of type $\mathcal{H}$} if ${\bf R}_{\bf G}$ is unipotent and the quotient of $\bf G$ by ${\bf R}_{\bf G}$ is an almost direct product of almost $\QQ$-simple algebraic groups whose underlying real Lie groups are non-compact. In particular, an algebraic group of type $\mathcal{H}$ has no rational characters.

\subsection{Probability measures on homogeneous spaces}\label{measures}
Let $\bf G$ denote an algebraic group and let $\Gamma$ denote an arithmetic subgroup of ${\bf G}(\QQ)$ contained in $G$. We will henceforth refer to such a group as {\it arithmetic subgroup of $G_\QQ$}. If $\bf H$ is a connected algebraic subgroup of $\bf G$ possessing no rational characters, then there is a unique Haar measure on $H$ such that its pushforward $\mu$ to $\Gamma\backslash G$ is a Borel probability measure on $\Gamma\backslash G$. For $g\in G$, we refer to the pushforward of $\mu$ under the right-multiplication-by-$g$ map as the {\it homogeneous probability measure on $\Gamma\backslash G$ associated with ${\bf H}$ and $g$}.

\begin{rem}\label{freegamma}
It is clear that, for any $\gamma\in\Gamma$, the homogeneous probability measure on $\Gamma\backslash G$ associated with ${\bf H}$ and $g$ is equal to the homogenous probability measure on $\Gamma\backslash G$ associated with $\gamma{\bf H}\gamma^{-1}$ and $\gamma g$. 
\end{rem}

The following well-known fact summarises our heavy reliance on the fundamental results of Ratner \cite{ratner:measure} and of Mozes and Shah \cite{MS:ergodic}.

\begin{teo}\label{ratner}
For each $n\in\NN$, let ${\bf H}_n$ be a connected algebraic subgroup of $\bf G$ of type $\mathcal{H}$, let $g_n\in G$ and let $\mu_n$ be the homogeneous probability measure on $\Gamma\backslash G$ associated with ${\bf H}_n$ and $g_n$. Assume that $(\mu_n)_{n\in\NN}$ converges to $\mu\in\mathcal{P}(\Gamma\backslash G)$. Then $\mu$ is the homogeneous probability measure on $\Gamma\backslash G$ associated with a connected algebraic subgroup $\bf H$ of $\bf G$ of type $\mathcal{H}$ and an element $g\in G$, and, furthermore, ${\bf H}_n$ is contained in $\bf H$ for all $n$ large enough.
\end{teo}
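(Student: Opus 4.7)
The plan is to reduce the statement to a direct invocation of the theorem of Mozes--Shah \cite{MS:ergodic}, packaged with a small amount of structure theory to pass between closed connected real Lie subgroups and $\QQ$-algebraic subgroups of type $\mathcal{H}$. As a first step, I would verify that each $H_n$ is generated by its one-parameter $\mathrm{Ad}$-unipotent subgroups, so that the hypothesis of Mozes--Shah applies to $\mu_n$. Since ${\bf H}_n$ is of type $\mathcal{H}$, the radical ${\bf R}_{{\bf H}_n}$ is unipotent (and a connected real unipotent Lie group is obviously generated by one-parameter subgroups), while ${\bf H}_n/{\bf R}_{{\bf H}_n}$ is an almost direct product of almost $\QQ$-simple groups whose real Lie groups are non-compact, and any such factor is generated by its unipotent elements (for instance by the Bruhat decomposition, cf.\ \cite{bt:groupes}).

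Next, the main theorem of \cite{MS:ergodic} then produces a closed connected Lie subgroup $L\leq G$, generated by one-parameter $\mathrm{Ad}$-unipotent subgroups, together with $g\in G$ such that $L\cap gLg^{-1}\cap \Gamma$\ldots more precisely, such that $L\cap g^{-1}\Gamma g$ is a lattice in $L$, and $\mu=\mu_{L,g}$. The same theorem furnishes, for all sufficiently large $n$, elements $\gamma_n\in\Gamma$ with $\gamma_n H_n\gamma_n^{-1}\subseteq L$. By Remark \ref{freegamma}, replacing $({\bf H}_n,g_n)$ by $(\gamma_n{\bf H}_n\gamma_n^{-1},\gamma_n g_n)$ does not alter $\mu_n$, so after this replacement we may assume $H_n\subseteq L$ for all $n$ large.

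To finish, I would promote $L$ to the identity component of ${\bf H}(\RR)^+$ for a connected $\QQ$-algebraic subgroup ${\bf H}\leq{\bf G}$, invoking Shah's theorem that a closed connected Lie subgroup of $G$ generated by one-parameter unipotent subgroups and admitting a lattice of the form $L\cap g^{-1}\Gamma g$ arises as the connected component of the real points of a $\QQ$-algebraic subgroup. I would then check that ${\bf H}$ is of type $\mathcal{H}$: since $H$ is generated by unipotents, ${\bf H}$ admits no non-trivial $\QQ$-rational character, its radical ${\bf R}_{{\bf H}}$ must be unipotent (a non-trivial central torus would not lie in the subgroup generated by unipotents), and ${\bf H}/{\bf R}_{{\bf H}}$ has no compact almost $\QQ$-simple factor (such a factor contains no non-trivial unipotents). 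The containment $H_n\subseteq H$ then forces ${\bf H}_n\subseteq{\bf H}$, because ${\bf H}$ is Zariski closed and ${\bf H}_n$ is the Zariski closure of the identity component of its real points.

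The substantive work is in the black-box application of Mozes--Shah; the main obstacle, as far as the proof sketch is concerned, is the translation from their Lie-theoretic output to the $\QQ$-algebraic framework of type $\mathcal{H}$ subgroups, in particular the appeal to Shah's rationality theorem and the verification of the no-compact-factor condition. Everything else is structural bookkeeping.
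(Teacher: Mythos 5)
Your overall strategy is the same as the paper's: run the Mozes--Shah limit theorem and then translate its Lie-theoretic output into the $\QQ$-algebraic, type $\mathcal{H}$ framework. The translation at the end (Shah's rationality theorem, checking type $\mathcal{H}$, Zariski closure of the identity component) is a reasonable substitute for the paper's appeal to \cite{CU:measures}, Lemme~3.2, and I have no objection to it.

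However, your first step contains a genuine gap. You assert that each $H_n = {\bf H}_n(\RR)^+$ is generated by its one-parameter unipotent subgroups, justified by unipotence of ${\bf R}_{{\bf H}_n}$ together with the claim that each non-compact almost $\QQ$-simple factor of the Levi quotient is ``generated by its unipotent elements (for instance by the Bruhat decomposition).'' This is false in general. An almost $\QQ$-simple group $\bf L$ with $L$ non-compact can still have compact factors over $\RR$: take for instance ${\bf L}=\Res_{F/\QQ}{\bf SL}_1(D)$ for a real quadratic field $F$ and a quaternion division algebra $D/F$ ramified at exactly one infinite place, so that $L\cong {\rm SU}(2)\times{\rm SL}_2(\RR)$. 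Here $L$ is non-compact, but the ${\rm SU}(2)$ factor contains no non-trivial $\RR$-unipotents, so $L$ is not generated by its unipotent one-parameter subgroups. The Bruhat decomposition cannot rescue this: it yields a generating set at the level of $\QQ$-points or of $\overline{\QQ}$-points, not of the real group, and in any case $\bf L$ may even be $\QQ$-anisotropic. This is precisely the issue that \cite{CU:measures}, Lemme~3.1 is designed to handle: it shows that the proper subgroup of $g_n^{-1}H_ng_n$ generated by one-parameter unipotent subgroups still acts ergodically with respect to $\mu_n$, which is the actual hypothesis needed to feed $(\mu_n)$ into Mozes--Shah. Replacing your first paragraph with an appeal to that lemma (or an equivalent argument that the measure $\mu_n$ is ergodic under the unipotently generated part of $g_n^{-1}H_ng_n$) repairs the proof; as written, the verification of the Mozes--Shah hypotheses is incorrect.
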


We give a brief summary of the arguments. 

\begin{proof}[Proof of Theorem \ref{ratner}]
By \cite{CU:measures}, Lemme 3.1, for every $n\in\NN$, the subgroup of $g^{-1}_nH_ng_n$ generated by the unipotent one-parameter subgroups of $G$ contained in $g^{-1}_nH_ng_n$ acts ergodically on $\Gamma\backslash G$ with respect to $\mu_n$. By \cite{MS:ergodic}, Corollary 1.1, we conclude that the group generated by the unipotent one-parameter subgroups of $G$ contained in the invariance group of $\mu$ acts ergodically on $\Gamma\backslash G$ with respect to $\mu$. By \cite{ratner:measure}, the support of $\mu$ is a closed orbit of its invariance group. Therefore, the first claim follows from \cite{CU:measures}, Lemme 3.2. The second claim follows from \cite{MS:ergodic}, Theorem 1.1 (2) and \cite{CU:measures}, Lemme 3.2. 
\end{proof}

\section{Formulating the conjectures}\label{s3}

\subsection{The maximal Satake compactification}\label{sat}

Let $\bf G$ be a semisimple algebraic group of adjoint type and let $K$ be a maximal compact subgroup of $G$. Denote by $X$ the symmetric space $G/K$ and let $\Gamma$ denote an arithmetic subgroup of ${\bf G}(\QQ)$ contained in $G$. We let
\begin{align*}
_{\QQ}\overline{X}^S_{\rm max}:=\coprod_{\bf P}X_P,
\end{align*}  
where $\bf P$ varies over the (rational) parabolic subgroups of $\bf G$. We endow $_{\QQ}\overline{X}^S_{\rm max}$ with the topology defined in \cite{BJ:compactifications}, III.11.2. Then, by \cite{BJ:compactifications}, Proposition III.11.7, the action of $G_\QQ$ on $X$ extends to a continuous action on $_{\QQ}\overline{X}^S_{\rm max}$ and, by \cite{BJ:compactifications}, Theorem III.11.9, the quotient
\begin{align*}
\overline{\Gamma\backslash X}^S_{\rm max}:=\Gamma\backslash_{\QQ}\overline{X}^S_{\rm max},
\end{align*}
endowed with the quotient topology, is a compact Hausdorff space, inside of which $\Gamma\backslash X$ is a dense open subset. We refer to $\overline{\Gamma\backslash X}^S_{\rm max}$ as the {\it maximal Satake compactification} of $\Gamma\backslash X$. 

For any parabolic subgroup $\bf P$ of $\bf G$, we will denote by $\Gamma_P:=\Gamma\cap P$. Then, if $\mathcal{C}$ is any set of representatives for the (rational) parabolic subgroup of $\bf G$ modulo $\Gamma$-conjugation, the maximal Satake compactification $\overline{\Gamma\backslash X}^S_{\rm max}$ is equal to the disjoint union of the $\Gamma_P\backslash X_P$, with ${\bf P}$ varying over the members of $\mathcal{C}$.

\subsection{Main conjecture}\label{main}

Consider the situation described in Section \ref{sat}. If $\bf H$ is a connected algebraic subgroup of $\bf G$ of type $\mathcal{H}$ and $g\in G$, the homogeneous probability measure on $\Gamma\backslash G$ associated with ${\bf H}$ and $g$ pushes forward to $\overline{\Gamma\backslash X}^S_{\rm max}$ under the natural maps
\begin{align*}
\Gamma\backslash G\rightarrow\Gamma\backslash X\rightarrow\overline{\Gamma\backslash X}^S_{\rm max}.
\end{align*} 
We refer to this probability measure as the {\it homogeneous probability measure on $\overline{\Gamma\backslash X}^S_{\rm max}$ associated with $\bf H$ and $g$}. Similarly, if $\bf P$ is a parabolic subgroup of $\bf G$, $\bf H$ is a subgroup of ${\bf P}$ of type $\mathcal{H}$ and $g\in P$, we can define the {\it homogeneous probability measure on $\overline{\Gamma\backslash X}^S_{\rm max}$ associated with $\bf P$, $\bf H$ and $g$} in precisely the same way via the natural maps
\begin{align*}
\Gamma_P\backslash P\rightarrow\Gamma_P\backslash X_P\rightarrow\overline{\Gamma\backslash X}^S_{\rm max}.
\end{align*}

The following conjecture is a more precise version of Conjecture \ref{introconj} in this setting, and is the main statement that we will endeavour to prove in certain cases.

\begin{conj}\label{mainconj}
For each $n\in\NN$, let ${\bf H}_n$ be a connected algebraic subgroup of $\bf G$ of type $\mathcal{H}$, let $g_n\in G$ and let $\mu_n$ be the homogeneous probability measure on $\overline{\Gamma\backslash X}^S_{\rm max}$ associated with ${\bf H}_n$ and $g_n$. Suppose that $(\mu_n)_{n\in\NN}$ converges to a limit $\mu\in\mathcal{P}(\overline{\Gamma\backslash X}^S_{\rm max})$. Then $\mu$ is homogeneous.

Furthermore, if $\mu$ is associated with a parabolic subgroup ${\bf P}$ of $\bf G$, a connected algebraic subgroup $\bf H$ of ${\bf P}$ of type $\mathcal{H}$ and an element $g\in P$, then ${\bf H}_n$ is contained in $\bf H$ for $n$ large enough.
\end{conj}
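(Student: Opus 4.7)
The plan is to combine Ratner-type rigidity in the interior with a reduction-theoretic analysis of escape to the boundary. Because $\overline{\Gamma\backslash X}^S_{\rm max}$ is compact, after passing to a subsequence we may assume $\mu_n \to \mu$ weakly in $\mathcal{P}(\overline{\Gamma\backslash X}^S_{\rm max})$. I would split into two cases according to whether $\mu(\Gamma\backslash X) > 0$ or $\mu$ is concentrated on the boundary $\coprod_{\bf P} \Gamma_P\backslash X_P$. In the first case, Theorem \ref{criterionprop} should force $\mu(\Gamma\backslash X) = 1$ (ruling out fractional mass loss), after which Theorem \ref{ratner}, applied to lifts of the $\mu_n$ to $\Gamma\backslash G$, identifies $\mu$ as a homogeneous measure on $\Gamma\backslash X$ and simultaneously produces ${\bf H}_n \subseteq {\bf H}$ for $n$ large. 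The delicate input here is the non-divergence estimate: one needs Borel's $(\phi,\chi)$-functions (Theorem \ref{criterionprop}), not only the interior Mozes-Shah results.

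In the escape case, I would use reduction theory to normalise the parameters. For each $n$, after replacing $(g_n,{\bf H}_n)$ by $(\gamma_n g_n, \gamma_n {\bf H}_n \gamma_n^{-1})$ via Remark \ref{freegamma}, we may assume $g_n$ lies in a fixed Siegel set relative to a minimal $\QQ$-parabolic ${\bf P}_0$. By finiteness of $\Gamma$-conjugacy classes of rational parabolics and, after further subsequence extraction, stabilisation of a common rational parabolic ${\bf P}$ containing every ${\bf H}_n$ (so that, in particular, $g_n \in P$ up to bounded adjustment), we reduce to a fixed ${\bf P}$. Writing the rational Langlands decomposition $g_n = u_n a_n m_n k_n \in N_{\bf P} A_{\bf P} M_{\bf P} K$ and pigeonholing over the finitely many simple roots $\alpha \in \Phi(P,A_{\bf P})$, a further subsequence guarantees that each $\alpha(a_n)$ is either bounded (above and below) or tends to $+\infty$. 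Let $I \subseteq \Delta$ collect the bounded simple roots and let ${\bf Q} = {\bf P}_I$ be the associated standard parabolic. Theorem \ref{convergencecriterion} should then identify $\mu$ as a homogeneous measure on the single stratum $\Gamma_Q \backslash X_Q$, with supporting subgroup ${\bf H}$ assembled from the limits of $k_n$ and $m_n$ (via compactness of $K$ and a Mozes-Shah limit for the residual sequence in $(\Gamma\cap M_{\bf P})\backslash M_{\bf P}$), together with the contributions of $u_n$ after normalisation by the $A_{\bf P}$-directions that blow up.

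The main obstacle is verifying that the limit truly lives on precisely one boundary component, and that the $\bf H$ so produced really contains each ${\bf H}_n$. The coupling between the bounded and unbounded directions of $a_n$ is delicate: components of ${\bf H}_n$ whose projections meet the $A_{\bf P}$-unbounded walls must be absorbed into $N_{\bf Q}$-like directions, and so disappear into the unipotent radical of the boundary stratum, while the bounded directions should contribute to the genuine $M_{\bf Q}$-factor of $\bf H$. Controlling this absorption uniformly in $n$, and ruling out that the limit spreads across strata corresponding to distinct subsets $I \subseteq \Delta$ (prevented only by the stabilisation step above), is the core of the conjecture. This is precisely why, in the paper, the conjecture is established only under structural restrictions on $\bf G$ or on ${\bf H}_n$: in those cases the combinatorics of $\Phi(P,A_{\bf P})$ and the possibilities for ${\bf H}_n$ are constrained enough that the compatibility between the $({\bf H}_n)$ and the limiting direction of $(a_n)$ can be checked directly and Theorem \ref{convergencecriterion} applies.
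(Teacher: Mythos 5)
This statement is Conjecture~\ref{mainconj}, and the paper does \emph{not} prove it: the body of the paper only establishes it under additional hypotheses (Sections~\ref{s6}--\ref{s13}). There is therefore no ``paper's own proof'' to compare against. What you have written is a strategy, and to your credit you are explicit that it does not close. That said, it is worth pinpointing where the plan actually breaks, because some of the gaps are more serious than your closing caveat acknowledges.

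Your two-case framing and your choice of machinery do track what the paper develops: Theorem~\ref{criterionprop} is precisely the non-divergence criterion used to detect when mass stays in $\Gamma\backslash X$, and Theorem~\ref{convergencecriterion} is precisely the device used to identify a homogeneous limit on a boundary stratum $\Gamma_P\backslash X_P$. The three bullet points in the introduction's description of how these criteria must be used (locate parabolics containing $\mathbf{H}_n$; compute Langlands decompositions of $g_n$; track the asymptotics of $\alpha(a_n)$) are exactly your three steps. So as a description of the paper's toolkit your proposal is accurate.

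However, the escape case as you set it up does not actually reach the hypotheses of Theorem~\ref{convergencecriterion}. After pigeonholing so that each simple root $\alpha\in\Delta$ satisfies $\alpha(a_n)\to c_\alpha\in[0,\infty]$ (note you omit the $c_\alpha=0$ possibility, which does occur), you propose to take $I$ to be the bounded directions and set $\mathbf{Q}=\mathbf{P}_I$. But the $A_{\mathbf Q}$-component of $g_n$ in the $\mathbf{Q}$-Langlands decomposition is $a_{n,I}$, the projection of $a_n$ onto $A_I$, \emph{not} $a_n$ itself; and for $\beta\in\Delta\setminus I$ the quantity $\beta(a_{n,I})$ is not controlled by $\beta(a_n)$. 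Writing $a_n=a_n^I a_{n,I}$ with $a_n^I\in A^I$, one has $\beta(a_n)=\beta(a_n^I)\beta(a_{n,I})$, and Lemma~\ref{warning} shows that $\beta$ restricted to $A^I$ is a \emph{non-positive} combination of the simple roots of $\mathbf{M}_I$, so $\beta(a_n^I)$ can tend to $0$. Hence $\beta(a_n)\to\infty$ does not force $\beta(a_{n,I})\to\infty$, and Theorem~\ref{convergencecriterion} cannot be applied to $\mathbf{Q}=\mathbf{P}_I$ without further work. This is exactly what the $\mathbf{SL}_3$ analysis (Theorem~\ref{teo8.1}) is fighting: after one step they land in the situation $\beta(b_n)\to\infty$, $\alpha(a_n)\to\infty$, but $\alpha(b_n a_n)$ can converge to any $c\in[0,\infty)$, and the argument then forks into half a dozen sub-cases depending on the structure of $\mathbf{H}_n$ and the size of $c$. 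Your ``stabilise a common rational parabolic $\mathbf{P}$ containing every $\mathbf{H}_n$ so that $g_n\in P$ up to bounded adjustment'' is likewise asserted rather than produced; there is no reason a priori for such a $\mathbf{P}$ to exist, and producing one (or a usable substitute) is exactly where the paper's unconditional results stop at $\QQ$-rank $1$.

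One further issue in the interior case: you write that $\mu(\Gamma\backslash X)>0$ together with Theorem~\ref{criterionprop} ``forces $\mu(\Gamma\backslash X)=1$.'' Theorem~\ref{criterionprop} is a one-way implication: the positivity of $\liminf_n\delta_{K,\Delta,F}(\mathbf{H}_n,g_n)$ implies tightness. It does not say that partial mass retention forces that $\liminf$ to be positive. The no-partial-escape statement is a Mozes--Shah-type consequence of ergodicity of the limit, not of this criterion, and whether it holds verbatim for the Satake compactification (where the boundary is not a point) is part of what Conjecture~\ref{mainconj} is claiming, not something you may assume. In short: your sketch correctly identifies the paper's tools and the central combinatorial difficulty, but the construction of $\mathbf{Q}$ and the appeal to Theorem~\ref{convergencecriterion} do not go through as stated, and the missing steps are precisely why this remains a conjecture.
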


Now consider another maximal compact subgroup $gKg^{-1}$ of $G$, for some $g\in G$ (recall that they are all of this form). It is straightforward to verify that we obtain a homeomorphism between the maximal Satake compactifications of $\Gamma\backslash X$ corresponding to $K$ and $gKg^{-1}$. In particular, Conjecture \ref{mainconj} is equivalent to the same statement in which $K$ is replaced with $gKg^{-1}$ and the $g_n$ are replaced with $g_ng^{-1}$.

Similarly, for a fixed $c\in G_\QQ$, we obtain a homeomorphism
\begin{align*}
\Gamma\backslash_{\QQ}\overline{X}^S_{\rm max}\rightarrow (c\Gamma c^{-1})\backslash_{\QQ}\overline{X}^S_{\rm max}
\end{align*}
of compactifications induced by the homeomorphism $x\mapsto cx$ on $_{\QQ}\overline{X}^S_{\rm max}$ (recall that the action is continuous). It follows that Conjecture \ref{mainconj} is equivalent to the same statement in which we replace $\Gamma$ with $c\Gamma c^{-1}$, and we replace the ${\bf H}_n$ with $c{\bf H}_nc^{-1}$ and the $g_n$ with $cg_n$.

Nontheless, despite the aformentioned observations, we are unable to provide an argument that Conjecture \ref{mainconj} is independent of the choice of $\Gamma$. Of course, Conjecture \ref{mainconj} for $\Gamma$ immediately implies Conjecture \ref{mainconj} for any arithmetic subgroup containing $\Gamma$. However, it is not clear that Conjecture \ref{mainconj} for $\Gamma$ implies Conjecture \ref{mainconj} for an arithmetic subgroup contained in $\Gamma$. Largely speaking, our arguments do not rely on the specific choice of $\Gamma$, though we do make use of the fact that $\Gamma={\bf SL}_3(\ZZ)$ in Section \ref{s8}, for example.

\subsection{Baily-Borel compactification}\label{bor}
Let $({\bf G},{\bf X})$ denote a Shimura datum, where $\bf G$ is a semisimple algebraic group of adjoint type, and let $X$ denote a connected component of $\bf X$. Let $K$ be a maximal compact subgroup of $G$ and identify $X$ with $G/K$. Let $\Gamma$ denote an arithmetic subgroup of $G_\QQ$.

Via the {\it Harish-Chandra realization}, we consider $X$ as a bounded domain in $\CC^N$ for some $N\in\NN$, and we let $\bar{X}$ denote the closure of $X$ therein. Then, as in \cite{BB:compactification}, Section 1.5, $\bar{X}$ decomposes into a disjoint union of boundary components and we let $X^*$ denote the union of the {\it rational} boundary components, as defined in \cite{BB:compactification}, Section 3.5. By \cite{BB:compactification}, Section 1.4, the action of $G$ on $X$ extends to a continuous action of $G$ on $\bar{X}$ and, by \cite{BB:compactification}, Section 4.8, this restricts to an action of $G_\QQ$ on $X^*$. 

We equip $X^*$ with the {\it Satake topology}, described in \cite{BB:compactification}, Theorem 4.9. For this topology, the action of $G_\QQ$ is continuous and, by \cite{BB:compactification}, Corollary 4.11, the quotient
\begin{align*}
\overline{\Gamma\backslash X}^{BB}:=\Gamma\backslash X^*
\end{align*}
endowed with the quotient topology, is a compact Hausdorff space, inside of which $\Gamma\backslash X$ is a dense open subset. In fact, the main result of \cite{BB:compactification} is that $\overline{\Gamma\backslash X}^{BB}$ possesses the structure of a complex projective variety. We refer to it as the {\it Baily-Borel compactification} of $\Gamma\backslash X$. In this case, $\overline{\Gamma\backslash X}^{BB}$ is the disjoint union of $\Gamma\backslash X$ and boundary components corresponding to $\Gamma$-conjugacy classes of maximal parabolic subgroups of $\bf G$.

Note that, for any Shimura datum $({\bf G},{\bf X})$, if $X$ is a connected component of $\bf X$, then the action of $G$ on $X$ factors through $G^{\ad}$, where ${\bf G}^{\rm ad}$ denotes the quotient of ${\bf G}$ by its centre. Furthermore, by \cite{milne:shimura}, Proposition 3.2, the image in ${\bf G}^{\rm ad}(\QQ)$ of an arithmetic subgroup of ${\bf G}(\QQ)$ is an arithmetic subgroup. Therefore, any connected component of any Shimura variety is accounted for in our description.

\subsection{Relationship between the compactifications}\label{rel}

Consider the situation described in Section \ref{bor} and suppose temporarily that $\bf G$ is $\QQ$-simple. By \cite{BB:compactification}, Theorem 3.7,
\begin{align*}
X^*=\coprod_{\bf P}e({P}),
\end{align*} 
where $\bf P$ varies over the maximal parabolic subgroups of $\bf G$ and $\bf G$ itself, and $e(P)$ is the unique rational boundary component normalized by $P$. 

Let $\bf P$ be a maximal parabolic subgroup of $\bf G$ and let ${\bf P}_0$ be a minimal parabolic subgroup of $\bf G$ contained in $\bf P$. Let $\bf A$ be a maximal split subtorus of $\bf G$ contained in ${\bf P}_0$. Therefore, $\bf P$ is equal to ${\bf P}_J$ for a unique subset $J\subseteq\Delta:=\Delta({\bf P}_0,{\bf A})$. 

Fix the {\it canonical numbering}
\begin{align*}
\Delta=\{\alpha_1,\cdots,\alpha_s\},
\end{align*}
as in \cite{BB:compactification}, Section 2.8, and, for $b=0,1,\ldots,s-1$, let
\begin{align*}
\theta(b):=\{\alpha_{b+1},\cdots,\alpha_s\}.
\end{align*}
For $b=s$, let $\theta(b)$ denote the empty set. By \cite{BJ:compactifications}, III.4.2, if $\alpha_b\in\Delta$ is the unique element not in $J$, we may identify $e(P)$ with the boundary symmetric space $X_Q$, where ${\bf Q}:={\bf P}_{\theta(b)}$. As in, \cite{BJ:compactifications}, Proposition I.11.3, this does not depend on the choice of ${\bf P}_0$ or $\bf A$.

Now let $\bf P$ be any parabolic subgroup of $\bf G$, and choose ${\bf P}_0$, $\bf A$ and $\Delta$ in the manner above. That is, ${\bf P}={\bf P}_J$ for a unique subset $J\subseteq\Delta$ (though, we have chosen a new $\Delta$, of course). We let $b=0,1,\ldots,s$ be the smallest index such that $I:=\theta(b)$ is contained in $J$, and we let $I'$ denote $J\setminus I$. 

If $b=0$, then ${\bf P}$ is equal to ${\bf G}$. Therefore, assume that $b>0$. As in \cite{BJ:compactifications}, Proposition I.11.3,
\begin{align*}
X_P=X_{P_I}\times X_{P_{I'}}
\end{align*}
and this splitting is independent of our choices for ${\bf P}_0$ and $\bf A$. Note that $X_{P_I}$ is also the boundary component $e(P_{\alpha_b})$ of $X^*$ corresponding to the maximal parabolic subgroup ${\bf P}_{\alpha_b}$. In particular, varying over the proper parabolic subgroups $\bf P$ of $\bf G$, the projection maps
\begin{align*}
X_P\rightarrow X_{P_I}
\end{align*}
extend the identity map on $X$ to a surjective $\Gamma$-equivariant map from 
\begin{align*}
_{\QQ}\overline{X}^S_{\rm max}\rightarrow X^*
\end{align*}
that is also continuous by Proposition \ref{map} below. More generally, when $\bf G$ is semisimple of adjoint type, $X$ is equal to a product $X_1\times\cdots\times X_r$ of irreducible factors corresponding to the $\QQ$-simple factors of $\bf G$. The partial compactifications $_{\QQ}\overline{X}^S_{\rm max}$ and $X^*$ of $X$ are then the products of the partial compactifications of the irreducible factors and we obtain a map between them by taking the product of the maps defined above.

\begin{prop}\label{map}
The surjective $\Gamma$-equivariant map from $_{\QQ}\overline{X}^S_{\rm max}$ to $X^*$ defined above is continuous. Therefore, we obtain a continuous surjective map
\begin{align*}
\overline{\Gamma\backslash X}^S_{\rm max}\rightarrow\overline{\Gamma\backslash X}^{BB}.
\end{align*}
\end{prop}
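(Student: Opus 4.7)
First I would reduce to the case in which ${\bf G}$ is $\QQ$-simple. By construction, both partial compactifications $_{\QQ}\overline{X}^S_{\rm max}$ and $X^*$ are compatible with products: for ${\bf G} = {\bf G}_1 \times \cdots \times {\bf G}_r$ with associated factors $X = X_1 \times \cdots \times X_r$, each partial compactification is the product of the corresponding partial compactifications of the $X_i$, and the map in question is the product of the maps defined factor-wise. Continuity therefore reduces to each $\QQ$-simple factor. On the interior $X$ the map is the identity, so the only issue is continuity at points of the boundary strata.

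Next I would work with explicit neighborhood bases, using \cite{BJ:compactifications}, III.11.2 on the Satake side and \cite{BB:compactification}, Section 4.9 on the Baily-Borel side. Fix a minimal parabolic ${\bf P}_0$, a maximal split $\QQ$-torus ${\bf A} \subset {\bf P}_0$, a set of simple roots $\Delta = \{\alpha_1, \ldots, \alpha_s\}$ in canonical numbering, and let ${\bf P} = {\bf P}_J$ with $b$ the smallest index such that $I = \theta(b) \subseteq J$ and $I' = J \setminus I$. A point $z = (z_1, z_2) \in X_P = X_{P_I} \times X_{P_{I'}}$ has, in $_{\QQ}\overline{X}^S_{\rm max}$, a neighborhood basis consisting of sets
\[
\mathcal{V}(U, t, W) = U \cdot \{a \in A_{\bf P} : \alpha(\log a) > t \text{ for every } \alpha \in \Delta({\bf P}, A_{\bf P})\} \cdot W,
\]
where $U$ is a compact neighborhood of the identity in $N_{\bf P}$ and $W$ is a neighborhood of $z$ in $X_P$. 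On the Baily-Borel side, a neighborhood basis of $z_1 \in X_{P_I} = e(P_{\alpha_b})$ is controlled by the horospherical decomposition attached to the single maximal parabolic ${\bf P}_{\alpha_b}$; it only demands that the value of the root $\alpha_b$ on the $A_{\alpha_b}$-component be large, that the $N_{\alpha_b}$-component stay bounded, and that the $M_{\alpha_b}$-component be close to $z_1$.

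The content of the proposition is then to show that, for any Baily-Borel neighborhood $\mathcal{O}$ of $z_1$, one can choose $U$, $W$ small enough and $t$ large enough that the image of $\mathcal{V}(U, t, W)$ under the projection to $X^*$ lies inside $\mathcal{O}$. The Satake condition that every $\alpha \in \Delta({\bf P}, A_{\bf P})$ be large on the $A_{\bf P}$-component implies, via the splitting $A_{\bf P} = A_{\alpha_b} \cdot (A_{\bf P} \cap M_{\alpha_b})$ and Lemmas \ref{nonneg} and \ref{warning}, that the restriction to the $A_{\alpha_b}$-factor escapes along $\alpha_b$; at the same time the convergence of the $M_{\bf P}$-component towards $z$ in $X_P$ projects to convergence of the $M_{\alpha_b}$-component towards $z_1$ in $X_{\alpha_b}$, because the projection $X_P \to X_{P_I}$ is continuous and $X_P$ already splits as $X_{P_I} \times X_{P_{I'}}$. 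The bounded $N_{\bf P}$-component projects to a bounded element of $N_{\alpha_b}$. Surjectivity is clear because every $e(P_{\alpha_b}) = X_{P_{\alpha_b}}$ is already hit by the map $X_{P_{\alpha_b}} \hookrightarrow {}_{\QQ}\overline{X}^S_{\rm max}$.

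The main obstacle is not conceptual but bookkeeping: the two topologies are defined by very different means (Borel-Ji via corners in the real Langlands decomposition, Baily-Borel via the Harish-Chandra bounded realization), so one has to match the horospherical neighborhood systems stratum by stratum, and in particular track how a general boundary component $X_P$ of the Satake picture projects onto the Hermitian factor $X_{P_I}$. Once the continuity and surjectivity of $_{\QQ}\overline{X}^S_{\rm max} \to X^*$ are established, the induced map $\overline{\Gamma\backslash X}^S_{\rm max} \to \overline{\Gamma\backslash X}^{BB}$ is continuous and surjective by $\Gamma$-equivariance together with the universal property of the quotient topology, both spaces being compact Hausdorff.
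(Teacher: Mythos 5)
Your proposal takes a genuinely different route from the paper: the paper disposes of this proposition by citing Propositions III.15.2 and III.15.4 of Borel--Ji \cite{BJ:compactifications}, which already package precisely the stratum-by-stratum comparison of neighborhood bases you are attempting to reconstruct. In effect you are re-proving those propositions from scratch. The outline is conceptually of the right shape, but one step as written is incorrect and a couple of others are glossed.

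The appeal to Lemma \ref{warning} is misplaced. That lemma asserts that the restriction of a root $\beta\in\Delta\setminus I$ to $A^I$ has \emph{non-positive} coefficients --- it is a caution, and is exactly what obstructs naive induction on parabolics elsewhere in the paper, not a tool for proving escape. To show that the $A_{\alpha_b}$-component of an element $a\in A_{\bf P}=A_J$ escapes along $\alpha_b$, you should instead use the quasi-fundamental weight $\chi_{\alpha_b}$: by Lemma \ref{fundweights} it vanishes on $A^{\alpha_b}=(A\cap M_{\alpha_b})^{\circ}$, by Lemma \ref{nonneg} it is a strictly positive combination of the simple roots, hence restricts to $A_J$ as a positive combination of the roots in $\Delta\setminus J$, and on the one-dimensional $A_{\alpha_b}$ it is a positive multiple of $\alpha_b$. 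Thus the Satake condition (all roots in $\Delta\setminus J$ large on $a$) forces $\chi_{\alpha_b}(a)\to\infty$, and since $\chi_{\alpha_b}$ is trivial on the $M_{\alpha_b}$-part, this is carried entirely by the $A_{\alpha_b}$-component, giving $\alpha_b\to\infty$ there. Lemma \ref{warning} by itself cannot deliver this conclusion.

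Two further points are skated over. The Satake topology on $X^*$ in \cite{BB:compactification}, Section 4.9 is not simply the cylindrical condition ``$\alpha_b$ large, $N_{\alpha_b}$-part bounded, $M_{\alpha_b}$-part close to $z_1$''; it is defined by a saturation procedure involving the normalizer of the boundary component, and matching it against the Borel--Ji neighborhood base is more delicate than the sketch suggests. And the argument as written only handles convergence of interior points of $X$ toward a boundary point; continuity also requires checking convergence between distinct boundary strata, which is the other half of what the cited Borel--Ji propositions establish.
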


\begin{proof}
See \cite{BJ:compactifications}, Proposition III.15.2 and Proposition III.15.4.
\end{proof}

\subsection{Conjecture for Baily-Borel compactification}

Consider the situation described in Section \ref{bor}. If ${\bf G}={\bf G}_1\times\cdots\times{\bf G}_r$ denotes the decomposition of $\bf G$ into its $\QQ$-simple factors, we say that a parabolic subgroup $\bf P$ of $\bf G$ is of type $BB$ if it is equal to a product of parabolic subgroups ${\bf P}_i$ of ${\bf G}_i$ such that ${\bf P}_i$ is either maximal or ${\bf G}_i$ itself.

As before, if $\bf H$ is a connected algebraic subgroup of $\bf G$ of type $\mathcal{H}$ and $g\in G$, the homogeneous probability measure on $\Gamma\backslash G$ associated with ${\bf H}$ and $g$ pushes forward to $\overline{\Gamma\backslash X}^{BB}$ under the natural maps
\begin{align*}
\Gamma\backslash G\rightarrow\Gamma\backslash X\rightarrow\overline{\Gamma\backslash X}^{BB}.
\end{align*} 
We refer to this probability measure as the {\it homogeneous probability measure on $\overline{\Gamma\backslash X}^{BB}$ associated with $\bf H$ and $g$}. Similarly, if $\bf P$ is a parabolic subgroup of $\bf G$ of type $BB$, $\bf H$ is a subgroup of ${\bf P}$ of type $\mathcal{H}$ and $g\in P$, we can define the {\it homogeneous probability measure on $\overline{\Gamma\backslash X}^{BB}$ associated with $\bf P$, $\bf H$ and $g$} in precisely the same way via the natural maps
\begin{align*}
\Gamma_P\backslash P\rightarrow\Gamma_{P_I}\backslash X_{P_I}\rightarrow\overline{\Gamma\backslash X}^{BB}
\end{align*}
for some set $I$ as constructed in Section \ref{rel}.

The following conjecture is a more precise version of Conjecture \ref{introconj} in this setting.

\begin{conj}\label{bbconj}
For each $n\in\NN$, let ${\bf H}_n$ be a connected algebraic subgroup of $\bf G$ of type $\mathcal{H}$, let $g_n\in G$ and let $\mu_n$ be the homogeneous probability measure on $\overline{\Gamma\backslash X}^{BB}$ associated with ${\bf H}_n$ and $g_n$. Suppose that $(\mu_n)_{n\in\NN}$ converges to a limit $\mu\in\mathcal{P}(\overline{\Gamma\backslash X}^{BB})$. Then $\mu$ is homogeneous.

Furthermore, if $\mu$ is associated with a parabolic subgroup ${\bf P}$ of $\bf G$ of type $BB$, a connected algebraic subgroup $\bf H$ of ${\bf P}$ of type $\mathcal{H}$ and an element $g\in P$, then ${\bf H}_n$ is contained in $\bf H$ for $n$ large enough.
\end{conj}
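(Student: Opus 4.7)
The plan is to deduce Conjecture \ref{bbconj} from Conjecture \ref{mainconj} by lifting the measures to the maximal Satake compactification and invoking the continuous surjection $\pi:\overline{\Gamma\backslash X}^S_{\rm max}\to\overline{\Gamma\backslash X}^{BB}$ furnished by Proposition \ref{map}. This is the content of Theorem \ref{maintobb} announced in the introduction.

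For each $n\in\NN$, let $\tilde{\mu}_n$ denote the homogeneous probability measure on $\overline{\Gamma\backslash X}^S_{\rm max}$ associated with ${\bf H}_n$ and $g_n$. Comparing the natural factorizations of the two definitions through $\Gamma\backslash X$ gives $\pi_*\tilde{\mu}_n=\mu_n$. Since $\overline{\Gamma\backslash X}^S_{\rm max}$ is compact Hausdorff, $\mathcal{P}(\overline{\Gamma\backslash X}^S_{\rm max})$ is sequentially compact for weak convergence, so we may extract a subsequence $\tilde{\mu}_{n_k}\to\tilde{\mu}$. Continuity of $\pi$ forces $\pi_*\tilde{\mu}=\mu$. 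Applying Conjecture \ref{mainconj} to $(\tilde{\mu}_{n_k})$, the limit $\tilde{\mu}$ is homogeneous, associated with a rational parabolic ${\bf P}$, a type-$\mathcal{H}$ subgroup ${\bf H}\subseteq{\bf P}$, and an element $g\in P$, with ${\bf H}_{n_k}\subseteq{\bf H}$ for $k$ large.

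The crux of the argument is to identify $\pi_*\tilde{\mu}$ as a BB-homogeneous measure. Assume first that ${\bf G}$ is $\QQ$-simple. Writing ${\bf P}={\bf P}_J$ and letting $b$ be the smallest index such that $I:=\theta(b)\subseteq J$, Section \ref{rel} gives $X_P=X_{P_I}\times X_{P_{I'}}$ and identifies $X_{P_I}$ with the BB-boundary component $e(P_{\alpha_b})$. Since $\alpha_b\notin J$, we have ${\bf P}\subseteq{\bf P}_{\alpha_b}$, so ${\bf H}$ is already a type-$\mathcal{H}$ subgroup of the type-$BB$ parabolic ${\bf P}_{\alpha_b}$ and $g\in P_{\alpha_b}$. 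The restriction of $\pi$ to $\Gamma_P\backslash X_P$ factors through projection onto $X_{P_I}$, and a direct check --- using that $\Gamma\cap H=\Gamma_P\cap H=\Gamma_{P_{\alpha_b}}\cap H$, together with compatibility of the natural map $\Gamma_P\backslash P\to\Gamma_{P_{\alpha_b}}\backslash P_{\alpha_b}$ with the relevant Langlands components --- shows that $\pi_*\tilde{\mu}$ equals the BB-homogeneous measure associated with the triple $({\bf P}_{\alpha_b},{\bf H},g)$. The general semisimple case follows factorwise across the $\QQ$-simple decomposition of ${\bf G}$, producing a type-$BB$ parabolic as a product. The required containment ${\bf H}_{n_k}\subseteq{\bf H}$ is inherited directly from the Satake conclusion.

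To pass from the subsequence to the full sequence, apply the extraction to any further subsequence of $(\mu_n)$: the resulting BB-limit must again equal $\mu$, and an elementary contradiction argument --- assuming ${\bf H}_n\not\subseteq{\bf H}$ for infinitely many $n$ and extracting from that bad subsequence --- combined with uniqueness of the associated type-$\mathcal{H}$ group in the Satake representation (and hence in its pushdown) delivers the containment for all large $n$. The main obstacle I anticipate is precisely the identification of pushforwards in the middle step: rigorously matching the projection of the Satake measure on $X_P$ to $X_{P_I}$ with the BB-homogeneous measure defined via $\Gamma_{P_{\alpha_b}}\backslash P_{\alpha_b}\to\Gamma_{P_I}\backslash X_{P_I}$. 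This is a bookkeeping task with two intertwined rational Langlands decompositions, but it introduces no new analytic input beyond Conjecture \ref{mainconj}.
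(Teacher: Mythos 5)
Your proposal follows the same route as the paper's proof of Theorem~\ref{maintobb}: lift each $\mu_n$ to the Satake-homogeneous measure $\sigma_n$ via the continuous surjection $\pi$ from Proposition~\ref{map}, use compactness of $\overline{\Gamma\backslash X}^S_{\rm max}$ to extract a convergent subsequence, apply the Satake conclusion to obtain $({\bf P},{\bf H},g)$, and then push forward and identify the image as a BB-homogeneous measure attached to a type-$BB$ parabolic built factorwise over the $\QQ$-simple decomposition via the canonical numbering. The identification of $\pi_*\tilde\mu$ as a BB-homogeneous measure, which you flag as the crux, is handled in the paper at exactly the same level of detail as yours (no further ``bookkeeping'' is carried out there either), so no gap relative to the source.

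One place where your argument goes \emph{beyond} what the paper records is the last paragraph, where you argue for the ``furthermore'' clause on the full sequence by contradiction and invoke ``uniqueness of the associated type-$\mathcal{H}$ group in the Satake representation (and hence in its pushdown).'' Be cautious here: different convergent subsequences of $(\sigma_n)$ may a priori converge to homogeneous measures supported on \emph{different} Satake boundary components with different invariance groups ${\bf H}'$, all pushing forward to the same $\mu$. Uniqueness of the type-$\mathcal{H}$ group attached to the BB pushforward is not automatic --- for instance, when the limit measure on a BB boundary component is atomic, many choices of ${\bf H}'$ can produce it. The paper glosses over this point (it simply extracts one subsequence, asserts the containment, and declares ``the result follows''), so you are not introducing an error relative to the source, but your extra uniqueness claim is stated as if it were a known fact and would need its own justification if you wanted to genuinely close this step rather than merely match the paper's level of rigor.
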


Finally, we show that Conjecture \ref{mainconj} implies Conjecture \ref{bbconj}.

\begin{teo}\label{maintobb}
Consider the situation described in Conjecture \ref{bbconj}. If the conclusion of Conjecture \ref{mainconj} holds (that is, for the homogeneous probability measures on $\overline{\Gamma\backslash X}^S_{\rm max}$ associated with the ${\bf H}_n$ and $g_n$), then the conclusion of Conjecture \ref{bbconj} holds. 
\end{teo}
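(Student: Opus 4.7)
The plan is to transfer the problem to the Satake compactification via the continuous, surjective, $\Gamma$-equivariant map
\begin{align*}
\pi\colon\overline{\Gamma\backslash X}^S_{\rm max}\to\overline{\Gamma\backslash X}^{BB}
\end{align*}
of Proposition \ref{map}, which induces a continuous pushforward $\pi_\ast$ between the corresponding spaces of Borel probability measures. Writing $\tilde\mu_n$ for the homogeneous probability measure on $\overline{\Gamma\backslash X}^S_{\rm max}$ associated with $({\bf H}_n,g_n)$, a direct check of the definitions of Section \ref{main} against those of Section \ref{bor} will show that $\mu_n=\pi_\ast(\tilde\mu_n)$, since both are obtained by pushing a common measure on $\Gamma\backslash G$ forward through compatible quotient maps.

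Because $\overline{\Gamma\backslash X}^S_{\rm max}$ is compact, $\mathcal{P}(\overline{\Gamma\backslash X}^S_{\rm max})$ is weakly compact, so after passing to a subsequence one may assume $\tilde\mu_{n_k}\to\tilde\mu$ weakly. Continuity of $\pi_\ast$, combined with the hypothesis $\mu_n\to\mu$, will then force $\pi_\ast(\tilde\mu)=\mu$. Invoking the assumed validity of Conjecture \ref{mainconj}, $\tilde\mu$ is homogeneous, associated with some triple $({\bf P},{\bf H},g)$ in which ${\bf P}$ is a parabolic of ${\bf G}$, ${\bf H}\subseteq{\bf P}$ is of type $\mathcal{H}$, $g\in P$, and ${\bf H}_{n_k}\subseteq{\bf H}$ for every $k$ large enough.

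Next I would identify $\pi_\ast(\tilde\mu)$ as a homogeneous measure of type $BB$. After reducing to the case where ${\bf G}$ is $\QQ$-simple (the general case being handled factor-by-factor across the $\QQ$-simple constituents of ${\bf G}$), I would write ${\bf P}={\bf P}_J$ in the standard form of Section \ref{standard}, let $b$ be minimal with $I:=\theta(b)\subseteq J$, and put ${\bf P}':={\bf P}_{\alpha_b}$, which is maximal and hence of type $BB$. Minimality of $b$ forces $\alpha_b\notin J$, so $J\subseteq\Delta\setminus\{\alpha_b\}$ and ${\bf P}\subseteq{\bf P}'$; in particular ${\bf H}\subseteq{\bf P}'$ and $g\in P'$. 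By the description of $\pi$ in Section \ref{rel}, its restriction to the Satake boundary component $\Gamma_P\backslash X_P$ factors as the projection $X_P=X_{P_I}\times X_{P_{I'}}\to X_{P_I}$ followed by the inclusion into the Baily--Borel boundary component $\Gamma_{P_I}\backslash X_{P_I}$ attached to ${\bf P}'$. A routine comparison of Haar measures via the rational Langlands decompositions of $P$ and $P'$ and the product decomposition of $X_P$ will identify $\pi_\ast(\tilde\mu)$ with the homogeneous measure on $\overline{\Gamma\backslash X}^{BB}$ associated with $({\bf P}',{\bf H},g)$, yielding the homogeneity of $\mu$.

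The hard part will be upgrading the containment ${\bf H}_{n_k}\subseteq{\bf H}$ from the chosen subsequence to the entire sequence. The natural plan is by contradiction: if some subsequence $(n_j)$ violated ${\bf H}_{n_j}\subseteq{\bf H}$, compactness would furnish a further sub-subsequence with $\tilde\mu_{n_{j_l}}\to\tilde\mu'$, and Conjecture \ref{mainconj} applied to that sub-subsequence would represent $\tilde\mu'$ by some triple $(\tilde{\bf P},\tilde{\bf H},\tilde g)$ with ${\bf H}_{n_{j_l}}\subseteq\tilde{\bf H}$ for $l$ large. The pushforward $\pi_\ast(\tilde\mu')=\mu$ would then yield a second representation of $\mu$ of type $BB$. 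To close the loop one must invoke an essential-uniqueness property of the algebraic subgroup attached to a given homogeneous measure (in the spirit of Theorem \ref{ratner} and the Ratner--Mozes--Shah framework), together with an analysis of the fibres of $\pi$ above a Baily--Borel boundary component, to force $\tilde{\bf H}$ to lie inside a conjugate of ${\bf H}$ and produce the desired contradiction.
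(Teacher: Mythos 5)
Your proposal follows the same route as the paper's proof: use the continuous surjection $\pi\colon\overline{\Gamma\backslash X}^S_{\rm max}\to\overline{\Gamma\backslash X}^{BB}$ of Proposition~\ref{map}, observe $\mu_n=\pi_*\sigma_n$ because $\pi$ is the identity on $\Gamma\backslash X$, extract a convergent subsequence of $(\sigma_n)$ by compactness of the Satake compactification, invoke the assumed conclusion of Conjecture~\ref{mainconj} to represent the limit $\sigma$ by a triple $({\bf P},{\bf H},g)$, deduce $\mu=\pi_*\sigma$, and finally track $\sigma$ through the canonical-numbering/$\theta(b)$ description of Section~\ref{rel} to produce a parabolic of type $BB$ containing ${\bf P}$ and identify $\pi_*\sigma$ as a homogeneous measure of type $BB$. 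All of this matches the paper's argument closely, including the factor-by-factor reduction across the $\QQ$-simple constituents.

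Where you differ is in the last paragraph. You flag the subsequence-to-full-sequence upgrade for the ``${\bf H}_n\subseteq{\bf H}$ for large $n$'' clause as the ``hard part'' and sketch a contradiction argument. The paper's proof does not actually perform this upgrade at all: after extracting the convergent subsequence of $(\sigma_n)$, it records ${\bf H}_n\subseteq{\bf H}$ for large $n$ along that subsequence, pushes forward, and closes with ``The result follows.'' The tacit point the paper is relying on is that the subsequence is arbitrary: any convergent subsequence of $(\sigma_n)$ pushes forward to the \emph{same} limit $\mu$, so the final assertion of Conjecture~\ref{bbconj} is being read as stating the existence of a representation for which the eventual containment holds, with the choice of $({\bf Q},{\bf H},g)$ traced to whichever convergent subsequence one uses. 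Your worry would become a genuine gap only if the conjecture's final clause is read as quantifying over \emph{all} representations of $\mu$, and then it is a gap in the paper as well; as written, the paper treats it informally. The heavy machinery you propose (uniqueness in the Ratner--Mozes--Shah framework plus fibre analysis of $\pi$) is neither used in the paper nor needed for the paper's reading of the statement, so you should not expect to need it. In short: your proof of the main homogeneity claim is correct and matches the paper; your concern about the ``furthermore'' clause reflects a real imprecision in the conjecture's formulation but is handled by the paper simply by working with an arbitrary convergent subsequence, not by the uniqueness argument you outline.
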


\begin{proof}
Let $\sigma_n$ denote the homogeneous probability measure on $\overline{\Gamma\backslash X}^S_{\rm max}$ associated with ${\bf H}_n$ and $g_n$. By Proposition \ref{map}, there exists a continuous surjective map 
\begin{align*}
\pi:\overline{\Gamma\backslash X}^S_{\rm max}\rightarrow\overline{\Gamma\backslash X}^{BB},
\end{align*}
which is the quotient of the map $_{\QQ}\overline{X}^S_{\rm max}\rightarrow X^*$ described in Section \ref{rel}. Since $\pi$ is the identity map on $\Gamma\backslash X$, the homogeneous probability measure $\mu_n$ on $\overline{\Gamma\backslash X}^{BB}$ associated with ${\bf H}_n$ and $g_n$ is equal to $\pi_*\sigma_n$.

Therefore, suppose that there exists a parabolic subgroup ${\bf P}$ of $\bf G$, a connected algebraic subgroup $\bf H$ of ${\bf P}$ of type $\mathcal{H}$ and an element $g\in P$ such that some subsequence of $(\sigma_n)_{n\in\NN}$ converges to the homogeneous probability measure $\sigma$ on $\overline{\Gamma\backslash X}^S_{\rm max}$ associated with $\bf P$, $\bf H$ and $g$. Extract such a subsequence and suppose that ${\bf H}_n$ is contained in $\bf H$ for $n$ large enough. We conclude that $(\mu_n)_{n\in\NN}$ converges to $\pi_*\sigma$. 

As in Section \ref{rel}, for each $i\in\{1,\ldots r\}$, there exists a maximal split torus ${\bf A}_i$ of ${\bf G}_i$, a set $\Delta_i$ of $\QQ$-simple roots of ${\bf G}_i$ with respect to ${\bf A}_i$ with the canonical numbering
\begin{align*}
\Delta_i=\{\alpha_{i,1},\ldots,\alpha_{i,s_i}\},
\end{align*} 
and subsets $I_i=\{\alpha_{i,b_i},\ldots,\alpha_{i,s_i}\}\subseteq J_i\subseteq\Delta_i$ such that ${\bf P}$ is the product of the ${\bf P}_{J_i}$ and the map $_{\QQ}\overline{X}^S_{\rm max}\rightarrow X^*$ is the product of the natural projections
\begin{align*}
X_{P_{J_i}}\rightarrow X_{P_{I_i}}.
\end{align*}

If $I_i$ is not equal to $\Delta_i$, we let ${\bf P}_i:={\bf P}_{\alpha_{i,b_i}}$. Otherwise, we let ${\bf P}_i:={\bf G}_i$. We let $\bf Q$ denote the product of the ${\bf P}_i$, which is a parabolic subgroup of $\bf G$ of type $BB$. Then, as a boundary component in $X^*$, the product of the $X_{P_{I_i}}$ is equal to $e(Q)$. Since $\bf P$ is contained in $\bf Q$, we see that $\pi_*\sigma$ is the homogeneous probability measure on $\overline{\Gamma\backslash X}^{BB}$ associated with $\bf P$, $\bf H$ and $g$. The result follows. 
\end{proof}

\begin{rem}
If $\overline{\Gamma\backslash X}^S_{\tau}$ is another (well-defined) Satake compactification of $\Gamma\backslash X$, then, by \cite{BJ:compactifications}, Proposition III.15.2, there is a continuous surjection
\begin{align*}
\overline{\Gamma\backslash X}^S_{\rm max}\rightarrow\overline{\Gamma\backslash X}^S_{\tau}
\end{align*} 
and the proof of Theorem \ref{maintobb} generalises to $\overline{\Gamma\backslash X}^S_{\tau}$. We direct the reader to \cite{BJ:compactifications}, I.4.39 for the construction.
\end{rem}

\section{The criterion for convergence in $\Gamma\backslash G$}\label{notationP}\label{s4}

\subsection{The $d_{\bf P,K}$ functions}\label{dfunctions} 

Let ${\bf G}$ be a reductive algebraic group and let $K$ be a maximal compact subgroup of $G$. Let $\bf P$ be a proper parabolic subgroup of $\bf G$ and let $n_{\bf P}$ denote the dimension of $\mathfrak{n}_{\bf P}$. Consider the $n_{\bf P}$-th exterior product
\begin{align*}
V_{\bf P}:=\wedge^{n_{\bf P}}\mathfrak{g}
\end{align*}
of $\mathfrak{g}$ and let $L_{\bf P}$ denote the one-dimensional subspace given by $\wedge^{n_{\bf P}}\mathfrak{n}_{\bf P}$. Then $V_{\bf P}$ is a linear representation of $\bf G$ and, since $\bf P$ normalizes $\bf N_P$, $L_{\bf P}$ is a linear representation of $\bf P$. That is, $\bf P$ acts on $L_{\bf P}$ via a character $\chi_{\bf P}$. Clearly, 
\begin{align}\label{chiP}
\chi_{{\bf P}_{|{\bf A_P}}}=\sum_{\alpha\in\Phi(P,A_{\bf P})}n_{\alpha}\alpha,
\end{align}
where $n_{\alpha}$ is the dimension of the corresponding root space in $\mathfrak{g}$.

\begin{lem}\label{fundweights}
Let $\Delta$ be a set of simple $\QQ$-roots of $\bf G$ with respect to $\bf A$. For each $\alpha\in\Delta$, let $\chi_\alpha$ denote the restriction of $\chi_{{\bf P}_\alpha}$ to $\bf A$. Then the $\chi_\alpha$ constitute a set of quasi-fundamental weights in $X^*({\bf A})_\QQ$.
\end{lem}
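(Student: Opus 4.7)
The plan is to show that, under the orthogonal decomposition $X^*({\bf A})_\QQ = X^*({\bf A}^\alpha)_\QQ \oplus X^*({\bf A}_\alpha)_\QQ$ provided by Lemma \ref{orth} (with $I = \Delta \setminus \{\alpha\}$), the character $\chi_\alpha$ lies entirely in the second summand and is, in fact, a strictly positive integer multiple of $\pi_2(\alpha)$. Granting this, both defining properties of a quasi-fundamental weight fall out at once: for $\beta \in \Delta \setminus \{\alpha\}$ one has $\pi_2(\beta) = 0$ by the very definition of ${\bf A}_\alpha$, so orthogonality $(\chi_\alpha, \beta) = 0$ follows from Lemma \ref{orth}; and for $\beta = \alpha$, the pairing reduces to a positive multiple of $(\pi_2(\alpha), \pi_2(\alpha))$, which is positive by positive-definiteness of the scalar product together with $\pi_2(\alpha) \neq 0$.

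First I would verify that $\pi_1(\chi_\alpha) = 0$. The character $\chi_{{\bf P}_\alpha}$ is a $\QQ$-rational character of ${\bf P}_\alpha$ arising from the action on the line $L_{{\bf P}_\alpha}$, so it is trivial on the unipotent radical ${\bf N}_\alpha$ as every character is, and trivial on ${\bf M}_\alpha$ by the defining no-rational-characters property of ${\bf M}_\alpha$ recalled in Section \ref{standard}. Since ${\bf A}^\alpha = ({\bf A} \cap {\bf M}_\alpha)^\circ$ is contained in ${\bf M}_\alpha$, this forces $\chi_\alpha|_{{\bf A}^\alpha} = 1$, i.e.\ $\pi_1(\chi_\alpha) = 0$, so that $\chi_\alpha$ coincides with $\pi_2(\chi_\alpha) \in X^*({\bf A}_\alpha)_\QQ$.

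Next I would identify $\pi_2(\chi_\alpha)$ explicitly using equation (\ref{chiP}), which expresses $\chi_{{\bf P}_\alpha}|_{A_{{\bf P}_\alpha}}$ as a non-negative integer combination of the weights in $\Phi(P_\alpha, A_{{\bf P}_\alpha})$. Each such weight is the restriction to ${\bf A}_\alpha$ of a positive root of ${\bf G}$ with strictly positive $\alpha$-coefficient, and because the other simple roots vanish on ${\bf A}_\alpha$, this restriction is a strictly positive integer multiple of $\pi_2(\alpha)$. Hence $\chi_\alpha = d \cdot \pi_2(\alpha)$ with $d \in \ZZ_{>0}$, and Lemma \ref{orth} yields
\begin{align*}
(\chi_\alpha, \beta) = d \cdot (\pi_2(\alpha), \pi_2(\beta)) \quad \text{for every } \beta \in \Delta,
\end{align*}
which vanishes for $\beta \neq \alpha$ and equals $d \cdot (\pi_2(\alpha), \pi_2(\alpha)) > 0$ for $\beta = \alpha$; the strict positivity needs $\pi_2(\alpha) \neq 0$, which is automatic, since otherwise every simple root would vanish on the nontrivial torus ${\bf A}_\alpha$. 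I do not anticipate a genuine obstacle: the only structural input is the triviality of $\chi_{{\bf P}_\alpha}$ on ${\bf M}_\alpha$, which is directly encoded in the setup of Section \ref{standard}, and everything else reduces to bookkeeping with the orthogonal decomposition.
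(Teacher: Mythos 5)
Your proposal is correct and follows essentially the same route as the paper's proof: both decompose via Lemma \ref{orth} with $I=\Delta\setminus\{\alpha\}$, establish $\pi_1(\chi_\alpha)=0$, identify $\pi_2(\chi_\alpha)$ as a positive multiple of $\pi_2(\alpha)$ using the expression of $\chi_{{\bf P}_\alpha}$ as a non-negative combination of positive roots, and then read off the two defining properties of a quasi-fundamental weight from orthogonality. Your version merely fills in two small points the paper leaves implicit --- the reason $\chi_\alpha$ kills ${\bf A}^\alpha$ (triviality on ${\bf M}_\alpha$, which has no rational characters) and the explicit computation of $\pi_2(\chi_\alpha)$ from (\ref{chiP}) rather than a one-dimensionality appeal.
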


\begin{proof}
Let $\alpha\in\Delta$. By Lemma \ref{orth}, we obtain a decomposition
\begin{align*}
X^*({\bf A})_\QQ=X^*({\bf A}^\alpha)_\QQ\oplus X^*({\bf A}_\alpha)_\QQ
\end{align*}
that is orthogonal with respect to $(\cdot,\cdot)$. Therefore,
\begin{align*}
(\chi_\alpha,\alpha)=(\pi_2(\chi_\alpha),\pi_2(\alpha))=(d_\alpha\pi_2(\alpha),\pi_2(\alpha))=d_\alpha\cdot(\pi_2(\alpha),\pi_2(\alpha)),
\end{align*}
for some $d_\alpha\in\QQ_{>0}$, where the first equality follows from the fact that $\pi_1(\chi_\alpha)=0$ (since ${\bf A}^\alpha$ is contained in the kernel of the character $\chi_\alpha$), and the second equality follows from the fact that $\chi_\alpha$ is a sum of positive roots and $X^*({\bf A}_\alpha)_\QQ$ is one-dimensional. On the other hand, if $\beta\in I$, then
\begin{align*}
(\chi_\alpha,\beta)=(\pi_2(\chi_\alpha),\pi_1(\beta))=0,
\end{align*}
where we use the fact that $\pi_2(\beta)=0$.
\end{proof}

Fix a $K$-invariant norm $\|\cdot\|_{\bf P}$ on $V_{\bf P}\otimes_{\QQ}\RR$ and let $v_{\bf P}\in L_{\bf P}\otimes_{\QQ}\RR$ be such that $\|v_{\bf P}\|_{\bf P}=1$. We obtain a function $d_{{\bf P},K}$ on $G$ defined by
\begin{align*}
d_{{\bf P},K}(g):=\|g\cdot v_{\bf p}\|_{\bf P}.
\end{align*}
Note that, for any $g\in G$, we can write $g=kp$, where $k\in K$ and $p\in P$. Therefore,
\begin{align*}
d_{{\bf P},K}(g)=\|g\cdot v_{\bf p}\|_{\bf P}=\|p\cdot v_{\bf p}\|_{\bf P}=\chi_{\bf P}(p)\cdot\|v_{\bf p}\|_{\bf P}=\chi_{\bf P}(p)
\end{align*}
(note that $\chi_{\bf P}$ is necesarily positive on the connected component $P$).
In particular, $d_{{\bf P},K}$ is a {\it function on $G$ of type $(P,\chi_{\bf P})$}, as defined in \cite{borel:arithmetic-groups}, Section 14.1. Furthermore, it does not depend on the choices of $\|\cdot\|_{\bf P}$ and $v_{\bf P}$. The following lemma will allow us to relate the behaviour of the $\alpha\in\Phi(P,A_{\bf P})$ with the behaviour of $d_{{\bf P},K}$.

\begin{lem}\label{dalpha}
Let $g\in G$. Then
\begin{align*}
d_{{\bf P},K}(g^{-1})=\prod_{\alpha\in\Phi(P,A_{\bf P})}\alpha(a_{{\bf P},K}(g))^{-n_\alpha}.
\end{align*}
\end{lem}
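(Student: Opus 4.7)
The plan is to compute $d_{{\bf P},K}(g^{-1})$ directly by unfolding the definition and evaluating the character $\chi_{\bf P}$ on the rational Langlands decomposition of $g^{-1}$.

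First, write $g=nmak\in N_{\bf P}M_{\bf P}A_{\bf P}K$, so that $a=a_{{\bf P},K}(g)$ by definition. Inverting gives
\begin{align*}
g^{-1}=k^{-1}\cdot a^{-1}m^{-1}n^{-1},
\end{align*}
and setting $k':=k^{-1}\in K$ and $p':=a^{-1}m^{-1}n^{-1}\in P$, we obtain the decomposition $g^{-1}=k'p'$ of the form $K\cdot P$. The identity derived immediately after the definition of $d_{{\bf P},K}$ (namely $d_{{\bf P},K}(kp)=\chi_{\bf P}(p)$) therefore yields $d_{{\bf P},K}(g^{-1})=\chi_{\bf P}(p')$.

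The next step is to evaluate $\chi_{\bf P}$ on the three factors of $p'$. Since $\chi_{\bf P}$ is a character we may treat each factor separately. The factor $n^{-1}\in N_{\bf P}$ contributes trivially because $N_{\bf P}$ is unipotent and $L_{\bf P}$ is one-dimensional, so its action on $L_{\bf P}$ must be trivial. The factor $m^{-1}\in M_{\bf P}$ also contributes trivially, since $M_{\bf P}$ is (the real component of) the algebraic group ${\bf H}_{\bf P}/{\bf N}_{\bf P}$, which inherits from ${\bf H}_{\bf P}$ the property of possessing no rational characters, forcing the restriction of $\chi_{\bf P}$ to $M_{\bf P}$ to be trivial. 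Hence $\chi_{\bf P}(p')=\chi_{\bf P}(a^{-1})$.

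Finally, apply the formula (\ref{chiP}) for the restriction of $\chi_{\bf P}$ to the split torus $A_{\bf P}$:
\begin{align*}
\chi_{\bf P}(a^{-1})=\prod_{\alpha\in\Phi(P,A_{\bf P})}\alpha(a^{-1})^{n_\alpha}=\prod_{\alpha\in\Phi(P,A_{\bf P})}\alpha(a)^{-n_\alpha},
\end{align*}
which is the claimed identity. There is no serious obstacle in this argument; the only mildly delicate point is the triviality of $\chi_{\bf P}$ on $M_{\bf P}$, which must be justified by citing that ${\bf H}_{\bf P}$ has no rational characters (a fact already recorded in Section \ref{ratLang}) rather than by any direct computation.
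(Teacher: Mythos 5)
Your proof is correct and follows essentially the same route as the paper: invert the rational Langlands decomposition of $g$, use left $K$-invariance and triviality of $\chi_{\bf P}$ on the $H_{\bf P}$-part to reduce to $\chi_{\bf P}(a^{-1})$, then apply formula (\ref{chiP}). The only cosmetic difference is that the paper disposes of both the $N_{\bf P}$ and $M_{\bf P}$ factors in one stroke by noting $\chi_{\bf P}$ is trivial on $H_{\bf P} = N_{\bf P}M_{\bf P}$ since ${\bf H_P}$ has no rational characters, whereas you treat the two factors separately (and in doing so you need not pass to the quotient ${\bf H_P}/{\bf N_P}$ — it suffices to observe $M_{\bf P} \subset H_P$).
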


\begin{proof}
First we decompose
\begin{align*}
g=nmak\in N_{\bf P}M_{\bf P}A_{\bf P}K,
\end{align*}
where, by definition, $a=a_{{\bf P},K}(g)$. Therefore, since $d_{{\bf P},K}$ is left $K$-invariant and trivial on $H_{\bf P}$ (because $\bf H_P$ has no rational characters),
\begin{align*}
d_{{\bf P},K}(g^{-1})=d_{{\bf P},K}(a^{-1})=\chi_{\bf P}(a)^{-1}.
\end{align*}
Therefore, the result follows from (\ref{chiP}).
\end{proof}

Now let $\Gamma$ be an arithmetic subgroup of $G_\QQ$. The following property of $d_{{\bf P},K}$ was observed in \cite{DM:uniflows}, Lemma 2.4.

\begin{lem}\label{bounded}
Let $f\in{\bf G}(\QQ)$ and let $g\in G$. Then there exists a constant $c>0$ such that $d_{{\bf P},K}(g\gamma f)\geq c$ for all $\gamma\in\Gamma$.
\end{lem}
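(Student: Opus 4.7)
The plan is to exploit the fact that the line $L_{\bf P}$ is defined over $\QQ$ (because $\mathfrak{n}_{\bf P}$ is, since $\bf P$ is a $\QQ$-parabolic subgroup), combined with the arithmeticity of $\Gamma$, to reduce the claim to the elementary statement that the nonzero elements of a lattice in a real vector space are bounded away from zero.

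First I would fix a nonzero rational vector $v_0\in L_{\bf P}(\QQ)$, so that $v_{\bf P}=\lambda v_0$ for some $\lambda\in\RR\setminus\{0\}$, and then use the fact that $\Gamma$ acts on $V_{\bf P}$ through a rational representation of $\bf G$ to produce a $\ZZ$-lattice $L\subset V_{\bf P}(\QQ)$ that is stable under $\Gamma$. Since $f\in{\bf G}(\QQ)$, the vector $f\cdot v_0$ lies in $V_{\bf P}(\QQ)$, hence in $\tfrac{1}{N}L$ for some positive integer $N$. Because $L$ is $\Gamma$-stable, so is $\tfrac{1}{N}L$, and therefore
\begin{align*}
\gamma f\cdot v_0=\gamma\cdot(f\cdot v_0)\in\tfrac{1}{N}L \quad\text{for every }\gamma\in\Gamma.
\end{align*}

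Next I would observe that, since $\gamma f$ is invertible and $v_0\neq 0$, each vector $\gamma f\cdot v_0$ is nonzero, and the set $\{\gamma f\cdot v_0:\gamma\in\Gamma\}$ is contained in the single discrete subgroup $\tfrac{1}{N}L$ of $V_{\bf P}\otimes_{\QQ}\RR$. Applying the fixed invertible linear map $g$, the image $g\cdot\tfrac{1}{N}L$ is again a (real) lattice in $V_{\bf P}\otimes_{\QQ}\RR$, so its nonzero elements admit a positive lower bound $c'>0$ for the norm $\|\cdot\|_{\bf P}$. Consequently,
\begin{align*}
d_{{\bf P},K}(g\gamma f)=\|g\gamma f\cdot v_{\bf P}\|_{\bf P}=|\lambda|\cdot\|g\gamma f\cdot v_0\|_{\bf P}\geq |\lambda|\cdot c'=:c>0,
\end{align*}
uniformly in $\gamma\in\Gamma$.

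There is no serious obstacle here; the only substantive point is the lattice set-up in the first paragraph, which requires combining the $\QQ$-rationality of $L_{\bf P}$ (so that the distinguished line admits a rational generator) with the existence of a $\Gamma$-stable $\ZZ$-lattice in $V_{\bf P}(\QQ)$ (which follows from the arithmeticity of $\Gamma$ and the fact that ${\bf G}$ acts $\QQ$-linearly on $V_{\bf P}$). Once this is in place, the required bound is immediate from the discreteness of lattices in $V_{\bf P}\otimes_{\QQ}\RR$.
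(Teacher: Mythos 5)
Your proof is correct, and it takes a genuinely different route from the paper. The paper's proof is a short reduction to an external result: it cites \cite{DM:uniflows}, Lemma 2.4, which asserts that the sub-level set $\Gamma_\delta=\{\gamma\in\Gamma:d_{{\bf P},K}(g\gamma f)<\delta\}$ is finite, and then simply takes the minimum over that finite set. Your argument instead unwinds the mechanism to a self-contained lattice computation: $L_{\bf P}$ is a $\QQ$-rational line because $\mathfrak{n}_{\bf P}$ is, $\Gamma$ preserves a lattice $L$ in $V_{\bf P}(\QQ)$ by arithmeticity, rationality of $f$ places $f\cdot v_0$ in $\tfrac{1}{N}L$, and discreteness of the lattice $g\cdot\tfrac{1}{N}L$ gives the uniform lower bound on $\|g\gamma f\cdot v_0\|_{\bf P}$ (the vectors being nonzero since $\gamma f$ is invertible). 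One small observation: your argument yields exactly the positive lower bound required by the lemma, which is formally weaker than the Dani--Margulis finiteness statement (since distinct $\gamma$ differing by an element of $\Gamma\cap f{\bf P}f^{-1}$ produce the same vector $\gamma f\cdot v_0$, the sub-level set need not be finite, only its image in the lattice is); but that is all the lemma needs. The trade-off is the usual one: the paper's version is shorter and offloads the work to a reference, while yours is elementary and self-contained, and makes transparent exactly which rationality and arithmeticity hypotheses are being used.
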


\begin{proof}
Let $\delta>0$. Then, by \cite{DM:uniflows}, Lemma 2.4, the set
\begin{align*}
\Gamma_\delta:=\{\gamma\in\Gamma:d_{{\bf P},K}(g\gamma f)<\delta\}
\end{align*}
is finite. If $\Gamma_\delta$ is empty, we are done. Otherwise, let 
\begin{align*}
c:=\min\{d_{{\bf P},K}(g\gamma f):\gamma\in\Gamma_\delta\}.
\end{align*}	
Since $c>0$, the proof is complete.
\end{proof}

\subsection{The criterion}\label{crit}
Let $\bf G$ be a connected algebraic group with no rational characters and let $\bf L$ be a Levi subgroup of $\bf G$. Then $\bf G$ is the semidirect product of $\bf L$ and ${\bf N}:={\bf N}_{\bf G}$. We denote by $\pi$ the natural (surjective) morphism from ${\bf G}$ to ${\bf L}$.

Let ${\bf P}_0$ be a minimal parabolic subgroup of $\bf L$ and let $\bf A$ be a maximal split subtorus of $\bf L$ contained in ${\bf P}_0$. Let $K$ denote a maximal compact subgroup of $L$ such that $A$ is invariant under the Cartan involution of $G$ associated with $K$. For any proper parabolic subgroup $\bf P$ of $\bf L$, we obtain a function $d_{{\bf P},K}$ on $L$, as defined in Section \ref{dfunctions}, and, for each $\alpha\in\Delta:=\Delta({\bf P}_0,{\bf A})$, we write $d_\alpha:=d_{{\bf P}_\alpha,K}$.

Let $\Gamma$ be an arithmetic subgroup of $G_\QQ$ and let $\Gamma_L:=\pi(\Gamma)$. By \cite{milne:shimura}, Proposition 3.2, $\Gamma_L$ is an arithmetic subgroup of ${\bf L}(\QQ)$. By \cite{borel:arithmetic-groups}, Th\'eor\`eme 13.1, there exists a finite subset $F$ of $L_\QQ$ and a $t>0$ such that
\begin{align*}
L=KA_t\omega F^{-1}\Gamma_L,
\end{align*}
where $\omega$ is a compact subset of $H_{{\bf P}_0}$ and 
\begin{align*}
A_t:=\{a\in A:\alpha(a)\leq t\text{ for all }\alpha\in\Delta\}.
\end{align*}

\begin{defini}
We refer to a set $F$ as above as a $\Gamma$-set for $\bf L$. 
\end{defini}

Fix a $\Gamma$-set $F$ for $\bf L$ and let $\sigma$ denote the Lebesgue measure on $\RR$. We will require the following result due to Dani and Margulis. 

\begin{teo}[Cf. \cite{DM:uniflows}, Theorem 2]\label{GOh}
For any $\epsilon>0$ and $\theta>0$, there exists a compact subset $C:=C(\epsilon,\theta)$ of $\Gamma_L\backslash L$ such that, for every unipotent one-parameter subgroup $\{u(t)\}_{t\in\RR}$ of $L$ and every $l\in L$, either
\begin{align*}
\sigma(\{t\in[0,T]:\Gamma_L l^{-1}u(t)^{-1}\in C\})\geq(1-\epsilon)T
\end{align*}
for all large $T\in\RR$, or there exists $\alpha\in\Delta$ and $\lambda\in\Gamma_L F$ such that $d_\alpha(l\lambda)<\theta$ and
\begin{align*}
l^{-1}u(t)l\in\lambda P_\alpha\lambda^{-1}
\end{align*}
for all $t\in\RR$.
\end{teo}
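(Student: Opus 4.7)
The strategy is the classical Dani--Margulis non-divergence argument, which converts the ergodic-theoretic dichotomy into a polynomial estimate via the algebraicity of unipotent flows and the reduction theory of $\Gamma_L\backslash L$. The compact set $C=C(\epsilon,\theta)$ will be constructed from lower bounds on the $d_\alpha$: using the Siegel decomposition $L=KA_t\omega F^{-1}\Gamma_L$ together with Lemma \ref{dalpha} (which ties small simple-root values on $A_t$ to large values of the $d_\alpha$ on inverses), the sets
\[
C(\eta):=\{\Gamma_L l\in\Gamma_L\backslash L : d_\alpha(l\lambda)\geq\eta\text{ for every }\alpha\in\Delta\text{ and every }\lambda\in\Gamma_L F\}
\]
are relatively compact for each $\eta>0$, with their complements describing the cusp. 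The final $C(\epsilon,\theta)$ will be $C(\eta)$ for $\eta$ chosen at the end depending on both $\epsilon$ and $\theta$.

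The main analytic input is the polynomial nature of the unipotent orbit. For any $l\in L$, $\lambda\in\Gamma_L F$, and $\alpha\in\Delta$, the function
\[
\phi_{\alpha,\lambda}(t):=d_\alpha(lu(t)\lambda)^2 = \|lu(t)\lambda\cdot v_{{\bf P}_\alpha}\|^2_{{\bf P}_\alpha}
\]
is a polynomial in $t$ whose degree is bounded only in terms of $\dim V_{{\bf P}_\alpha}$, since $u(t)$ acts on $V_{{\bf P}_\alpha}$ by unipotent matrices. The Kleinbock--Margulis $(C_0,\kappa)$-good inequality for polynomials then yields, for any interval $J\subset\RR$ and any $\eta>0$,
\[
\sigma\bigl(\{t\in J : \phi_{\alpha,\lambda}(t)<\eta^2\}\bigr)\leq C_0\Bigl(\eta^2/\sup_J\phi_{\alpha,\lambda}\Bigr)^{\kappa}\sigma(J),
\]
with constants $C_0,\kappa>0$ depending only on ${\bf G}$.

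Suppose now the first alternative fails: for arbitrarily large $T$, the orbit $\Gamma_L l^{-1}u(t)^{-1}$ spends more than $\epsilon T$ time outside $C(\eta)$. A pigeonhole over the finite set $\Delta$, combined with a local-finiteness statement for $\Gamma_L F$-witnesses (a quantitative consequence of Lemma \ref{bounded}), produces a single pair $(\alpha,\lambda)$ and a subset of $[0,T]$ of measure $\gtrsim\epsilon T$ on which $\phi_{\alpha,\lambda}(t)<\eta^2$. The good-polynomial inequality then forces $\sup_{[0,T]}\phi_{\alpha,\lambda}$ to be uniformly bounded, independently of $T$; taking $T\to\infty$ along a subsequence that keeps $(\alpha,\lambda)$ fixed, $\phi_{\alpha,\lambda}$ becomes a polynomial bounded on all of $\RR$, hence constant. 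Matching leading coefficients of the vector-valued polynomial $t\mapsto lu(t)\lambda v_{{\bf P}_\alpha}$ forces the vector itself to be constant in $t$, so the corresponding conjugate of $u(t)$ fixes $\lambda v_{{\bf P}_\alpha}$; it therefore normalises $\lambda {\bf N}_{{\bf P}_\alpha}\lambda^{-1}$, and Corollary \ref{norm} gives $l^{-1}u(t)l\in\lambda P_\alpha\lambda^{-1}$, with the smallness $d_\alpha(l\lambda)<\theta$ built into the choice of $\eta\leq\theta$.

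The principal obstacle is the pigeonholing step. Since $\Gamma_L F$ is infinite, one cannot simply count; the correct substitute is the finite-multiplicity of the Siegel covering---at each point of $\Gamma_L\backslash L$, only uniformly boundedly many $\lambda\in\Gamma_L F$ can simultaneously witness smallness of any $d_\alpha$, a quantitative strengthening of Lemma \ref{bounded}. Propagating this multiplicity bound uniformly along an entire unipotent trajectory, in a fashion compatible with extracting a limit as $T\to\infty$ and identifying a single $(\alpha,\lambda)$ that witnesses cusp escape for all times, is the core technical issue, and the reason the theorem genuinely requires the full Dani--Margulis framework rather than a more elementary compactness argument.
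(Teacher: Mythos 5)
You have misread the role of this theorem in the paper. It is labelled ``Cf.~\cite{DM:uniflows}, Theorem 2'' and the paper does not reprove the Dani--Margulis nondivergence theorem; it invokes it as a black box. The actual content of the paper's proof is a reduction: the cited theorem is stated for \emph{semisimple} groups, while the $\bf L$ appearing here is a Levi subgroup, hence merely \emph{reductive}. The paper therefore passes to ${\bf L}^{\ad}={\bf L}/{\bf Z(L)}$, checks that the $d_\alpha$-functions, the $\Gamma$-set $F$, and the unipotent one-parameter subgroups all transport compatibly under $\ad$ (in particular that $d_{\ad^*(\alpha)}(l)=d_\alpha(\ad(l))$), and then argues that the preimage in $\Gamma_L\backslash L$ of a compact subset of $\ad(\Gamma_L)\backslash L^{\ad}$ is compact because the fibre $\Gamma_L\cap Z(L)\backslash Z(L)$ is compact --- the latter using the hypothesis that $\bf G$ has no rational characters, so neither does ${\bf Z(L)}$. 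Your proposal does not mention this reductive-to-semisimple passage at all, yet it is the entire point of the proof as the paper gives it.

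Your proposal is instead a sketch of how one would prove the Dani--Margulis theorem itself, via $(C,\kappa)$-good functions and a covering argument. The individual ingredients you list --- that $t\mapsto\|lu(t)\lambda\cdot v_{{\bf P}_\alpha}\|^2$ is polynomial, the relative compactness of the sublevel sets of the $d_\alpha$, the use of Corollary \ref{norm} to pass from a constant polynomial orbit to containment in $\lambda{\bf P}_\alpha\lambda^{-1}$ --- are all correct in spirit. But you yourself flag that the pigeonhole step is the core technical obstruction and leave it unresolved; as written, the claim that one can ``take $T\to\infty$ along a subsequence that keeps $(\alpha,\lambda)$ fixed'' and conclude the polynomial is bounded on all of $\RR$ is not justified, since the witness $\lambda$ can vary with $T$ and the subsequence is not shown to exist. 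In Dani--Margulis this is handled by a much more delicate local covering and induction on parabolic rank. So there are two issues: you are solving a harder problem than the one the paper actually addresses, and the harder problem is not solved either. If you want to match the paper, the proof to supply is the reduction to ${\bf L}^{\ad}$; if you want to reprove Dani--Margulis, you need to supply the covering argument you currently only gesture at.
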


\begin{proof}
In the case that $\bf L$ is semisimple, \cite{DM:uniflows}, Theorem 2 states that there exists a compact subset $C:=C(\epsilon,\theta)$ of $L/\Gamma_L$ such that, for every unipotent one-parameter subgroup $\{u(t)\}_{t\in\RR}$ of $L$ and every $l\in L$, either
\begin{align*}
\sigma(\{t\in[0,T]:u(t)l\Gamma_L \in C\})\geq(1-\epsilon)T
\end{align*}
for all large $T\in\RR$, or there exists $\alpha\in\Delta$ and $\lambda\in\Gamma_L F$ such that $d_\alpha(l\lambda)<\theta$ and
\begin{align*}
l^{-1}u(t)l\in\lambda P_\alpha\lambda^{-1}
\end{align*}
for all $t\in\RR$. We have a natural homeomorphism
\begin{align*}
\phi:L/\Gamma_L\rightarrow\Gamma_L\backslash L
\end{align*}
defined by $l\Gamma_L\mapsto\Gamma_Ll^{-1}$, and $u(t)l\Gamma_L \in C$ if and only if $\Gamma_L l^{-1}u(t)^{-1}\in \phi(C)$. This concludes the proof in the case that $\bf L$ is semisimple.

Now consider the semisimple group ${\bf L}^\ad$, that is, the quotient of ${\bf L}$ by its centre ${\bf Z}$, which is a torus. We let $\ad$ denote the natural surjective morphism from $\bf L$ to ${\bf L}^{\ad}$. Since every parabolic subgroup of $\bf L$ contains ${\bf Z}$, $\ad$ induces a bijection between parabolic subgroups of $\bf L$ and ${\bf L}^\ad$. In particular, ${\bf P}^\ad_0:=\ad({\bf P}_0)$ is a minimal parabolic subgroup of ${\bf L}^\ad$ and ${\bf A}^\ad:=\ad({\bf A})$ is a maximal split subtorus of ${\bf L}^\ad$ contained in ${\bf P}^\ad_0$. In particular, for each element $\alpha\in\Delta^\ad:=\Delta({\bf P}^\ad_0,{\bf A}^\ad)$, we obtain a maximal parabolic subgroup ${\bf P}_\alpha$ of ${\bf L}^\ad$ and a character $\chi_\alpha$ on ${\bf P}_\alpha$. The restriction of $\ad$ to $\bf A$ yields an embedding
\begin{align*}
\ad^*:X^*({\bf A}^\ad)\rightarrow X^*({\bf A})
\end{align*}
and, since the action of $\bf A$ on $\mathfrak{n}_{{\bf P}_0}$ (which we may identify with $\mathfrak{n}_{{\bf P}^\ad_0}$), factors through ${\bf A}^\ad$, we have $\Delta=\ad^*(\Delta^\ad)$. It follows that, for any $a\in A$ and any $\alpha\in\Delta$,
\begin{align*}
\chi_{\ad^*(\alpha)}(a)=\chi_\alpha(\ad(a)).
\end{align*}

Now let $l\in L$ and let $\ad^*(\alpha)\in\Delta$, for some $\alpha\in\Delta^\ad$. Writing
\begin{align}\label{addecomp}
l=kamn\in KA_{\ad^*(\alpha)}M_{\ad^*(\alpha)}N_{\ad^*(\alpha)}
\end{align}
we have
\begin{align}\label{ddad}
d_{\ad^*(\alpha)}(l)=\chi_{\ad^*(\alpha)}(a)=\chi_\alpha(\ad(a))=d_\alpha(\ad(l)),
\end{align}
where the last equality comes from applying $\ad$ to (\ref{addecomp}).

By \cite{milne:shimura}, Proposition 5.1, the induced maps from $L$ to $L^{\ad}$ and from $A$ to $A^\ad$ are surjective. In particular, $K^{\ad}:=\ad(K)$ is a maximal compact subgroup of $L^{\ad}$. By \cite[Proposition 3.2]{milne:shimura}, $\ad(\Gamma_L)$ is an arithmetic subgroup of $L^\ad_\QQ$. It follows that
\begin{align*}
L^\ad=\ad(L)=K^\ad A^\ad_t\ad(\omega)\ad(F)^{-1}\ad(\Gamma_L).
\end{align*}
That is, $\ad(F)$ is a $\ad(\Gamma_L)$-set for ${\bf L}^\ad$. Therefore, fix $\epsilon>0$ and $\theta>0$, and let $C^\ad$ denote the compact subset of $\ad(\Gamma_L)\backslash L^\ad$ afforded to us by Theorem \ref{GOh}. We claim that the preimage $C$ of $C^\ad$ under the natural map
\begin{align*}
\Gamma_L\backslash L\rightarrow \ad(\Gamma_L)\backslash L^\ad
\end{align*}
is compact. This is because the fibre above each point is isomorphic to $\Gamma_L\cap Z(L)\backslash Z(L)$; since $\bf G$ was assumed to have no rational characters, neither does ${\bf Z(L)}$, and so $\Gamma_L\cap Z(L)\backslash Z(L)$ is compact, by \cite{milne:shimura}, Theorem 3.3.

Let $\{u(t)\}_{t\in\RR}$ be a unipotent one-parameter subgroup of $L$. Then $\{\ad(u(t))\}_{t\in\RR}$ is a unipotent one-parameter subgroup of $L^{\ad}$. Let $l\in L$ such that 
\begin{align*}
\sigma(\{t\in[0,T]:\Gamma_L l^{-1}u(t)^{-1}\in C\})<(1-\epsilon)\cdot T
\end{align*}
for arbitrarily large $T\in\RR$. Then
\begin{align*}
\sigma(\{t\in[0,T]:\ad(\Gamma_L)\ad(l)^{-1}\ad(u(t))^{-1}\in C^{\ad}\})<(1-\epsilon)\cdot T
\end{align*}
for arbitrarily large $T\in\RR$. Therefore, by Theorem \ref{GOh}, there exists $\alpha\in\Delta^\ad$ and $\ad(\lambda)\in\ad(\Gamma_L)\ad(F)$ such that $d_\alpha(\ad(l)\ad(\lambda))<\theta$ and
\begin{align*}
\ad(l)^{-1}\ad(u(t))\ad(l)\in\ad(\lambda)P_\alpha\ad(\lambda)^{-1}
\end{align*}
for all $t\in\RR$. Therefore, the result follows from (\ref{ddad}).
\end{proof}

For any connected algebraic subgroup $\bf H$ of $\bf G$ and any $g\in G$, we define
\begin{align*}
\delta_{K,\Delta,F}({\bf H},g):={\rm inf}\{d_\alpha(\pi(g)^{-1}\lambda):\lambda\in\Gamma_LF,\ \alpha\in\Delta,\ {\bf H}\subset{\bf N}\lambda{\bf P}_{\alpha}\lambda^{-1}\}
\end{align*}
(where we take the value to be $\infty$ if the infimum is varying over the empty set).
By Lemma \ref{bounded}, we have $\delta_{K,\Delta,F}({\bf H},g)>0$. Our criterion is the following.

\begin{teo}\label{criterionprop}
For each $n\in\NN$, let ${\bf H}_n$ be a connected algebraic subgroup of $\bf G$ of type $\mathcal{H}$, let $g_n\in G$ and let $\mu_n$ be the homogeneous probability measure on $\Gamma\backslash G$ associated with ${\bf H}_n$ and $g_n$. Assume that
\begin{align*}
\liminf_{n\rightarrow\infty}\ \delta_{K,\Delta,F}({\bf H}_n,g_n)>0.
\end{align*}
Then the set $\{\mu_n\}_{n\in\NN}$ is sequentially compact in $\mathcal{P}(\Gamma\backslash G)$. 
\end{teo}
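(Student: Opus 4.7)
The plan is to establish tightness of $\{\mu_n\}$ in $\mathcal{P}(\Gamma\backslash G)$---which by Prokhorov's theorem is equivalent to sequential compactness---by reducing to a non-divergence statement on $\Gamma_L\backslash L$ that we attack with Theorem~\ref{GOh}. Since $\bf G$ has no rational characters, neither does its unipotent radical $\bf N$, and by a theorem of Borel, $\Gamma\cap N$ is a uniform lattice in $N$. Hence the natural projection $\Gamma\backslash G\to\Gamma_L\backslash L$ has compact fibres and is proper, so tightness of the pushforwards $\{\pi_*\mu_n\}$ in $\mathcal{P}(\Gamma_L\backslash L)$ would suffice. One checks that $\pi_*\mu_n$ is the homogeneous probability measure on $\Gamma_L\backslash L$ associated with $\pi({\bf H}_n)$---still of type $\mathcal{H}$ as a $\QQ$-surjective image---and $l_n:=\pi(g_n)$, with right-invariance group $l_n^{-1}L_nl_n$, where $L_n:=\pi(H_n)$.

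Next I would fix $\varepsilon>0$, choose $\theta$ with $0<\theta<\liminf_n\delta_{K,\Delta,F}({\bf H}_n,g_n)$, and take $C:=C(\varepsilon,\theta)\subseteq\Gamma_L\backslash L$ to be the compact set supplied by Theorem~\ref{GOh}. The aim is to prove $\pi_*\mu_n(C)\geq 1-\varepsilon$ for all $n$ large enough (the remaining finitely many $n$ being absorbed by enlarging $C$, since each $\mu_n$ is individually tight). Apply Theorem~\ref{GOh} at $l=l_n^{-1}$, so that the reference point $\Gamma_L l_n$ lies in the support of $\pi_*\mu_n$: for each unipotent one-parameter subgroup $\{u(t)\}$ of $l_n^{-1}L_nl_n$, either alternative (a) yields non-escape of the orbit of $\Gamma_L l_n$ under right multiplication by $u(t)^{-1}$, or alternative (b) produces $(\alpha,\lambda)\in\Delta\times\Gamma_LF$ satisfying $d_\alpha(l_n^{-1}\lambda)<\theta$ and $l_nu(t)l_n^{-1}\in\lambda{\bf P}_\alpha\lambda^{-1}$ for all $t$.

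If alternative (b) held for every such $u$, then every unipotent one-parameter subgroup $v=l_nu(\cdot)l_n^{-1}$ of $L_n$ would lie in one of only finitely many parabolics $\lambda{\bf P}_\alpha\lambda^{-1}$---finiteness via Lemma~\ref{bounded}---and since the variety of unipotent elements of the type-$\mathcal{H}$ group $L_n$ is irreducible, one such parabolic would contain all of $L_n$. Pulling back through $\pi$ would then give ${\bf H}_n\subseteq{\bf N}\lambda{\bf P}_\alpha\lambda^{-1}$, hence $\delta_{K,\Delta,F}({\bf H}_n,g_n)<\theta$, contradicting the choice of $\theta$. Thus alternative (a) must hold for at least one (in fact, generic) unipotent $u$; Ratner's uniform equidistribution theorem, applied to such a $u$ acting ergodically on the support of $\pi_*\mu_n$, then converts the orbit estimate (a) into the global bound $\pi_*\mu_n(C)\geq 1-\varepsilon$, completing the argument.

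The hard part is precisely this dichotomy management: converting alternative (b) of Theorem~\ref{GOh} for individual one-parameter subgroups into a uniform containment of $\pi({\bf H}_n)$ in a single parabolic, and in parallel isolating a unipotent one-parameter subgroup whose orbits equidistribute with respect to $\pi_*\mu_n$. Both steps rely on the type-$\mathcal{H}$ structure of $L_n$---the first via irreducibility of its unipotent variety combined with the finiteness output of Lemma~\ref{bounded}, and the second via Ratner--Mozes--Shah equidistribution.
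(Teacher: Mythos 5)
Your overall strategy mirrors the paper's: reduce to tightness via Prokhorov, push the measures forward along the proper projection $\Gamma\backslash G\to\Gamma_L\backslash L$, and control non-escape of mass via the Dani--Margulis dichotomy of Theorem~\ref{GOh}. Your choice to fix a single $\theta$ with $0<\theta<\liminf_n\delta_{K,\Delta,F}({\bf H}_n,g_n)$ and absorb the finitely many small $n$ by enlarging $C$ is arguably cleaner than the paper's exhaustion by a sequence $\theta_k\to 0$.

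The genuine divergence---and the place where I see a gap---is in how you manage the dichotomy of Theorem~\ref{GOh}. The paper first invokes Birkhoff's ergodic theorem to \emph{select} a single unipotent one-parameter subgroup $\{u_n(t)\}\subset l_n^{-1}L_nl_n$ whose trajectory $\{\Gamma_Ll_nu_n^{-1}(t)\}$ equidistributes with respect to $\pi_*\mu_n$, and only then applies the dichotomy to that one $u_n$. In the bad case~(b), the equidistribution (hence density) of this trajectory in $\Gamma_L\backslash\Gamma_LL_nl_n$, together with the closedness of $\Gamma_L\backslash\Gamma_L\lambda_n^{-1}P_{\alpha_n}\lambda_n$ (closed because ${\bf P}_{\alpha_n}$ is a $\QQ$-parabolic), forces ${\bf L}_n\subset\lambda_n^{-1}{\bf P}_{\alpha_n}\lambda_n$. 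No irreducibility of any nilpotent variety is invoked. You instead apply the dichotomy to \emph{every} unipotent one-parameter subgroup, and to rule out ``(b) for all $u$'' you appeal to irreducibility of the variety of unipotent elements of $L_n$. This is the step I would push back on. The statement you need is: if the set of \emph{real} nilpotent elements of $\mathfrak{l}_n(\RR)$ is contained in a finite union of the $\QQ$-subspaces $\operatorname{Ad}(\lambda)\mathfrak{p}_\alpha\cap\mathfrak{l}_n$, then it lies in one of them. Over $\CC$ the nilpotent cone of a type-$\mathcal{H}$ Lie algebra is indeed irreducible, but the dichotomy of Theorem~\ref{GOh} only hands you real one-parameter subgroups, and the locus of real nilpotents $\mathcal{N}(\mathfrak{l}_n)(\RR)$ need not be Zariski dense in the complex nilpotent cone (it fails already for non-quasi-split real forms, where the regular nilpotent orbit carries no real point). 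Without that density, covering $\mathcal{N}(\mathfrak{l}_n)(\RR)$ by finitely many proper Zariski-closed subsets does not force containment in a single one, and your claim is unsupported as written. A second, smaller, issue: your contradiction only yields ``alternative (a) holds for at least one $u$,'' and the parenthetical upgrade to ``generic'' is asserted rather than proved; but for the Birkhoff/Ratner step you need alternative (a) for a $u$ whose orbit from $\Gamma_Ll_n$ actually equidistributes with respect to $\pi_*\mu_n$, which ``one $u$'' does not guarantee. Both difficulties evaporate in the paper's version because the equidistributing $u_n$ is fixed in advance, so the (b)-branch is settled by density of its orbit rather than by a covering argument over all unipotent directions. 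If you want to retain your ``all $u$'' framing you would need a separate argument that the set of $u$ falling into alternative~(b) is Zariski closed and proper inside an irreducible parameter space for real unipotent one-parameter subgroups; as things stand that is exactly the claim that is in doubt.
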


\begin{proof}
By Prokhorov's Theorem, it suffices to show that the set of measures $\{\mu_n\}_{n\in\NN}$ is tight on $\Gamma\backslash G$. That is, for every $\epsilon>0$, there exists a compact subset $C$ of $\Gamma\backslash G$ such that
\begin{align*}
\mu_n(C)\geq 1-\epsilon,\text{ for all }n\in\NN.
\end{align*} 
By abuse of notation, denote also by $\pi$ the natural surjection from $\Gamma\backslash G$ to $\Gamma_L\backslash L$ and suppose that the set $\{\pi_*(\mu_n)\}_{n\in\NN}$ of pushforward mesures were tight on $\Gamma_L\backslash L$. By definition, for any $\epsilon>0$, there exists a compact set $C_L$ of $\Gamma_L\backslash L$ such that 
\begin{align*}
\pi_*(\mu_n)(C_L)\geq 1-\epsilon,\text{ for all }n\in\NN.
\end{align*}
Since arithmetic quotients of unipotent algebraic groups are compact, it follows, as before, that $\pi^{-1}(C_L)$ is compact. Since
\begin{align*}
\mu_n(\pi^{-1}(C_L))=\pi_*(\mu_n)(C_L)\geq 1-\epsilon,\text{ for all }n\in\NN,
\end{align*}
we conclude that $\{\mu_n\}_{n\in\NN}$ is tight on $\Gamma\backslash G$. 

Therefore, it suffices to show that $\{\pi_*(\mu_n)\}_{n\in\NN}$ is tight on $\Gamma_L\backslash L$. To that end, fix an $\epsilon>0$. 

For each $n\in\NN$, we let ${\bf L}_n$ and $l_n$ denote $\pi({\bf H}_n)$ and $\pi(g_n)$, respectively, noting that ${\bf L}_n$ is a connected algebraic subgroup of $\bf L$ of type $\mathcal{H}$. We let $\{u_n(t)\}$ be a unipotent one-parameter subgroup of $l^{-1}_nL_nl_n$ such that the trajectory $\{\Gamma_L l_nu^{-1}_n(t)\}_{t\in\RR}$ is uniformly distributed in $\Gamma_L\backslash\Gamma_L L_nl_n$. That is, for any bounded continuous function $f$ on $\Gamma_L\backslash L$, we have
\begin{align}\label{birkoff}
\lim_{T\rightarrow\infty}\ \frac{1}{T}\int^T_0f(\Gamma_L  l_nu^{-1}_n(t))\ dt=\int_{\Gamma_L\backslash L} f\ d\pi_*(\mu_n).
\end{align}
(The existence of such a subgroup is guaranteed by Birkhoff's Ergodic Theorem.) For any $\theta>0$, we are afforded, by Theorem \ref{GOh}, a compact set $C_\theta:=C(\epsilon/2,\theta)$ of $\Gamma_L\backslash L$ such that, for each $n\in\NN$, either
\begin{align}\label{alt1}
\sigma(\{t\in[0,T]:\Gamma_L l_nu^{-1}_n(t)\in C_\theta\})\geq(1-\epsilon/2)T,
\end{align}
for all large $T\in\RR$, or there exists $\alpha\in\Delta$ and $\lambda\in\Gamma_L F$ such that $d_\alpha(l^{-1}_n\lambda)<\theta$ and
\begin{align*}
l_nu_n(t)l^{-1}_n\in\lambda P_\alpha\lambda^{-1}
\end{align*}
for all $t\in\RR$.

If (\ref{alt1}) holds, for $n\in\NN$, it follows immediately from (\ref{birkoff}) that
\begin{align*}
\pi_*(\mu_n)(C_\theta)\geq 1-\epsilon.
\end{align*}
Otherwise, we conclude that there exist $\alpha_n\in\Delta$ and $\lambda_n\in\Gamma_L F$ such that $d_{\alpha_n}(l^{-1}_n\lambda_n)<\theta$ and
\begin{align*}
l_nu_n(t)l^{-1}_n\in\lambda_n P_{\alpha_n}\lambda^{-1}_n
\end{align*}
for all $t\in\RR$. Since ${\bf P}_{\alpha_n}$ is defined over $\QQ$, the subspace $\Gamma_L\backslash\Gamma_L \lambda^{-1}_nP_{\alpha_n}\lambda_n$ is closed in $\Gamma_L\backslash L$ and, since $\Gamma_L\backslash\{\Gamma_Ll_nu^{-1}_n(t)l^{-1}_n\}_{t\in\RR}$ is dense in $\Gamma_L\backslash\Gamma_L L_n$, we conclude that $\Gamma\backslash\Gamma L_n$ is contained in $\Gamma_L\backslash\Gamma_L\lambda^{-1}_nP_{\alpha_n}\lambda_n$. This implies that the Lie algebra of $L_n$ is contained in the Lie algebra of $\lambda^{-1}_n{\bf P}_{\alpha_n}\lambda_n$ and, therefore, ${\bf L}_n$ is contained in $\lambda^{-1}_n{\bf P}_{\alpha_n}\lambda_n$ itself. In particular, ${\bf H}_n$ is contained in ${\bf N}\lambda_n {\bf P}_{\alpha_n}\lambda^{-1}_n$ and
\begin{align*}
\delta_{K,\Delta,F}({\bf H}_n,g_n)<\theta. 
\end{align*}

Therefore, for each $k\in\NN$, let $\theta_k>0$ such that $\theta_k\rightarrow 0$ as $k\rightarrow\infty$. Let $C_k:=C_{\theta_k}$. As explained above, either
\begin{align*}
\pi_*(\mu_n)(C_k)\geq 1-\epsilon,
\end{align*}
for all $n\in\NN$, and we conclude that $\{\pi_*(\mu_n)\}_{n\in\NN}$ is tight on $\Gamma_L\backslash L$, or there exists $n_k\in\NN$ such that
\begin{align*}
\delta_{K,\Delta,F}({\bf H}_{n_k},g_{n_k})<\theta_k. 
\end{align*}
However, the latter contradicts the assumption of the theorem, hence, the proof is complete.
\end{proof}

\section{The criterion for convergence in $\overline{\Gamma\backslash X}^S_{\rm max}$}\label{s5}

Let $\bf G$ denote a semisimple algebraic group of adjoint type and let $K$ denote a maximal compact subgroup of $G$. Let $X$ denote the symmetric space $G/K$ and let $\Gamma$ denote an arithmetic subgroup of $G_\QQ$. Our criterion is as follows.

\begin{teo}\label{convergencecriterion}
For each $n\in\NN$, let ${\bf H}_n$ denote a connected algebraic subgroup of $\bf G$ of type $\mathcal{H}$, let $g_n$ denote an element of $G$ and let $\mu_n$ denote the homogeneous probability measure on $\overline{\Gamma\backslash X}^S_{\rm max}$ associated with ${\bf H}_n$ and $g_n$.

Suppose that there exists a parabolic subgroup ${\bf P}$ of $\bf G$ such that,
\begin{enumerate}
\item[(i)] for all $n\in\NN$, ${\bf H}_n$ is contained in $\bf P$,
\item[(ii)] we can write
\begin{align*}
g_n=p_na_nk_n\in PA_{\bf P}K,
\end{align*}	
such that
\begin{align*}
\alpha(a_n)\rightarrow\infty,\text{ as }n\rightarrow\infty,\text{ for all }\alpha\in\Phi(P,A_{\bf P}),
\end{align*}
and, 
\item[(iii)] if we denote by $\nu_n$ the homogeneous probability measure on $\Gamma_P\backslash P$ associated with ${\bf H}_n$ and $p_n$, then $(\nu_n)_{n\in\NN}$ converges to $\nu\in\mathcal{P}(\Gamma_P\backslash P)$.
\end{enumerate}

Then there exists a connected algebraic subgroup $\bf H$ of ${\bf P}$ of type $\mathcal{H}$ and an element $g\in P$ such that $(\mu_n)_{n\in\NN}$ converges to the homogeneous probability measure on $\overline{\Gamma\backslash X}^S_{\rm max}$ associated with $\bf P$, $\bf H$ and $g$, and, furthermore, ${\bf H}_n$ is contained in ${\bf H}$ for $n$ large enough.
\end{teo}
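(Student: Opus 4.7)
The plan is to compare $\mu_n$ with a family of auxiliary measures supported on the boundary component $\Gamma_P\backslash X_P$. Let $\pi_P\colon\Gamma_P\backslash P\to\Gamma_P\backslash X_P\hookrightarrow\overline{\Gamma\backslash X}^S_{\rm max}$ denote the canonical continuous map coming from the $P$-action on $X_P$ described in Section \ref{s2.5}, and set $\bar\nu_n:=(\pi_P)_*\nu_n$ and $\bar\nu:=(\pi_P)_*\nu$. By continuity of $\pi_P$ and hypothesis (iii), $\bar\nu_n\to\bar\nu$ in $\mathcal{P}(\overline{\Gamma\backslash X}^S_{\rm max})$. If I can also show $\mu_n-\bar\nu_n\to 0$ weakly, then $\mu_n\to\bar\nu$, and it will remain only to verify that $\bar\nu$ has the required homogeneous form.

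To identify the structure of $\nu$, I would apply Theorem \ref{ratner} in the parabolic setting $\Gamma_P\backslash P$. Since every unipotent one-parameter subgroup of $P$ lies in ${\bf H_P}$, the Ratner--Mozes--Shah classification reduces to the standard reductive case and yields a connected algebraic subgroup ${\bf H}$ of ${\bf P}$ of type $\mathcal{H}$ and an element $g\in P$ such that $\nu$ is the homogeneous measure on $\Gamma_P\backslash P$ associated with $({\bf H},g)$, and such that ${\bf H}_n\subset{\bf H}$ for $n$ sufficiently large. By the definition in Section \ref{main}, $\bar\nu=(\pi_P)_*\nu$ is then precisely the homogeneous probability measure on $\overline{\Gamma\backslash X}^S_{\rm max}$ associated with $({\bf P},{\bf H},g)$.

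For the main technical step, I would write $\mu_n=(\Psi_n)_*\nu_n$ for the map $\Psi_n\colon\Gamma_Pq\mapsto\Gamma qa_nK$ (using $k_n\in K$) and $\bar\nu_n=\Phi_*\nu_n$ for the $n$-independent map $\Phi:=\pi_P$, so that for every $f\in C(\overline{\Gamma\backslash X}^S_{\rm max})$,
\begin{align*}
\int f\,d\mu_n-\int f\,d\bar\nu_n=\int_{\Gamma_P\backslash P}(f\circ\Psi_n-f\circ\Phi)\,d\nu_n.
\end{align*}
The geometric heart of the proof is that $\Psi_n\to\Phi$ \emph{uniformly on compact subsets} of $\Gamma_P\backslash P$. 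Given a compact $C\subseteq\Gamma_P\backslash P$, choose a bounded set of representatives and decompose each $q=u_qm_qb_q\in N_{\bf P}M_{\bf P}A_{\bf P}$. Then $qa_nK=u_qm_q(b_qa_n)K$ with $\alpha(b_qa_n)=\alpha(b_q)\alpha(a_n)$; by hypothesis (ii), $\alpha(a_n)\to\infty$ for every $\alpha\in\Phi(P,A_{\bf P})$, while $\alpha(b_q)$ is bounded above and below on the representative set, so $\alpha(b_qa_n)\to\infty$ uniformly in $q$. The description of neighbourhoods of points of $X_P$ in the maximal Satake topology (see \cite{BJ:compactifications}, III.11.2) imposes no constraint on the $N_{\bf P}$-component, so $\Gamma qa_nK\to\pi_P(\Gamma_Pq)$ uniformly on $C$.

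Combining this uniform convergence on compact sets with the tightness of $\{\nu_n\}$ (automatic from weak convergence), an $\epsilon$-splitting of the integral across a large compact subset of $\Gamma_P\backslash P$ and its complement yields $\int(f\circ\Psi_n-f\circ\Phi)\,d\nu_n\to 0$, which, together with $\bar\nu_n\to\bar\nu$, gives $\mu_n\to\bar\nu$. The principal obstacle I expect lies in the careful justification of the asserted convergence in the Satake topology---in particular, verifying that the $N_{\bf P}$-factor $u_q$ genuinely does not obstruct convergence and that this descends correctly to the $\Gamma$-quotient $\overline{\Gamma\backslash X}^S_{\rm max}$. The extension of Ratner--Mozes--Shah to $\Gamma_P\backslash P$ is secondary but will likewise require attention, since $P$ is not reductive and $\Gamma_P$ is not an arithmetic subgroup of ${\bf P}$ in the standard sense.
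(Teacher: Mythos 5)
Your proposal is correct and takes essentially the same route as the paper: identify $\nu$ via the Ratner--Mozes--Shah machinery (Theorem \ref{ratner}, which the paper states for an arbitrary algebraic group with arithmetic lattice, so $\Gamma_P\backslash P$ is already within scope), then use the Satake-topology description from \cite{BJ:compactifications}, III.11.2 to get uniform convergence of $\Gamma_P q\mapsto\Gamma qa_nK$ to $\pi_{\bf P}$ on compact subsets of $\Gamma_P\backslash P$, and finally split the integral across a large compact set. The paper packages the $\epsilon$-splitting with a partition of unity $\phi+\psi=1$ rather than a tightness argument, but this is a cosmetic difference.
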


\begin{proof}
Consider the natural maps
\begin{align*}
\pi:\Gamma\backslash G\rightarrow\overline{\Gamma\backslash X}^S_{\rm max}\ \text{  and }\ \pi_{\bf P}:\Gamma_{P}\backslash P\rightarrow\Gamma_P\backslash X_P\rightarrow\overline{\Gamma\backslash X}^S_{\rm max}
\end{align*} 
as in Section \ref{main}. The measure $\mu_n$ is equal to $\pi_*(\iota_*(\nu_n)\cdot a_n)$, where $\iota$ denotes the natural inclusion
of $\Gamma_P\backslash P$ in $\Gamma\backslash G$.
Furthermore, by Theorem \ref{ratner}, there exists a connected algebraic subgroup $\bf H$ of ${\bf P}$ of type $\mathcal{H}$ and an element $g\in P$ such that $\nu$ is the homogeneous probability measure on $\Gamma_P\backslash P$ associated with ${\bf H}$ and $g$, and ${\bf H}_n$ is contained in $\bf H$ for $n$ large enough. Therefore, it suffices to show that the sequence $(\mu_n)_{n\in\NN}$ converges to the pushforward $\pi_{{\bf P}*}(\nu)$.

To that end, let $f$ be a continuous function on $\overline{\Gamma\backslash X}^S_{\rm max}$ (which is automatically bounded because $\overline{\Gamma\backslash X}^S_{\rm max}$ is compact). Fix an $\epsilon>0$ and let $C$ be a compact subset of $\Gamma_P\backslash P$ such that $\nu_n(C)>1-\epsilon$, for all $n\in\NN$, and $\nu(C)>1-\epsilon$ as well.
Finally, let $\phi$ and $\psi$ be two bounded non-negative continuous functions on $\Gamma_P\backslash P$ such that $\phi+\psi=1$, the set ${\rm supp}(\phi)$ is compact, and the set ${\rm supp}(\psi)$ is contained in the complement of $C$.

We are interested in
\begin{align*}
\int_{\overline{\Gamma\backslash X}^S_{\rm max}}f\ d\pi_*((\iota_*\nu_n)\cdot a_n)&=\int_{\Gamma\backslash\Gamma Pa_n} f\circ\pi\ d\iota_*(\nu_n)\cdot a_n\\
=\int_{\Gamma\backslash\Gamma P} f\circ\pi\circ r_{a_n}\ d\iota_*(\nu_n)&=\int_{\Gamma_P\backslash P} f\circ\pi\circ r_{a_n}\circ\iota\ d\nu_n,
\end{align*}
where $r_{a_n}$ denotes the homeomorphism of $\Gamma\backslash G$ given by multiplication by $a_n$ on the right.
We write the last integral as the sum
\begin{align*}
\int_{\Gamma_P\backslash P} (f\circ\pi\circ r_{a_n}\circ\iota)\phi\ d\nu_n+\int_{\Gamma_P\backslash P} (f\circ\pi\circ r_{a_n}\circ\iota)\psi\ d\nu_n,
\end{align*}
which, by assumption, is equal to
\begin{align*}
\int_{\Gamma_P\backslash P} (f\circ\pi\circ r_{a_n}\circ\iota)\phi\ d\nu_n+O(\epsilon).
\end{align*}

It follows immediately from \cite{BJ:compactifications}, III.11.2 that, for any $p\in P$,
\begin{align*}
\pi(r_{a_n}(\iota(\Gamma_Pp)))\rightarrow\pi_{\bf P}(\Gamma_Pp)
\end{align*} 
uniformly on compact sets, as $n\rightarrow\infty$.
Therefore, as functions on $\Gamma_P\backslash P$, we see that
\begin{align*}
f\circ\pi\circ r_{a_n}\circ\iota \rightarrow f\circ\pi_{\bf P}
\end{align*}
uniformly, as $n\rightarrow\infty$, for all $\Gamma_Pp\in C$. In particular,
\begin{align*}
\int_{\Gamma_P\backslash P} (f\circ\pi\circ r_{a_n}\circ\iota)\phi\ d\nu_n=\int_{\Gamma_P\backslash P} (f\circ\pi_{\bf P})\phi\ d\nu_n+O(\epsilon),
\end{align*}
for sufficiently large $n$.

We have
\begin{align*}
\int_{\Gamma_P\backslash P} (f\circ\pi_{\bf P})\phi\ d\nu_n&=\int_{\Gamma_P\backslash P} (f\circ\pi_{\bf P})\phi\ d\nu+O(\epsilon),
\end{align*}
for sufficiently large $n$, whereas,
\begin{align*}
\int_{\Gamma_P\backslash P} (f\circ\pi_{\bf P})\phi\ d\nu&=\int_{\Gamma_P\backslash P} (f\circ\pi_{\bf P})\ d\nu+O(\epsilon),
\end{align*}
by the definition of $C$. Therefore, since
\begin{align*}
\int_{\Gamma_P\backslash P} (f\circ\pi_{\bf P})\ d\nu&=\int_{\overline{\Gamma\backslash X}^S_{\rm max}} f\ d\pi_{{\bf P}*}(\nu),
\end{align*}
the result follows from the fact that $\epsilon>0$ can be chosen arbitrarily.
\end{proof}

\section{Groups of $\QQ$-rank $0$}\label{s6}

Having established our criteria for convergence, we now move on to proving various cases of Conjecture \ref{mainconj}. 

\begin{teo}\label{rank0}
Conjecture \ref{mainconj} holds when $\bf G$ has $\QQ$-rank $0$ (that is, $\bf G$ is $\QQ$-anisotropic).
\end{teo}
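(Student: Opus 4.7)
The plan is to observe that in this case the Satake compactification is trivial, so the statement reduces directly to Theorem \ref{ratner}.

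First, I would note that if $\bf G$ has $\QQ$-rank $0$, then $\bf G$ contains no proper parabolic $\QQ$-subgroups: indeed, a minimal parabolic $\QQ$-subgroup contains a maximal $\QQ$-split torus, which in the anisotropic case is trivial, forcing the minimal parabolic to coincide with $\bf G$. Consequently, in the definition
\begin{align*}
_{\QQ}\overline{X}^S_{\rm max}=\coprod_{\bf P}X_P
\end{align*}
the only index is ${\bf P}={\bf G}$, for which $X_P=X$. Therefore $\overline{\Gamma\backslash X}^S_{\rm max}=\Gamma\backslash X$, with no boundary components at all.

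Next, by Borel--Harish-Chandra, the $\QQ$-anisotropic assumption also ensures that $\Gamma\backslash G$ is compact, and hence so is $\Gamma\backslash X$. In particular, the projection $\pi:\Gamma\backslash G\to\Gamma\backslash X$ is proper, and the pushforward of a convergent sequence of probability measures on $\Gamma\backslash G$ is convergent on $\Gamma\backslash X=\overline{\Gamma\backslash X}^S_{\rm max}$, with the limit equal to the pushforward of the limit.

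Finally, I would lift the convergence problem to $\Gamma\backslash G$. Given $(\mu_n)_{n\in\NN}$ converging to $\mu$ on $\overline{\Gamma\backslash X}^S_{\rm max}$, pass to a subsequence along which the associated measures $\tilde\mu_n$ on $\Gamma\backslash G$ (the homogeneous measures attached to ${\bf H}_n$ and $g_n$) converge in $\mathcal{P}(\Gamma\backslash G)$; such a subsequence exists by compactness of $\Gamma\backslash G$ and Prokhorov's theorem. By Theorem \ref{ratner}, the limit $\tilde\mu$ is the homogeneous probability measure associated with some connected algebraic subgroup $\bf H$ of $\bf G$ of type $\mathcal{H}$ and some $g\in G$, with ${\bf H}_n\subseteq{\bf H}$ for all large $n$. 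Pushing forward via $\pi$ identifies $\mu=\pi_*\tilde\mu$ as the homogeneous probability measure on $\Gamma\backslash X=\overline{\Gamma\backslash X}^S_{\rm max}$ attached to ${\bf H}$ and $g$ (taking ${\bf P}={\bf G}$ in the statement of the conjecture), and the containment ${\bf H}_n\subseteq{\bf H}$ is preserved. Since every subsequential limit has this form and the subsequence is determined by the convergent sequence $(\mu_n)$, the conclusion of Conjecture \ref{mainconj} holds. There is no real obstacle here: the whole content of the statement is that the $\QQ$-anisotropy hypothesis reduces the problem to the classical setting already handled by Theorem \ref{ratner}.
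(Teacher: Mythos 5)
Your proof is correct, and it follows the same overall skeleton as the paper (observe there are no proper $\QQ$-parabolics, so the compactification is trivial; extract a convergent subsequence of the lifted measures $\tilde\mu_n$ on $\Gamma\backslash G$; apply Theorem~\ref{ratner}; push forward). The one genuine difference is how the convergent subsequence of the $\tilde\mu_n$ is produced. The paper invokes its own general non-divergence criterion, Theorem~\ref{criterionprop}: in the rank-$0$ case $\Delta=\emptyset$, so the infimum defining $\delta_{K,\Delta,F}({\bf H}_n,g_n)$ is over the empty set and equals $\infty$, and the hypothesis of Theorem~\ref{criterionprop} is satisfied vacuously. You instead argue directly that $\QQ$-anisotropy of the semisimple group $\bf G$ forces $\Gamma\backslash G$ to be compact (Borel--Harish-Chandra), so tightness is free and Prokhorov applies. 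Your route is more elementary and self-contained, and it makes the underlying reason for sequential compactness (that the whole space is compact) transparent; the paper's route has the virtue of exercising the uniform machinery of Section~\ref{s4} in its simplest instance, which is then deployed in earnest in the higher-rank cases. Both are valid, and both implicitly rely on the same final bookkeeping (every subsequence of the $\tilde\mu_n$ has a further subsequence converging, via $\pi_*$, to the given $\mu$, which by Theorem~\ref{ratner} pins down a single $\bf H$ and yields the eventual containment ${\bf H}_n\subseteq{\bf H}$).
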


\begin{proof}
In this case, $\bf G$ has only one rational parabolic subgroup, namely, $\bf G$ itself. Therefore, by Theorem \ref{criterionprop}, after possibly extracting a subsequence, the sequence of homogeneous probability measures on $\Gamma\backslash G$ associated with the ${\bf H}_n$ and the $g_n$ is convergent in $\mathcal{P}(\Gamma\backslash G)$. By Theorem \ref{ratner}, the limit measure is the homogeneous probability measure on $\Gamma\backslash G$ associated with a connected algebraic subgroup $\bf H$ of $\bf G$ of type $\mathcal{H}$ and an element $g\in G$, and, furthermore, ${\bf H}_n$ is contained in ${\bf H}$ for all $n$ large enough. We pushforward all measures to $\Gamma\backslash X$ and the theorem follows.
\end{proof}

\section{Groups of $\QQ$-rank $1$}\label{s7}

\begin{teo}\label{rank1}
Conjecture \ref{mainconj} holds when $\bf G$ has $\QQ$-rank $1$.
\end{teo}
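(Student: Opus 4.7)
The plan is to dichotomise using Theorem~\ref{criterionprop}. Let $\Delta=\{\alpha\}$ (a singleton since $\bf G$ has $\QQ$-rank $1$) and fix a $\Gamma$-set $F$ for $\bf G$. After passing to a subsequence, we may assume either (a)~$\liminf_n\delta_{K,\Delta,F}(\bf H_n,g_n)>0$, or (b)~$\delta_{K,\Delta,F}(\bf H_n,g_n)\to 0$. In case~(a), Theorem~\ref{criterionprop} gives sequential compactness of the measures $\mu_{H_n,g_n}$ in $\mathcal{P}(\Gamma\backslash G)$, and Theorem~\ref{ratner} identifies the subsequential limit as homogeneous with the correct inclusion property; pushing forward to $\overline{\Gamma\backslash X}^S_{\rm max}$ via the natural map yields the conclusion of Conjecture~\ref{mainconj} with limit supported on the open stratum $\Gamma\backslash X$.

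In case~(b), by the definition of $\delta_{K,\Delta,F}$ there exist $\lambda_n\in\Gamma F$ with $\bf H_n\subset\lambda_n\bf P_\alpha\lambda_n^{-1}$ and $d_\alpha(g_n^{-1}\lambda_n)\to 0$. Since $F$ is finite, we extract a further subsequence along which $\lambda_n=\gamma_n\lambda$ with $\lambda\in F$ fixed and $\gamma_n\in\Gamma$. By Remark~\ref{freegamma} we may replace $({\bf H}_n,g_n)$ by $(\gamma_n^{-1}{\bf H}_n\gamma_n,\gamma_n^{-1}g_n)$ without altering $\mu_n$, so that ${\bf H}_n\subset{\bf P}:=\lambda{\bf P}_\alpha\lambda^{-1}$ (a fixed rational minimal parabolic) and still $d_\alpha(g_n^{-1}\lambda)\to 0$.

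We then aim to apply Theorem~\ref{convergencecriterion} to this $\bf P$. Condition~(i) is automatic. For~(ii), take the rational Langlands decomposition $g_n=n_nm_na_nk_n$ and set $p_n:=n_nm_n\in H_{\bf P}$. Because $L_{\bf P}=\lambda\cdot L_{{\bf P}_\alpha}$ as lines in $\wedge^{n_{\bf P}}\mathfrak{g}$ and the defining $K$-invariant norm is fixed, $d_{{\bf P},K}(g_n^{-1})$ and $d_\alpha(g_n^{-1}\lambda)$ differ by a fixed positive constant, so $d_{{\bf P},K}(g_n^{-1})\to 0$. Lemma~\ref{dalpha} then forces $\prod_{\beta\in\Phi(P,A_{\bf P})}\beta(a_n)^{n_\beta}\to\infty$; in $\QQ$-rank $1$ every $\beta\in\Phi(P,A_{\bf P})$ is a positive multiple of $\alpha$, giving $\alpha(a_n)\to\infty$. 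For~(iii), the decisive rank-$1$ feature is that $\bf A_{\bf P}$ is itself a maximal split $\QQ$-torus of $\bf G$, hence $\bf M_{\bf P}$ is $\QQ$-anisotropic; consequently $\bf H_{\bf P}=\bf N_{\bf P}\bf M_{\bf P}$ has $\QQ$-anisotropic Levi and unipotent radical, so $\Gamma\cap H_{\bf P}\backslash H_{\bf P}$ is compact. Since $p_n\in H_{\bf P}$ and $\bf H_n\subset\bf H_{\bf P}$, every $\nu_n$ is supported inside the compact image of $\Gamma\cap H_{\bf P}\backslash H_{\bf P}$ in $\Gamma_P\backslash P$, so $\{\nu_n\}$ is tight. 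Extracting once more and invoking Theorem~\ref{ratner} applied to the algebraic group $\bf P$ gives $\nu_n\to\nu$ homogeneous on $\Gamma_P\backslash P$, associated with some ${\bf H}\subset{\bf P}$ of type $\mathcal{H}$ and $g\in P$, with ${\bf H}_n\subset{\bf H}$ eventually. Theorem~\ref{convergencecriterion} then yields the required limit, supported on the single boundary component $\Gamma_P\backslash X_P$.

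The step most likely to require care is the normalisation comparison in~(ii) relating $d_{{\bf P},K}(g_n^{-1})$ to $d_\alpha(g_n^{-1}\lambda)$; once this is in place, the $\QQ$-rank $1$ structure (single simple root, $\QQ$-anisotropic Levi, compact $\Gamma\cap H_{\bf P}\backslash H_{\bf P}$) ensures that every other step follows directly from the criteria already established.
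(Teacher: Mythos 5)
Your proof is correct, and it follows the paper's overall strategy (dichotomy via $\delta_{K,\Delta,F}$, then Theorem~\ref{convergencecriterion}), but two of your technical choices differ in ways worth noting.

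First, after extracting the subsequence with $\lambda_n=\gamma_n\lambda$ ($\lambda\in F$ fixed), you conjugate only by $\gamma_n\in\Gamma$ (invoking Remark~\ref{freegamma}) and then apply Theorem~\ref{convergencecriterion} directly to the conjugated parabolic $\mathbf{P}:=\lambda\mathbf{P}_\alpha\lambda^{-1}$, using the identity $L_{\bf P}=\lambda\cdot L_{{\bf P}_\alpha}$ inside $\wedge^{n_{\bf P}}\mathfrak{g}$ to transfer the decay of $d_\alpha(g_n^{-1}\lambda)$ into decay of $d_{{\bf P},K}(g_n^{-1})$. The paper instead conjugates by the full $\lambda_n=\gamma_n c$, reducing to the \emph{standard} minimal parabolic, which tacitly requires the discussion in Section~\ref{main} showing Conjecture~\ref{mainconj} is stable under replacing $\Gamma$ by $c^{-1}\Gamma c$. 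Your route sidesteps that auxiliary observation at the price of one extra normalisation computation; the paper's route keeps the parabolic standard at the price of changing the lattice. Second, for condition~(iii), the paper reapplies Theorem~\ref{criterionprop} (after observing that $\mathbf{M}_{\bf P}$ is $\QQ$-anisotropic, so $\mathbf{H}_{\bf P}$ has no proper rational parabolics), whereas you note directly that $\mathbf{H}_{\bf P}$ has unipotent radical $\mathbf{N}_{\bf P}$ and $\QQ$-anisotropic reductive quotient, so $\Gamma\cap H_{\bf P}\backslash H_{\bf P}$ is compact by the Godement criterion, and tightness of $\{\nu_n\}$ is immediate since all $\nu_n$ are supported on a fixed compact image. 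Both arguments are valid; yours is more direct at this point, while the paper's formulation keeps the machinery uniform with the rank-$0$ case. One small stylistic caveat: $A_{\bf P}$ as defined is only a real Lie group, not in general the real points of a $\QQ$-torus (the $\QQ$-structure lives on an $N_{\bf P}$-conjugate), so phrasing the key fact as anisotropy of the $\QQ$-group $\mathbf{H}_{\bf P}/\mathbf{N}_{\bf P}$, as you effectively do in the next clause, is the precise statement.
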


\begin{proof}
Consider the situation described in Section \ref{main} and suppose that $\bf G$ has $\QQ$-rank $1$. Let $\bf A$ be a maximal split subtorus of $\bf G$ (so the dimension of $\bf A$ is equal to $1$) and let ${\bf P}$ denote a minimal parabolic subgroup of $\bf G$ containing $\bf A$. The set $\Delta:=\Delta({\bf P},{\bf A})$ contains only one element, which we denote $\alpha$. After possibly replacing $K$, we can and do assume that $A$ is invariant under the Cartan involution of $G$ associated with $K$. 

Let $F$ denote a $\Gamma$-set for $\bf G$. Then, in the notation of Section \ref{crit}, we obtain, for each $n\in\NN$, a positive real number $\delta_{K,\Delta,F}({\bf H}_n,g_n)$. Suppose that
\begin{align}\label{criteriononlevel1}
\liminf_{n\rightarrow\infty}\ \delta_{K,\Delta,F}({\bf H}_n,g_n)>0.
\end{align}
By Theorem \ref{criterionprop}, after possibly extracting a subsequence, the sequence of homogeneous probability measures on $\Gamma\backslash G$ associated with the ${\bf H}_n$ and the $g_n$ is convergent in $\mathcal{P}(\Gamma\backslash G)$, in which case the proof concludes as in the proof of Theorem \ref{rank0}.

Therefore, suppose that (\ref{criteriononlevel1}) does not hold. Since $\bf G$ has $\QQ$-rank $1$, every proper parabolic subgroup of $\bf G$ is minimal. Since the minimal parabolic subgroups of $\bf G$ belong to a single ${\bf G}(\QQ)$-conjugacy class, it follows from \cite{borel:arithmetic-groups}, Proposition 15.6 that every maximal parabolic subgroup of $\bf G$ is conjugate to ${\bf P}$ by an element of $\Gamma F$. Therefore, by Lemma \ref{dalpha}, we can and do extract a subsequence such that, for every $n\in\NN$, there exists $\lambda_n\in\Gamma F$ such that ${\bf H}_n$ is contained in $\lambda_n{\bf P}\lambda^{-1}_n$ and, if 
\begin{align*}
\lambda^{-1}_ng_n=h_na_nk_n\in H_{\bf P}A_{\bf P}K,
\end{align*}
then, 
\begin{align*}
\alpha(a_n)\rightarrow\infty,\text{ as }n\rightarrow\infty.
\end{align*}
Furthermore, after possibly extracting a subsequence, we can and do assume that $\lambda_n=\gamma_n c$, where $\gamma_n\in\Gamma$, and $c\in F$ is fixed. Therefore, we can and do replace ${\bf H}_n$ with $\lambda^{-1}_n{\bf H}_n\lambda_n$ and $g_n$ with $\lambda^{-1}_ng_n$ and relabel them ${\bf H}_n$ and $g_n$, respectively. In particular, ${\bf H}_n$ is contained in ${\bf P}$
and
\begin{align*}
g_n=h_na_nk_n\in H_PA_PK.
\end{align*}
Since ${\bf M}_{\bf P}$ is $\QQ$-anisotropic, it contains no proper parabolics, and we conclude as in the proof of Theorem \ref{rank0} that, after possibly extracting a subsequence, the sequence of homogeneous probability measures on $\Gamma_P\backslash P$ associated with the ${\bf H}_n$ and the $h_n$ is convergent in $\mathcal{P}(\Gamma_P\backslash P)$. Therefore, the theorem follows from Theorem \ref{convergencecriterion}.
\end{proof}

We remark that, when ${\bf G}$ has $\QQ$-rank $1$, every proper parabolic subgroup $\bf P$ is minimal (and maximal). In particular, ${\bf H_P}$ is anisotropic over $\QQ$. It can happen, then, that $M_P$ is compact for all such $\bf P$ and that the boundary components of ${\Gamma\backslash X}^S_{\rm max}$ (except for $\Gamma\backslash X$) are points. In which case, Theorem \ref{rank1} generalises \cite{MS:ergodic}, Corollary 1.3, from a one-point-compactification to a finitely-many-points-compactification.

Unfortunately, it does not seem possible to generalise the above approach to the general case by way of induction. The argument breaks down at the second stage as one moves to a non-maximal parabolic subgroup. This is in some sense because of Lemma \ref{warning}. We proceed to the case ${\bf G}={\bf SL}_3$, in part to explain this problem more explicitly.

\section{The case of ${\bf SL}_3$}\label{s8}

Unable to go beyond the rank $1$ case in full generality, we consider the specific case of ${\bf G}={\bf SL}_3$. Already in this case, one can appreciate the complexity of the general problem, and the obstructions to performing an inductive argument.

\begin{teo}\label{teo8.1}
Conjecture \ref{mainconj} holds when ${\bf G}={\bf SL}_3$, $K={\rm SO}(3)$, and $\Gamma={\bf SL}_3(\ZZ)$.
\end{teo}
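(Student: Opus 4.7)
My plan is to prove Theorem \ref{teo8.1} by iterating the reduction scheme used in the rank-$0$ and rank-$1$ cases (Theorems \ref{rank0} and \ref{rank1}). Since $\mathbf{G}=\mathbf{SL}_3$ has $\mathbb{Q}$-rank $2$, at most two reduction steps are required. Throughout, let $\Delta=\{\alpha_1,\alpha_2\}$ denote the standard simple $\mathbb{Q}$-roots relative to the diagonal torus $\mathbf{A}$ and the upper-triangular Borel $\mathbf{P}_0$, and let $\tilde\mu_n$ denote the homogeneous probability measures on $\Gamma\backslash G$ whose pushforwards are the given $\mu_n$.

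\emph{Step 1 (test for tightness on $\Gamma\backslash G$).} First, I would apply Theorem \ref{criterionprop} with $\mathbf{L}=\mathbf{G}$. If $\liminf_n \delta_{K,\Delta,F}(\mathbf{H}_n,g_n)>0$, the sequence $(\tilde\mu_n)$ is sequentially compact, and Theorem \ref{ratner} identifies any weak limit as a homogeneous measure on $\Gamma\backslash G$; pushing forward to $\overline{\Gamma\backslash X}^S_{\rm max}$ concludes the argument. Otherwise, pass to a subsequence along which the criterion fails. The finiteness of $\Delta$ and $F$ lets us extract, after a pigeonhole, a fixed $\alpha\in\Delta$, a fixed $c\in F$ and elements $\gamma_n\in\Gamma$ so that $\mathbf{H}_n\subseteq \gamma_n c\mathbf{P}_\alpha c^{-1}\gamma_n^{-1}$ and $d_\alpha(g_n^{-1}\gamma_nc)\to 0$. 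Using Remark \ref{freegamma} and the observations after Conjecture \ref{mainconj}, I may replace $\mathbf{H}_n$ and $g_n$ by their conjugates so that $\mathbf{H}_n\subseteq\mathbf{P}:=\mathbf{P}_\alpha$ (a fixed standard maximal parabolic) and, in the rational Langlands decomposition $g_n=h_na_nk_n\in H_{\mathbf{P}}A_{\mathbf{P}}K$, one has $\alpha(a_n)\to\infty$. Because $\mathbf{P}$ is maximal, $\mathbf{A}_{\mathbf{P}}$ is $1$-dimensional, so every $\beta\in\Phi(P,A_{\mathbf{P}})$ is a positive multiple of a single character and hence $\beta(a_n)\to\infty$ for all $\beta\in\Phi(P,A_{\mathbf{P}})$, verifying hypothesis (ii) of Theorem \ref{convergencecriterion}.

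\emph{Step 2 (tightness on $\Gamma_P\backslash P$).} It remains to establish convergence of $\nu_n:=\mu_{\mathbf{H}_n,h_n}$ on $\Gamma_P\backslash P$. I would repeat the analysis, applying Theorem \ref{criterionprop} in the group $\mathbf{H}_{\mathbf{P}}$ (of type $\mathcal{H}$) with its Levi quotient $\mathbf{M}_{\mathbf{P}}\cong\mathbf{SL}_2$ (which is defined over $\mathbb{Q}$ and $\mathbb{Q}$-rank $1$, since $\Gamma=\mathbf{SL}_3(\mathbb{Z})$). If the resulting criterion holds, Ratner and Theorem \ref{convergencecriterion} conclude the proof. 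Otherwise, mirroring the argument of Theorem \ref{rank1} inside $\mathbf{M}_{\mathbf{P}}$, after further extraction and $\Gamma$-conjugation we may assume $\pi(\mathbf{H}_n)$ is contained in a minimal ($=$ Borel) parabolic of $\mathbf{M}_{\mathbf{P}}$, hence $\mathbf{H}_n$ is contained in a minimal parabolic $\mathbf{Q}$ of $\mathbf{G}$ sitting inside $\mathbf{P}$, and the $A^{\mathbf{P}}$-component $a'_n$ of $h_n$ (in the rational Langlands decomposition of $\mathbf{H}_{\mathbf{P}}$ relative to $\mathbf{Q}\cap H_{\mathbf{P}}$) satisfies $(\alpha_{\mathbf{M}})(a'_n)\to\infty$, where $\alpha_{\mathbf{M}}$ is the unique simple root of $\mathbf{M}_{\mathbf{P}}$.

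\emph{Step 3 (verifying the divergence conditions for $\mathbf{Q}$).} The main obstacle is to verify hypothesis (ii) of Theorem \ref{convergencecriterion} with respect to the new parabolic $\mathbf{Q}$: since $\mathbf{A}_{\mathbf{Q}}=\mathbf{A}$, I must check that $\beta(b_n)\to\infty$ for each of the three positive roots $\beta\in\Phi(Q,A)=\{\alpha_1,\alpha_2,\alpha_1+\alpha_2\}$, where $b_n\in A$ is the torus component of $g_n$ relative to $\mathbf{Q}$. This is the concrete manifestation of the obstruction noted in Lemma \ref{warning}: a priori the rank-$1$ divergence inside $\mathbf{M}_{\mathbf{P}}$ might interact negatively with the divergence of $\alpha(a_n)$ arising from Step~1. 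I would resolve this by explicit computation in $\mathbf{SL}_3$. Using the almost-direct decomposition $\mathbf{A}=\mathbf{A}_{\mathbf{P}}\mathbf{A}^{\mathbf{P}}$, one writes $b_n=a_n\cdot a'_n$; the simple root $\alpha\in\{\alpha_1,\alpha_2\}$ defining $\mathbf{P}$ restricts non-trivially and positively to $\mathbf{A}_{\mathbf{P}}$ and trivially to $\mathbf{A}^{\mathbf{P}}$, while the other simple root pulls back (via $\pi_1$) to a positive multiple of the simple root of $\mathbf{M}_{\mathbf{P}}$ on $\mathbf{A}^{\mathbf{P}}$ and restricts trivially to $\mathbf{A}_{\mathbf{P}}$ by the definition of $\mathbf{A}_{\mathbf{P}}$. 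A short case check (two cases, according to whether $\alpha=\alpha_1$ or $\alpha=\alpha_2$) then shows $\alpha_1(b_n),\alpha_2(b_n)\to\infty$, and hence $(\alpha_1+\alpha_2)(b_n)\to\infty$ automatically.

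\emph{Conclusion.} With the divergence conditions verified, Theorem \ref{convergencecriterion} applied to $\mathbf{Q}$ is available, provided we know $(\nu_n^{\mathbf{Q}})$ converges on $\Gamma_{\mathbf{Q}}\backslash Q$. But since $\mathbf{SL}_3$ is $\mathbb{Q}$-split, the Levi $\mathbf{M}_{\mathbf{Q}}$ is $\mathbb{Q}$-anisotropic, so the argument of Theorem \ref{rank0} applies inside $Q$ to deliver convergence to a homogeneous limit. Theorem \ref{convergencecriterion} then yields convergence of $(\mu_n)$ to a homogeneous probability measure supported on the boundary component $\Gamma_Q\backslash X_Q$, with $\mathbf{H}_n$ contained in the limiting subgroup for large $n$. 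The main obstacle throughout is the bookkeeping in Step~3, where the negativity phenomenon of Lemma \ref{warning} has to be ruled out by invoking that the divergence $\alpha(a_n)\to\infty$ from Step~1 dominates and never cancels with the divergence supplied by the rank-$1$ reduction inside $\mathbf{M}_{\mathbf{P}}$.
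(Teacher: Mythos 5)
Your Step 3 contains a concrete error that undoes the whole argument. You assert that the simple root $\alpha$ defining $\mathbf{P}=\mathbf{P}_\alpha$ ``restricts trivially to $\mathbf{A}^{\mathbf{P}}$.'' This is false, and Lemma~\ref{warning} is precisely the warning against it. By definition $\mathbf{A}_{\mathbf{P}}=\mathbf{A}_\alpha=(\ker\beta)^\circ$, so it is $\beta$ (the \emph{other} simple root) that vanishes on $\mathbf{A}_{\mathbf{P}}$; by Lemma~\ref{orth}, $\alpha$ then decomposes with components in both $X^*(\mathbf{A}^{\mathbf{P}})_\QQ$ and $X^*(\mathbf{A}_{\mathbf{P}})_\QQ$, and by Lemma~\ref{warning} its restriction $\pi_1(\alpha)$ to $\mathbf{A}^{\mathbf{P}}$ is a \emph{non-positive} multiple of $\beta|_{\mathbf{A}^{\mathbf{P}}}$. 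Concretely for $\mathbf{SL}_3$ with $\alpha=\alpha_2$, $\beta=\alpha_1$: writing the $\mathbf{A}^{\mathbf{P}}$-component as $b_n=\mathrm{diag}(y_n,y_n^{-1},1)$ one has $\alpha_2(b_n)=y_n^{-1}=\alpha_1(b_n)^{-1/2}\to 0$, while $\alpha_2(a_n)\to\infty$. The behaviour of $\alpha_2(b_na_n)=y_n^{-1}x_n^{3}$ therefore depends on the relative rates of $y_n,x_n\to\infty$ and may tend to $0$, to a finite limit, or to $\infty$. Your claim that a ``short case check'' yields $\alpha_1(b_n),\alpha_2(b_n)\to\infty$ is thus unjustified, and with it the application of Theorem~\ref{convergencecriterion} to the minimal parabolic $\mathbf{Q}$ collapses.

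This is exactly the obstruction that makes $\mathbf{SL}_3$ genuinely harder than the rank-$1$ case, and the paper's proof spends almost all of its length dealing with it. When $\alpha_2(b_na_n)$ does not diverge, one must analyse several sub-cases depending on whether $\mathbf{H}_n\subset\mathbf{N}_\beta$, whether $s_n\in N_\beta$, the behaviour of the Iwasawa/reduction-theory parameters of $s_n\beta_n$ inside $\mathbf{M}_\beta$, and so on. In some of these branches the limiting parabolic turns out to be $\mathbf{P}_\beta$ or $\mathbf{P}_\alpha$ rather than $\mathbf{P}_\emptyset$; in one branch (the case where $\mathbf{H}_n$ lies in the centre of $\mathbf{N}_\emptyset$) the measure escapes to a Dirac mass on a boundary component and a direct argument via the horospherical coordinates of \cite{BJ:compactifications}, III.11.2 is required, since Theorem~\ref{convergencecriterion} alone does not decide the limit. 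None of this appears in your proposal. The two-step reduction you outline is a reasonable first attempt, and Steps 1 and 2 are essentially correct, but the argument cannot close at Step 3 without the detailed case analysis.
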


\begin{proof}

Let $\Delta:=\{\alpha_1,\alpha_2\}$ be the set of simple $\QQ$-roots of $\bf G$ with respect to the maximal diagonal torus $\bf A$, where $\alpha_1$ is defined by
\begin{align*}
{\rm diag}(x,y,(xy)^{-1})\mapsto xy^{-1},
\end{align*}
and $\alpha_2$ is defined by
\begin{align*}
{\rm diag}(x,y,(xy)^{-1})\mapsto y(xy)=xy^2.
\end{align*}
By \cite{borel:arithmetic-groups}, Section 1.10, it suffices in this case to take $F:=\{1\}$.

Suppose that
\begin{align}\label{criteriononG}
\liminf_{n\rightarrow\infty}\ \delta_{K,\Delta,F}({\bf H}_n,g_n)>0.
\end{align}
Then, by Theorem \ref{criterionprop}, the set of measures corresponding to our sequence is sequentially compact in $\mathcal{P}(\Gamma\backslash G)$ and our claim follows with ${\bf P}={\bf G}$ from Theorem \ref{ratner}.

Therefore, suppose that (\ref{criteriononG}) does not hold. By Lemma \ref{dalpha}, we can and do extract a subsequence such that, for some $\alpha\in\Delta$, and for every $n\in\NN$, there exists $\gamma_n\in\Gamma$ such that ${\bf H}_n$ is contained in $\gamma_n{\bf P}_\alpha\gamma^{-1}_n$ and, if we write
\begin{align*}
\gamma^{-1}_ng_n=h_na_nk_n\in H_{\alpha}A_{\alpha}K,
\end{align*}
then
\begin{align*}
\alpha(a_n)\rightarrow\infty,\text{ as }n\rightarrow\infty.
\end{align*}
Therefore, we replace ${\bf H}_n$ with $\gamma^{-1}_n{\bf H}_n\gamma_n$ and $g_n$ with $\gamma^{-1}_ng_n$ and relabel them ${\bf H}_n$ and $g_n$, respectively. That is, ${\bf H}_n$ is contained in ${\bf H}_\alpha$
and
\begin{align*}
g_n=h_na_nk_n\in H_{\alpha}A_{\alpha}K.
\end{align*}
By the symmetry of our arguments, we can and do assume that $\alpha=\alpha_2$. That is,
\begin{align*}
\alpha_2(a_n)\rightarrow\infty,\text{ as }n\rightarrow\infty.
\end{align*}

The elements of ${\bf M}_\alpha\cong\SL_2$ have the form
\begin{align*}
\left(\begin{array}{ccc}
* & * & 0 \\
* & * & 0 \\
0 & 0 & 1
\end{array}\right)
\end{align*}
and the restriction of $\beta:=\alpha_1$ to ${\bf M}_{\alpha}$ yields a set of $\QQ$-simple roots with respect to the maximal torus ${\bf A}^{\alpha}$ of ${\bf M}_\alpha$. We let $K_{\alpha}:=M_{\alpha}\cap K\cong {\rm SO}(2)$ and, again, we can and do choose $F_{\alpha}:=\{1\}$. If
\begin{align}\label{criteriononMalpha}
\liminf_{n\rightarrow\infty}\ \delta_{K_{\alpha},\{\beta\},F_{\alpha}}({\bf H}_n,h_n)>0,
\end{align}
then, by Theorem \ref{criterionprop}, the conditions of Theorem \ref{convergencecriterion} are satisfied with ${\bf P}={\bf P}_\alpha$, and the result follows.

Therefore, suppose that (\ref{criteriononMalpha}) does not hold. We can and do extract a subsequence such that, for every $n\in\NN$, there exists $\gamma_n\in{\bf M}_{\alpha}(\ZZ)$ such that ${\bf H}_n$ is contained in the parabolic subgroup ${\bf N}_{\alpha}\gamma_n{\bf P}^{\alpha}_\emptyset\gamma^{-1}_n$ of ${\bf H}_\alpha$, where ${\bf P}^\alpha_\emptyset$ denotes the standard minimal parabolic subgroup of ${\bf M}_{\alpha}$ whose elements are of the form
\begin{align*}
\left(\begin{array}{ccc}
* & * & 0 \\
0 & * & 0 \\
0 & 0 & 1
\end{array}\right),
\end{align*}
and, if we write
\begin{align*}
\gamma^{-1}_nh_n=s_nb_nl_n\in N_{\alpha}H^{\alpha}_\emptyset\cdot A^{\alpha}\cdot K_{\alpha},
\end{align*}
where ${\bf H}^\alpha_\emptyset:={\bf H}_{{\bf P}^\alpha_{\emptyset}}$, then
\begin{align*}
\beta(b_n)\rightarrow\infty,\text{ as }n\rightarrow\infty.
\end{align*}
Therefore, we replace ${\bf H}_n$ with $\gamma^{-1}_n{\bf H}_n\gamma_n$ and $g_n$ with $\gamma^{-1}_ng_n$ and relabel them ${\bf H}_n$ and $g_n$, respectively. That is, ${\bf H}_n$ is contained in ${\bf N}_\emptyset={\bf N}_\alpha{\bf H}^\alpha_\emptyset$
and
\begin{align*}
g_n=s_n\cdot b_na_n\cdot l_nk_n\in H_\emptyset\cdot A_\emptyset\cdot K,
\end{align*}
where
\begin{align*}
\beta(b_n)\rightarrow\infty\text{ and }\alpha(a_n)\rightarrow\infty,\text{ as }n\rightarrow\infty.
\end{align*}
However (and herein lies the problem), whereas
\begin{align*}
\beta(b_na_n)\rightarrow\infty,\text{ as }n\rightarrow\infty
\end{align*}
(because $\beta(a_n)=1$, for all $n\in\NN$), the behaviour of $\alpha(b_na_n)$ is not clear, because
\begin{align*}
\alpha(b_n)=\beta(b_n)^{-1/2}\rightarrow 0,\text{ as }n\rightarrow\infty.
\end{align*}
Recall that, by Lemma \ref{warning}, the exponent here is necessarily non-positive. Of course, if 
\begin{align*}
\alpha(b_na_n)\rightarrow\infty,\text{ as }n\rightarrow\infty,
\end{align*}
then the result follows from Theorem \ref{convergencecriterion}, with ${\bf P}={\bf P}_\emptyset$, where we use the fact that, since ${\bf N}_\emptyset$ is unipotent, the space ${\bf N}_\emptyset(\ZZ)\backslash N_\emptyset$ is compact. Therefore, we can and do suppose that
\begin{align*}
\alpha(b_na_n)\rightarrow c\in[0,\infty),\text{ as }n\rightarrow\infty.
\end{align*}
It is sufficient, as we do, to restrict to the following cases.

{\bf \underline{Case 1}:} Suppose that, for all $n\in\NN$, ${\bf H}_n$ is not contained in ${\bf N}_{\beta}$. 

We know that ${\bf H}_n$ is contained in 
\begin{align*}
{\bf H}_{\beta}={\bf N}_{\beta}{\bf M}_{\beta},
\end{align*} 
for all $n\in\NN$, and so, by assumption, the projection of $H_n$ to $M_{\beta}$ is $N^{\beta}_\emptyset:=N_{{\bf P}^\beta_\emptyset}$, where ${\bf P}^\beta_\emptyset$ denotes the standard minimal parabolic of ${\bf M}_{\beta}$, whose elements are of the form
\begin{align*}
\left(\begin{array}{ccc}
1 & 0 & 0 \\
0 & * & * \\
0 & 0 & *
\end{array}\right).
\end{align*}
The Bruhat decomposition of ${\bf M}_{\beta}$ (see \cite{borel:arithmetic-groups}, 11.4 (ii)) yields the decomposition
\begin{align*}
{\bf M}_{\beta}(\QQ)={\bf P}^\beta_\emptyset(\QQ)\cup{\bf P}^\beta_\emptyset(\QQ)\eta{\bf P}^\beta_\emptyset(\QQ),
\end{align*}
where the union is disjoint and 
\begin{align*}
\eta:=\left(\begin{array}{ccc}
1 & 0 & 0 \\
0 & 0 & -1 \\
0 & 1 & 0
\end{array}\right)
\end{align*} 
the non-trivial element of the Weyl group.
Therefore, since, by Corollary \ref{norm}, the normaliser of ${\bf N}^\beta_\emptyset$ in ${\bf M}_{\beta}$ is ${\bf P}^\beta_\emptyset$ and, by Lemma \ref{int}, the normaliser of ${\bf P}^\beta_\emptyset$ is ${\bf P}^\beta_\emptyset$ itself, we deduce that the projection of ${\bf H}_n$ to ${\bf M}_{\beta}$ is only contained in one parabolic subgroup of ${\bf M}_{\beta}$ (namely, ${\bf P}^\beta_\emptyset$).

We can write
\begin{align*}
b_na_n=\beta_n\alpha_n\in A^{\beta}A_{\beta}.
\end{align*}
In particular,
\begin{align*}
\alpha(\beta_n)=\alpha(\beta_n\alpha_n)=\alpha(b_na_n)\rightarrow c,\text{ as }n\rightarrow\infty,
\end{align*}
where we use the fact that $\alpha_n\in A_\beta$.
Therefore, if we put $K_\beta:=M_\beta\cap K$ and $F_{\beta}:=\{1\}$, then, by the previous discussion,
\begin{align*}
\liminf_{n\rightarrow\infty}\ \delta_{K_{\beta},\{\alpha\},F_{\beta}}({\bf H}_n,s_n\beta_n)=\lim_{n\rightarrow\infty}\alpha(\beta_n)^{-1}=c^{-1}>0.
\end{align*}
Hence, by Theorem \ref{criterionprop},
the set of probability measures on ${\bf H}_{\beta}(\ZZ)\backslash H_{\beta}$ associated with the ${\bf H}_n$ and $s_n\beta_n$ is sequentially compact. Furthermore, we claim that
\begin{align*}
\beta(\alpha_n)\rightarrow\infty,\text{ as }n\rightarrow\infty.
\end{align*}
To see this, write
\begin{align*}
b_n={\rm diag}(y_n,y^{-1}_n,1)\text{ and }a_n={\rm diag}(x_n,x_n,x^{-2}_n),
\end{align*}
so that 
\begin{align*}
y_n\rightarrow\infty\text{ and }x_n\rightarrow\infty,\text{ as }n\rightarrow\infty.
\end{align*}
Therefore, if
\begin{align*}
\beta_n={\rm diag}(1,w_n,w^{-1}_n)\text{ and }\alpha_n={\rm diag}(z^{-2}_n,z_n,z_n),
\end{align*}
we have $z^{-2}_n=y_nx_n$, which yields
\begin{align}\label{zed}
\beta(\alpha_n)=z^{-3}_n=(y_nx_n)^{3/2}\rightarrow\infty,\text{ as }n\rightarrow\infty.
\end{align}
Therefore, after possibly extracting a subsequence, the result follows in this case from Theorem \ref{convergencecriterion}, with ${\bf P}={\bf P}_{\beta}$.

{\bf \underline{Case 2}:} Suppose that, for all $n\in\NN$, ${\bf H}_n$ is contained in ${\bf N}_{\beta}$.

{\bf\underline{Case 2.1}:} Suppose that $c\in(0,\infty)$.

After possibly extracting a subsequence, the sequence of measures on ${\bf H}_{\beta}(\ZZ)\backslash H_{\beta}$ associated with the ${\bf H}_n$ and $s_n\beta_n$ converges and the result follows in this case, with ${\bf P}={\bf P}_{\beta}$, from Theorem \ref{convergencecriterion}, and the fact demonstrated above that
\begin{align*}
\beta(\alpha_n)\rightarrow\infty,\text{ as }n\rightarrow\infty.
\end{align*}

{\bf \underline{Case 2.2}:} Suppose that $c=0$.

{\bf\underline{Case 2.2.1}:} Suppose that, for all $n\in\NN$, $s_n\in N_{\beta}$.

For all $n\in\NN$, 
\begin{align*}
\eta{\bf H}_n\eta^{-1}\subseteq{\bf N}_{\beta}\subsetneq {\bf N}_{\emptyset}={\bf H}_{\emptyset}
\end{align*}
and
\begin{align*}
\eta s_n\eta^{-1}\in N_{\beta}\subsetneq N_{\emptyset}=H_{\emptyset}.
\end{align*}
Furthermore,
\begin{align*}
\alpha(\eta \beta_n\alpha_n\eta^{-1})=\alpha(\eta \beta_n\eta^{-1})\alpha(\alpha_n)=\alpha(\beta_n)^{-1}\rightarrow\infty,\text{ as }n\rightarrow\infty,
\end{align*}
and
\begin{align*}
\beta(\eta \beta_n\alpha_n\eta^{-1})=\eta(\beta)(\beta_n\alpha_n)=(\beta+\alpha)(\beta_n\alpha_n)=y_nx_n^3\rightarrow\infty,\text{ as }n\rightarrow\infty.
\end{align*}
Since $\eta\in\Gamma\cap K$, the homogeneous probability measure on $\overline{\Gamma\backslash X}^S_{\rm max}$ associated with ${\bf H}_n$ and $g_n$ is equal to the homogeneous probability measure on $\overline{\Gamma\backslash X}^S_{\rm max}$ associated with $\eta{\bf H}_n\eta^{-1}$ and $\eta g_n\eta^{-1}$. Therefore, the result follows in this case, with ${\bf P}={\bf P}_\emptyset$, from Theorem \ref{convergencecriterion}.

{\bf\underline{Case 2.2.2}:} Suppose that, for all $n\in\NN$, $s_n\notin N_{\beta}$. 

By assumption,
\begin{align*}
s_n=\left(\begin{array}{ccc}
1 & * & * \\
0 & 1 & t_n \\
0 & 0 & 1
\end{array}\right),
\end{align*}
where $t_n\in\RR\setminus\{0\}$, and so
\begin{align*}
s_n\beta_n=\left(\begin{array}{ccc}
1 & * & * \\
0 & w_n & t_nw^{-1}_n \\
0 & 0 & w^{-1}_n
\end{array}\right).
\end{align*}
Therefore, by \cite{borel:arithmetic-groups}, Section 1.10, there exist $\gamma_n\in{\bf M}_\beta(\ZZ)$ and $m_n\in K_\beta$ such that 
\begin{align*}
\gamma_ns_n\beta_nm_n=\left(\begin{array}{ccc}
1 & * & * \\
0 & 1 & u_n \\
0 & 0 & 1
\end{array}\right)\left(\begin{array}{ccc}
1 & * & * \\
0 & v_n & 0 \\
0 & 0 & v^{-1}_n
\end{array}\right),
\end{align*}
where $u_n\in[0,1]$ and $v_n\geq t:=2/\sqrt{3}$. 

{\bf\underline{Case 2.2.2.1}:} Suppose that $v_n$ remains bounded, as $n\rightarrow\infty$. 

In this case, we can rewrite
\begin{align*}
\Gamma H_ng_n=\Gamma\cdot \gamma_nH_n\gamma^{-1}_n\cdot \gamma_ns_n\beta_n m_n\cdot \alpha_n \cdot m_n^{-1}l_nk_n
\end{align*}
and the result follows, with ${\bf P}={\bf P}_\beta$, from Theorem \ref{convergencecriterion} since the set of measures on ${\bf H}_{\beta}(\ZZ)\backslash H_{\beta}$ associated with the $\gamma_n{\bf H}_n\gamma^{-1}_n$ and $\gamma_ns_n\beta_nm_n$ is sequentially compact and
\begin{align*}
\beta(\alpha_n)\rightarrow\infty,\text{ as }n\rightarrow\infty.
\end{align*}

{\bf\underline{Case 2.2.2.2}:} Suppose that $v_n\rightarrow\infty$, as $n\rightarrow\infty$. 

If we denote
\begin{align*}
c_n:=\left(\begin{array}{ccc}
1 & 0 & 0 \\
0 & v_n & 0 \\
0 & 0 & v^{-1}_n
\end{array}\right),
\end{align*}
then 
\begin{align*}
\alpha(c_n\alpha_n)=\alpha(c_n)\rightarrow\infty,\text{ as }n\rightarrow\infty
\end{align*}
and we can write
\begin{align*}
\Gamma H_ng_n=\Gamma H_n\cdot \nu_n \cdot c_n\alpha_n \cdot m_n^{-1}l_nk_n,
\end{align*}
where $\nu_n\in N_\emptyset$ and so the result depends on the behaviour of $\beta(c_n\alpha_n)$.

{\bf\underline{Case 2.2.2.2.1}:} Suppose that
\begin{align*}
\beta(c_n\alpha_n)\rightarrow\infty,\text{ as }n\rightarrow\infty.
\end{align*}

In this case, the result follows, with ${\bf P}={\bf P}_\emptyset$, from Theorem \ref{convergencecriterion}. 

{\bf\underline{Case 2.2.2.2.2}:} Suppose that $\beta(c_n\alpha_n)$ converges to a limit $d\in(0,\infty)$. 

We can write
\begin{align*}
c_n\alpha_n=d_ne_n\in A_\alpha A^\alpha,
\end{align*}
and so
\begin{align*}
\beta(e_n)=\beta(d_ne_n)=\beta(c_n\alpha_n)\rightarrow d,\text{ as }n\rightarrow\infty.
\end{align*}
Therefore, after possibly extracting a subsequence, the result follows, with ${\bf P}={\bf P}_\alpha$, from Theorem \ref{convergencecriterion}, since
\begin{align*}
\Gamma H_ng_n=\Gamma H_n\cdot \nu_ne_n\cdot d_n \cdot m_n^{-1}l_nk_n
\end{align*}
and the the set of measures on ${\bf H}_{\alpha}(\ZZ)\backslash H_{\alpha}$ associated with the ${\bf H}_n$ and $\nu_ne_n$ is sequentially compact, whereas, since $d_n$ is a bounded distance from $c_n\alpha_n$,
\begin{align*}
\alpha(d_n)\rightarrow\infty,\text{ as }n\rightarrow\infty.
\end{align*} 

{\bf\underline{Case 2.2.2.2.3}:} Suppose that
\begin{align*}
\beta(c_n\alpha_n)\rightarrow 0,\text{ as }n\rightarrow\infty.
\end{align*}

{\bf\underline{Case 2.2.2.2.3.1}:} Suppose that, for all $n\in\NN$, ${\bf H}_n$ is not contained in the unipotent group whose elements are of the form
\begin{align*}
\left(\begin{array}{ccc}
1 & 0 & * \\
0 & 1 & 0 \\
0 & 0 & 1
\end{array}\right).
\end{align*}
  
That is, as before, the projection of the ${\bf H}_n$ to ${\bf M}_{\alpha}$ is contained in only one parabolic subgroup of ${\bf M}_{\alpha}$, namely, ${\bf P}^\beta_\emptyset$, the standard minimal parabolic, whose elements are of the form
\begin{align*}
\left(\begin{array}{ccc}
* & * & 0 \\
0 & * & 0 \\
0 & 0 & 1
\end{array}\right).
\end{align*}
Therefore, precisely as in Case 1, we deduce that the set of measures on ${\bf H}_{\alpha}(\ZZ)\backslash H_{\alpha}$ associated with the ${\bf H}_n$ and $\nu_ne_n$ is sequentially compact, and the result follows, with ${\bf P}={\bf P}_\alpha$, from Theorem \ref{convergencecriterion} since
\begin{align*}
\alpha(d_n)\rightarrow\infty,\text{ as }n\rightarrow\infty.
\end{align*}

{\bf\underline{Case 2.2.2.2.3.2}:} Suppose that, for all $n\in\NN$, every element of ${\bf H}_n$ is of the form
\begin{align*}
\left(\begin{array}{ccc}
1 & 0 & * \\
0 & 1 & 0 \\
0 & 0 & 1
\end{array}\right).
\end{align*}
In particular, ${\bf H}:={\bf H}_n$ is fixed, and we write $H_0$ for the fundamnetal domain of $H$ whose elements are of the form
\begin{align*}
\left(\begin{array}{ccc}
1 & 0 & u \\
0 & 1 & 0 \\
0 & 0 & 1
\end{array}\right),\text{ where }u\in[0,1].
\end{align*}

Since $H$ is contained in the centre of $N_\emptyset$, we have
\begin{align*}
\Gamma Hg_n=\Gamma H_0 g_n=\Gamma \nu_n(c_n\alpha_n)\cdot(c_n\alpha_n)^{-1}H_0(c_n\alpha_n)\cdot m_n^{-1}l_nk_n.
\end{align*}
Also, after possibly extracting a subsequence, we can and do assume that the sequence of points $\Gamma\nu_nc_n\alpha_nK$ converges to a point 
\begin{align*}
\Gamma_Px_P\in\Gamma_P\backslash X_P\subseteq\overline{\Gamma\backslash X}^S_{\rm max},
\end{align*}
where $\bf P$ is a parabolic subgroup of $\bf G$. We claim, then, that the sequence of measures on $\overline{\Gamma\backslash X}^S_{\rm max}$ associated with $\bf H$ and the $g_n$ converges to the Dirac measure associated with $\Gamma_Px_P$. 

To see this, note that, by \cite{BJ:compactifications}, Theorem III.11.9, there exist $\gamma_n\in\Gamma$ such that
\begin{align*}
\gamma_n\nu_nc_n\alpha_n=(\pi_n,\rho_n,o_n,\mu_n\kappa_n)\in N_{\bf P}\times A_{\bf P}\times\exp\mathfrak{a}^\perp_P\times M_PK
\end{align*}
(where we are using the horospherical decomposition of $G$ with respect to $\bf P$ as in \cite{BJ:compactifications}, III.11.2), where $\mu_nK_P\rightarrow x_P\in X_P$, and 
\begin{align*}
\alpha(\rho_n)\rightarrow\infty,\text{ as }n\rightarrow\infty,\text{ for all }\alpha\in\Phi(P,A_{\bf P}).
\end{align*}
On the other hand, since, by (\ref{zed}),
\begin{align*}
(\beta+\alpha)(c_n\alpha_n)=v_nz^{-3}_n\rightarrow\infty,\text{ as }n\rightarrow\infty,
\end{align*}
every sequence $(\theta_n)_{n\in\NN}$, with $\theta_n\in(c_n\alpha_n)^{-1}H_0(c_n\alpha_n)$ converges to the identity. In particular, the sequence $(\kappa_n\theta_n\kappa_n^{-1})_{n\in\NN}$ also converges to the identity so, if we write
\begin{align*}
\kappa_n\theta_n\kappa_n^{-1}=(\pi'_n,\rho'_n,o'_n,\mu'_n\kappa'_n),
\end{align*}
then the individual components must each converge to the identity. Therefore,
\begin{align*}
\gamma_n\nu_nc_n\alpha_n\theta_nK=\pi_n\rho_no_n\mu_n\kappa_n\theta_n\kappa^{-1}_nK=\pi_n\rho_no_n\mu_n\pi'_n\rho'_no'_n\mu'_nK,
\end{align*}
which we can write as
\begin{align*}
(\pi''_n,\rho_n\rho'_n,o_no'_n,\mu_n\mu'_nK),
\end{align*}
where $\mu_n\mu'_nK\rightarrow x_P\in X_P$, and
\begin{align*}
\alpha(\rho_n\rho'_n)\rightarrow\infty,\text{ as }n\rightarrow\infty,\text{ for all }\alpha\in\Phi(P,A_{\bf P}).
\end{align*}
In particular, by \cite{BJ:compactifications}, III.11.2, any sequence $(\Gamma x_n)_{n\in\NN}$, with $x_n\in Hg_nK$, converges to $\Gamma_Px_P$, from which the claim follows.
\end{proof}

\section{A product of modular curves}\label{s9}

Next, we digress to prove a far simpler case, but one for which there is a Baily-Borel compactification. 

First, we prove a simple lemma. 

\begin{lem}
Let $\bf H$ denote a connected algebraic subgroup of ${\bf SL}_2$ of type $\mathcal{H}$. Then $\bf H$ is either trivial, ${\bf SL}_2$ itself, or $\gamma{\bf N}\gamma^{-1}$, for some $\gamma\in{\bf SL}_2(\ZZ)$, where ${\bf N}$ denotes the unipotent radical of the Borel subgroup $\bf B$ of ${\bf SL}_2$ consisting of upper triangular matrices.
\end{lem}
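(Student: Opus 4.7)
The plan is to classify the connected algebraic $\QQ$-subgroups of $\SL_2$ by dimension, use the type $\mathcal{H}$ hypothesis to eliminate the two intermediate cases, and then conjugate every one-dimensional unipotent $\QQ$-subgroup into $\bf N$ by an element of $\SL_2(\ZZ)$.

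First, I would observe that since $\dim\SL_2=3$, any connected algebraic $\QQ$-subgroup $\bf H$ of $\SL_2$ has dimension $0$, $1$, $2$, or $3$. The extreme cases are immediate: in dimension $0$, $\bf H$ is trivial, and in dimension $3$, $\bf H=\SL_2$. In dimension $2$, $\bf H$ is a Borel subgroup (the only proper parabolic of $\SL_2$), hence solvable, so ${\bf R}_{\bf H}={\bf H}$. But $\bf H$ contains a one-dimensional torus and is therefore not unipotent, so $\bf H$ is \emph{not} of type $\mathcal{H}$. In dimension $1$, $\bf H$ is either a torus or unipotent; in the torus case ${\bf R}_{\bf H}={\bf H}$ is again non-unipotent, and so the only surviving possibility is that $\bf H$ is unipotent (and connected, one-dimensional, $\QQ$-isomorphic to $\GG_a$).

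Next I would show that every one-dimensional unipotent $\QQ$-subgroup $\bf U$ of $\SL_2$ is $\SL_2(\ZZ)$-conjugate to $\bf N$. The normalizer ${\bf N}_{\SL_2}({\bf U})$ is a proper parabolic $\QQ$-subgroup properly containing $\bf U$, hence a Borel subgroup $\bf B'$ defined over $\QQ$, and $\bf U$ is its unipotent radical. Borel subgroups of $\SL_2$ defined over $\QQ$ correspond bijectively to the $\QQ$-points of the flag variety $\PP^1$, i.e.\ to $\PP^1(\QQ)$. Thus it suffices to verify that $\SL_2(\ZZ)$ acts transitively on $\PP^1(\QQ)$: given coprime integers $p,q$, B\'ezout's identity produces $r,s\in\ZZ$ with $ps-qr=1$, so $\bigl(\begin{smallmatrix} p & r \\ q & s \end{smallmatrix}\bigr)\in\SL_2(\ZZ)$ sends $[1:0]$ to $[p:q]$. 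Choosing $\gamma\in\SL_2(\ZZ)$ with $\gamma{\bf B}\gamma^{-1}={\bf B'}$ then gives $\gamma{\bf N}\gamma^{-1}={\bf U}$, as desired.

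The argument is essentially routine; the only substantive step is the transitivity of $\SL_2(\ZZ)$ on $\PP^1(\QQ)$, and this is a standard consequence of B\'ezout. The main conceptual point worth flagging is that, without passing through the normalizer to a Borel, one could only conclude that $\bf U$ is conjugate to $\bf N$ by some element of $\SL_2(\QQ)$; the strict integrality of $\gamma$ is what the transitivity on $\PP^1(\QQ)$ (rather than on all of $\SL_2(\QQ)/B(\QQ)$, which is the same set) delivers for free.
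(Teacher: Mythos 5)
Your proof is correct. The paper's own argument takes a slightly different organizational route: it dichotomizes on whether $\bf H$ is semisimple, and in the non-semisimple case passes through the unipotent radical $\bf N_H$, shows $\bf N_H$ is the unipotent radical of some Borel $\gamma{\bf B}\gamma^{-1}$ with $\gamma\in\SL_2(\ZZ)$, and only at the end invokes the type $\mathcal{H}$ hypothesis to force $\bf H={\bf N}_{\bf H}$ (since $\bf H\subseteq\gamma{\bf B}\gamma^{-1}$ is solvable, type $\mathcal{H}$ makes its radical unipotent and its semisimple quotient trivial). You instead run a clean case split by dimension, eliminating the $2$-dimensional (Borel) and $1$-dimensional torus cases directly against the type $\mathcal{H}$ hypothesis before ever touching integrality. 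The two arguments then converge on the same pivotal fact: that a Borel subgroup of $\SL_2$ defined over $\QQ$ is conjugate to the standard one by an element of $\SL_2(\ZZ)$, not merely of $\SL_2(\QQ)$. The paper asserts this without comment; you actually prove it via transitivity of $\SL_2(\ZZ)$ on $\PP^1(\QQ)$ and B\'ezout, which is worth having. If anything, your write-up could be a touch more explicit that the step from ``$\bf U$ nontrivial unipotent'' to ``${\bf N}_{\SL_2}({\bf U})$ is a Borel'' goes through embedding $\bf U$ in some parabolic as (and hence equal to) its unipotent radical and then applying the normalizer fact (Corollary~\ref{norm} in the paper), but this is a stylistic rather than mathematical gap.
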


\begin{proof}
If $\bf H$ is semisimple, then ${\bf H}={\bf SL}_2$. If $\bf H$ is non-trivial
and not semisimple, then $\bf N_H$ is non-trivial. Furthermore, since $\bf N_H$ is unipotent, it is contained
in some parabolic subgroup of ${\bf SL}_2$. Hence, $\bf N_H$ is the unipotent radical of a Borel subgroup. Since every Borel subgroup of ${\bf SL}_2$ is of the form $\gamma {\bf B}\gamma^{-1}$, for some $\gamma\in{\bf SL}_2(\ZZ)$, we deduce that ${\bf N_H}=\gamma{\bf N}\gamma^{-1}$, for some $\gamma\in{\bf SL}_2(\ZZ)$. Furthermore, by definition, ${\bf H}$ is contained in the normaliser of $\bf N_H$, which is $\gamma{\bf B}\gamma^{-1}$. That is, $\bf H$ is contained in $\gamma{\bf B}\gamma^{-1}={\bf N_H}\gamma{\bf D}\gamma^{-1}$, where $\bf D$ denotes the diagonal torus. Since $\bf H$ is of type $\mathcal{H}$, we conclude that ${\bf H=N_H}=\gamma{\bf N}\gamma^{-1}$.
\end{proof}

\begin{teo}\label{teo9.1}
Let $r\in\NN$. Conjecture \ref{mainconj} holds when ${\bf G}={\bf SL}^r_2$, $K={\rm SO}(2)^r$, and $\Gamma={\bf SL}_2(\ZZ)^r$.
\end{teo}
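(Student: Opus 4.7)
The plan is to induct on $r$. The base case $r=1$ is Theorem~\ref{rank1}. For $r\geq 2$, let $\bf A$ denote the product of the diagonal tori of the factors, $\Delta=\{\alpha_1,\dots,\alpha_r\}$ the standard simple roots (one per factor), and $F=\{e\}$ a $\Gamma$-set (which suffices because $\SL_2(\ZZ)$ has a unique cusp). The essential structural feature of $\SL_2^r$ is that the $r$ factors commute: the maximal parabolic ${\bf P}_{\alpha_i}$ is the direct product of $r-1$ copies of $\SL_2$ with a Borel in position $i$, its Levi decomposition gives ${\bf H}_{\alpha_i}={\bf N}_i\times \SL_2^{r-1}$ as a direct product, and $\Phi(P_{\alpha_i},A_{\alpha_i})=\{\alpha_i\}$ consists of a single root. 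In particular, hypothesis (ii) of Theorem~\ref{convergencecriterion} at the level of ${\bf P}_{\alpha_i}$ reduces to the single condition $\alpha_i(a_n)\to\infty$.

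I would first apply Theorem~\ref{criterionprop} to $({\bf H}_n,g_n)$ with these choices. If $\liminf_n \delta_{K,\Delta,F}({\bf H}_n,g_n)>0$, then subsequential convergence in $\mathcal{P}(\Gamma\backslash G)$ together with Theorem~\ref{ratner} and pushforward to $\overline{\Gamma\backslash X}^S_{\rm max}$ conclude. Otherwise, Lemma~\ref{dalpha} furnishes a subsequence and a fixed $i\in\{1,\dots,r\}$ (say $i=1$) together with $\gamma_n\in\Gamma$ such that ${\bf H}_n\subset\gamma_n{\bf P}_{\alpha_1}\gamma_n^{-1}$ and $\alpha_1(a_n)\to\infty$ in the decomposition $\gamma_n^{-1}g_n=h_na_nk_n$. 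After $\gamma_n$-conjugation, ${\bf H}_n\subset{\bf P}_{\alpha_1}$, and since ${\bf H}_n$ is of type $\mathcal{H}$ it has no rational characters, so ${\bf H}_n\subset{\bf H}_{\alpha_1}={\bf N}_1\times{\bf M}_{\alpha_1}$. The projection $\bar{{\bf H}}_n$ of ${\bf H}_n$ to ${\bf M}_{\alpha_1}\cong\SL_2^{r-1}$ is again of type $\mathcal{H}$ (its radical is unipotent and its semisimple quotient is a quotient of the non-compact semisimple quotient of ${\bf H}_n$), and together with the projection $\bar h_n$ of $h_n$ it provides data to which the inductive hypothesis applies, yielding a parabolic ${\bf Q}'$ of ${\bf M}_{\alpha_1}$, a subgroup ${\bf H}'$ of ${\bf Q}'$ of type $\mathcal{H}$, and $h'\in Q'$ describing the Satake limit in $\overline{\Gamma_{M_{\alpha_1}}\backslash X_{M_{\alpha_1}}}^S_{\rm max}$.

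Setting ${\bf Q}:={\bf B}_1\times{\bf Q}'$, a parabolic of $\bf G$, one has ${\bf H}_n\subset{\bf N}_1\times\bar{{\bf H}}_n\subset{\bf Q}$ for $n$ large. The main obstacle is that Conjecture~\ref{mainconj}, as stated, only outputs Satake-level convergence on $M_{\alpha_1}$, whereas Theorem~\ref{convergencecriterion} for ${\bf P}={\bf Q}$ additionally requires every root in $\Phi(Q,A_{\bf Q})=\{\alpha_1\}\cup\Phi(Q',A_{Q'})$ to send the $A_{\bf Q}$-component of $g_n$ to infinity, and the $Q$-level measures $\nu_n$ on $\Gamma_Q\backslash Q$ to converge. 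I would handle this by strengthening the inductive hypothesis to carry the full output of Theorem~\ref{convergencecriterion} at each step---which the proof of Theorem~\ref{rank1} in fact supplies---or, equivalently, iterate the argument of Theorem~\ref{rank1} factor by factor: at stage $s$, work inside a parabolic ${\bf P}_S$ indexed by $S\subseteq\{1,\dots,r\}$ of size $s$ (Borels in positions $S$, full $\SL_2$ elsewhere) and apply Theorem~\ref{criterionprop} to the $M$-part $\SL_2^{r-s}$; either the criterion succeeds and Theorem~\ref{convergencecriterion} lifts (using compactness of $\Gamma_{N_S}\backslash N_S$), or one enlarges $S$ by a new factor via Lemma~\ref{dalpha}. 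The process must terminate by stage $r$, at which point the $M$-part is trivial and condition (iii) of Theorem~\ref{convergencecriterion} is automatic. Crucially, because $\alpha_1,\dots,\alpha_r$ belong to distinct commuting $\QQ$-simple factors---hence are mutually orthogonal---the positivity $\alpha_{i_j}(a_n)\to\infty$ achieved at earlier stages is preserved through later ones, so the pathology of Lemma~\ref{warning} that forced the extensive case analysis in the ${\bf SL}_3$ proof never arises.
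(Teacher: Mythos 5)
Your proposal is correct, but it takes a genuinely different organizational route from the paper's proof, and the comparison is worth spelling out. The paper argues directly rather than by induction: it first proves a small lemma classifying type-$\mathcal{H}$ subgroups of $\SL_2$ (trivial, unipotent up to $\SL_2(\ZZ)$-conjugacy, or all of $\SL_2$), then projects $\mathbf{H}_n$ to each factor and, after extracting a subsequence, partitions $\{1,\dots,r\}$ into sets $I^s,I^u,I^t$ according to whether the projection is $\SL_2$, unipotent, or trivial, and then further partitions $I^u$ and $I^t$ according to whether the corresponding $d_i(g_n^{-1}\cdot)$ tends to $0$ or stays bounded below. It then assembles, in a single step, the standard parabolic $\mathbf{P}=\mathbf{P}_J$ with $J=I^s\cup I^u_-\cup I^t_-$, verifies sequential compactness on $\Gamma_H\backslash H$ via Theorem~\ref{criterionprop}, and invokes Theorem~\ref{convergencecriterion} once. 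Your iterative version enlarges a Borel-position set $S$ one factor at a time via Lemma~\ref{dalpha} whenever Theorem~\ref{criterionprop} fails, and the correctness of carrying divergence $\alpha_{i_j}(a_n)\to\infty$ from stage to stage rests, as you correctly observe, on the orthogonality of simple roots living in distinct commuting $\SL_2$ factors: the horospherical decomposition of $g_n$ is genuinely factor-by-factor, so the $D$-components in previously assigned Borel positions are literally unchanged by passage to a smaller parabolic (and by the $\Gamma$-conjugations, which act inside $H_{\mathbf{P}^{(s)}}$). This is exactly the structural feature that lets the paper avoid the case analysis needed for $\SL_3$, so you and the paper are exploiting the same key fact; you simply package it as an iteration/induction while the paper packages it as an explicit one-shot classification. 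One mild caution: the naive induction on $r$ via Conjecture~\ref{mainconj} alone does not close, as you note, since the conjecture's conclusion is a Satake-level statement rather than convergence of the $\nu_n$ on $\Gamma_P\backslash P$ needed for condition (iii) of Theorem~\ref{convergencecriterion}; your iterative reformulation, or the equivalent strengthening of the inductive statement to produce the full hypotheses of Theorem~\ref{convergencecriterion} directly, is the right fix, and the paper's proof effectively realizes this strengthening in one step. A small gain of the paper's approach is that it identifies the limiting boundary component explicitly from the partition; a small gain of yours is that the explicit classification lemma for type-$\mathcal{H}$ subgroups of $\SL_2$ becomes unnecessary, since Theorem~\ref{criterionprop} absorbs that bookkeeping.
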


\begin{proof}
For each $i\in I:=\{1,\ldots,r\}$, let $\pi_i:{\bf G}\rightarrow{\bf SL}_2$ denote the projection of $\bf G$ on to its $i^{\rm th}$ factor. After possibly extracting a subsequence, we obtain a partition $I^s\cup I^u\cup I^t$ of $I$ into three disjoint subsets, where
\begin{itemize}
\item $i\in I^s$ if and only if, for all $n\in\NN$, $\pi_i({\bf H}_n)={\bf SL}_2$,
\item $i\in I^u$ if and only if, for all $n\in\NN$, $\pi_i({\bf H}_n)=\gamma_{i,n} {\bf N}\gamma^{-1}_{i,n}$, for some $\gamma_{i,n}\in {\bf SL}_2(\ZZ)$, and
\item $i\in I^t$ if and only if, for all $n\in\NN$, $\pi_i({\bf H}_n)=\{1\}.$
\end{itemize}
For each $n\in\NN$, we let $\gamma_n\in{\SL}_2(\ZZ)^r$ be the element whose $i^{\rm th}$ entry is equal to $\gamma_{i,n}$ if $i\in I^u$ and is trivial otherwise. We replace ${\bf H}_n$ with $\gamma^{-1}_n {\bf H}_n\gamma_n$ and $g_n$ with $\gamma^{-1}_n g_n$ and relabel them ${\bf H}_n$ and $g_n$, respectively. That is, the assertions above on the $\pi_i({\bf H}_n)$ now hold with $\gamma_{i,n}=1$ for all $i\in I^u$ and all $n\in\NN$.

Let $\bf D$ denote the diagonal torus of ${\bf SL}_2$. Then ${\bf A}:={\bf D}^r$ is a maximal split torus of ${\bf G}$ such that $A$ is invariant under the Cartan involution of $G$ associated with $K$. Furthermore, ${\bf P}_0:={\bf B}^r$ is a minimal parabolic subgroup of $\bf G$ (indeed, it is a Borel subgroup) that contains $\bf A$.

We let $\Delta:=\Delta({\bf P}_0,{\bf A})$, which can naturally be indexed by $I$. In particular, for each $i\in I$, we obtain a maximal standard parabolic ${\bf P}_i$ of $\bf G$ by replacing the $i^{\rm th}$ factor of $\bf G$ with a copy of $\bf B$. This exhausts the maximal standard parabolic subgroups. 

For each $i\in I$, we obtain a function $d_i:=d_{{\bf P}_i,K}$ on $G$. As before, $F:=\{1\}$ is a $\Gamma$-set for $\bf G$. Note that the maximal parabolic subgroups of $\bf G$ that contain the ${\bf H}_n$ are those ${\bf P}_i$ for $i\in I^u$ and those 
$\gamma {\bf P}_i\gamma^{-1}$ for $i\in I^t$ and any $\gamma\in\Gamma$. 

After possibly extracting a subsequence, we can define a partition $I^t_+\cup I^t_{-}$ of $I^t$ into two disjoint subsets, where
\begin{itemize}
\item $i\in I^t_+$ if and only if 
\begin{align*}
\inf\{d_i(g^{-1}_n\gamma):\gamma\in{\bf SL}_2(\ZZ)^r\}\rightarrow 0,\text{ as }n\rightarrow\infty,\text{ and}
\end{align*}
\item  $i\in I^t_{-}$ if and only if 
\begin{align*}
\inf\{d_i(g^{-1}_n\gamma):\gamma\in{\bf SL}_2(\ZZ)^r\}\rightarrow c_i\in(0,\infty],\text{ as }n\rightarrow\infty.
\end{align*}
\end{itemize}
Similarly, we can define a partition $I^u_+\cup I^u_-$ of $I^u$ into two disjoint subsets, where
\begin{itemize}
\item $i\in I^u_+$ if and only if $d_i(g^{-1}_n)\rightarrow 0,\text{ as }n\rightarrow\infty$, and
\item $i\in I^u_{-}$ if and only if $d_i(g^{-1}_n)\rightarrow c_i\in(0,\infty],\text{ as }n\rightarrow\infty$.
\end{itemize}
In particular, by Lemma \ref{dalpha}, after possibly replacing ${\bf H}_n$ with $\gamma^{-1}_n{\bf H}_n\gamma_n$ and $g_n$ with $\gamma^{-1}_ng_n$, for some $\gamma_n\in\Gamma$ (with trivial entries outside of the factors in $I^t_+$), and relabelling them ${\bf H}_n$ and $g_n$, respectively, we can write $g_n=(g_{i,n})_{i=1}^r$, where $g_{i,n}\in{\bf SL}_2(\RR)$, such that
\begin{align*}
 g_{i,n}= u_{i,n}a_{i,n}k_{i,n}\in N\cdot D\cdot{\rm SO}(2)
\end{align*}
and
\begin{align*}
\phi(a_{i,n})\rightarrow \infty,\text{ as }n\rightarrow\infty,
\end{align*}
for all $i\in I^t_+\cup I^u_+$, where we denote by $\phi$ the single element of $\Delta({\bf B},{\bf D})$. We can also, without loss of generality, assume that $k_{i,n}=1$, for all $i\in I$ and for all $n\in\NN$. 

We define $h_n=(h_{i,n})_{i=1}^r\in G$ and $\theta_n=(\theta_{i,n})_{i=1}^r\in A$, where
\begin{itemize}
\item $h_{i,n}=g_{i,n}$ and $\theta_{i,n}=1$, for all $i\in I^s\cup  I^u_{-}\cup I^t_{-}$, and
\item $h_{i,n}=u_{i,n}$ and $\theta_{i,n}= a_{i,n}$, for all $i\in I^t_+\cup I^u_+$.
\end{itemize}
That is, for all $i\in I$ and all $n\in \NN$, we have $g_n=h_n\theta_n$. 

Let $J:=I^s\cup  I^u_{-}\cup I^t_{-}$ and consider the standard parabolic subgroup ${\bf P}:={\bf P}_J$ of $\bf G$. By definition,
\begin{align*}
{\bf P}=\prod_{i=1}^r{\bf P}_{J,i},
\end{align*}
where ${\bf P}_{J,i}={\bf B}$ if $i\notin J$ and ${\bf P}_{J,i}={\bf SL}_2$ otherwise. The rational Langlands decomposition with respect to $K$ is $P=N_JA_JM_J$, where
\begin{align*}
N_J=\prod_{i=1}^r N_{J,i},\ A_J=\prod_{i=1}^r A_{J,i},\text{ and }M_J=\prod_{i=1}^r M_{J,i},
\end{align*}
where
\begin{itemize}
\item for $i\notin J$, we have $N_{J,i}=N$, $A_{J,i}=D$ and $M_{J,i}=\{1\}$, and,
\item for $i\in J$, we have $N_{J,i}=\{1\}$, $A_{J,i}=\{1\}$ and $M_{J,i}={\rm SL}_2$.
\end{itemize}
Let ${\bf H}:={\bf H_P}$. Then ${\bf H}_n$ is contained in ${\bf H}$ and $h_n\in H$ for all $n\in \NN$. Let $\Gamma_H:=\Gamma\cap H$. Then, by Theorem \ref{criterionprop}, the set of homogeneous probability measures on $\Gamma_H\backslash H$ associated with the ${\bf H}_n$ and $h_n$ is sequentially compact in $\mathcal{P}(\Gamma_H\backslash H)$. Furthermore, $g_n=h_n\theta_n$, and
\begin{align*}
\alpha(\theta_n)\rightarrow\infty,\text{ as }n\rightarrow\infty,\text{ for all }\alpha\in\Phi(P,A_{\bf P}).
\end{align*}
Therefore, the result follows from Theorem \ref{convergencecriterion}.
\end{proof}

\section{Translates of the Levi of a maximal parabolic subgroup}\label{s12}

We now move on to proving cases of Conjecture \ref{mainconj} in which we impose conditions on the ${\bf H}_n$ and $g_n$ instead of the group $\bf G$.

The title of this section is slightly inaccurate; a Levi subgroup cannot be of type $\mathcal{H}$. Recall that, for any reductive algebraic group $\bf M$, we can write $\bf M$ as the almost direct product ${\bf R}_{\bf M}{\bf M}^\der$, where ${\bf M}^\der$ is the derived subgroup of $\bf M$. Then ${\bf M}^\der$ is a semisimple group and, as such, is equal to the almost direct product of its almost $\QQ$-simple factors. We write ${\bf M}^{\rm nc}$ (respectively, ${\bf M}^{\rm c}$) for the product of those factors whose underlying real Lie groups are non-compact (respectively, compact). In particular, ${\bf M}^{\rm nc}$ is of type $\mathcal{H}$. 

\begin{teo}\label{levi}
Let ${\bf P}_0$ be a minimal parabolic subgroup of $\bf G$ and let $\bf A$ be a maximal split subtorus of $\bf G$ contained in ${\bf P}_0$. Then Conjecture \ref{mainconj} holds when, for all $n\in N$, ${\bf H}_n={\bf M}^{\rm nc}_\alpha$, for some $\alpha\in\Delta:=\Delta({\bf P}_0,{\bf A})$.
\end{teo}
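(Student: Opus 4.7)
\medskip\noindent\textbf{Proof plan.} Since $\Delta$ is finite, after extracting a subsequence we may assume $\alpha_n=\alpha$ is constant, so ${\bf H}:={\bf M}_\alpha^{\rm nc}$ is a fixed semisimple subgroup of $\bf G$ of type $\mathcal{H}$, contained in the Levi of the maximal standard parabolic ${\bf P}_\alpha$. The overall strategy is to apply the dichotomy of Theorem~\ref{criterionprop} to the measures $\mu_{H,g_n}$ on $\Gamma\backslash G$, and, when the criterion fails, to transfer the analysis to a boundary stratum via Theorem~\ref{convergencecriterion}.

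First, I would apply Theorem~\ref{criterionprop} with $\bf L=\bf G$. If $\liminf_n\delta_{K,\Delta,F}({\bf H},g_n)>0$, the sequence is sequentially compact on $\Gamma\backslash G$; Theorem~\ref{ratner} then identifies any weak limit as homogeneous, and its pushforward to $\overline{\Gamma\backslash X}^S_{\rm max}$ gives a homogeneous limit supported on $\Gamma\backslash X$, as required.

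Otherwise, inspecting the proof of Theorem~\ref{criterionprop} (in particular its use of Theorem~\ref{GOh}) furnishes, after a further extraction, a fixed $\beta\in\Delta$ and $\lambda_n=\gamma_n c\in\Gamma F$ (with $c\in F$ fixed) such that ${\bf H}\subset\lambda_n{\bf P}_\beta\lambda_n^{-1}$ and $d_\beta(g_n^{-1}\lambda_n)\to 0$. Using Remark~\ref{freegamma} to absorb $\gamma_n$, and relabelling ${\bf P}:=c{\bf P}_\beta c^{-1}$, which is a fixed maximal $\QQ$-parabolic, we reduce to the setting where each ${\bf H}^{(n)}:=\gamma_n^{-1}{\bf H}\gamma_n$ is contained in $\bf P$. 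Writing the rational Langlands decomposition $g'_n:=\gamma_n^{-1}g_n=p_na_nk_n\in H_{\bf P}A_{\bf P}K$, Lemma~\ref{dalpha} together with the maximality of $\bf P$ converts $d_\beta(g_n^{-1}\lambda_n)\to 0$ into $\phi(a_n)\to\infty$ for every $\phi\in\Phi(P,A_{\bf P})$. Hypotheses (i) and (ii) of Theorem~\ref{convergencecriterion} are then satisfied.

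\textbf{The main obstacle} is hypothesis (iii) of Theorem~\ref{convergencecriterion}: the weak convergence in $\mathcal{P}(\Gamma_{P}\backslash P)$ of the homogeneous measures $\nu_n$ associated with ${\bf H}^{(n)}$ and $p_n$. I plan to address this by iterating the same dichotomy inside $\bf M_{\bf P}$, whose $\QQ$-rank is strictly less than that of $\bf G$. Applying Theorem~\ref{criterionprop} to $\bf H_{\bf P}=\bf N_{\bf P}\bf M_{\bf P}$ with Levi ${\bf M}_{\bf P}$ either yields tightness of $(\nu_n)$ on $\Gamma_P\backslash P$, in which case Theorem~\ref{ratner} identifies any weak limit as homogeneous, or forces a descent to a still smaller Levi where the same argument repeats. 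The iteration terminates because the $\QQ$-rank strictly decreases, with the $\QQ$-anisotropic base case handled as in Theorem~\ref{rank0}. A technical subtlety is that after the first descent the ${\bf H}^{(n)}$ are no longer of the form ${\bf M}_\gamma^{\rm nc}$ in the ambient smaller Levi, so the inductive statement must be phrased for arbitrary sequences of semisimple type-$\mathcal{H}$ subgroups rather than just those of the special form in the statement of the theorem. Once (iii) is obtained, Theorem~\ref{convergencecriterion} assembles everything into a homogeneous limit supported on the boundary stratum $\Gamma_P\backslash X_P$, completing the proof.
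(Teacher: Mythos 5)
Your reduction to the boundary parabolic ${\bf P}={\bf P}_\beta$ and the use of Theorem~\ref{criterionprop} followed by Theorem~\ref{convergencecriterion} matches the paper up to the point where hypothesis~(iii) needs to be verified. But your strategy for (iii) --- iterating the non-divergence dichotomy inside ${\bf M}_{\bf P}$ and descending to ever smaller Levis until the $\QQ$-rank drops to $0$ --- is exactly the naive induction that the paper explicitly points out does \emph{not} work (see the discussion at the end of Section~\ref{s7} and Lemma~\ref{warning}). If Theorem~\ref{criterionprop} failed again inside ${\bf M}_\beta$, you would produce a second root $\beta'$ and a new $A^\beta$-component $b_n$ with $\beta'(b_n)\to\infty$; but then the $A$-component of $g_n$ relative to the smaller parabolic is $b_n a_n$, and by Lemma~\ref{warning} the restrictions of the \emph{other} simple roots to $A^\beta$ have non-positive coefficients, so $\alpha(b_n a_n)$ for $\alpha\neq\beta'$ need not tend to $\infty$ --- it may tend to $0$. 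This is precisely the ``herein lies the problem'' obstruction worked through at length in the $\SL_3$ proof of Theorem~\ref{teo8.1}, and your plan does not say how to overcome it.

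What makes Theorem~\ref{levi} tractable is a special feature of ${\bf H}_n={\bf M}^{\rm nc}_\alpha$ that short-circuits the induction: if $\bf G$ has $\QQ$-rank $r$, then ${\bf H}_n$ has $\QQ$-rank $r-1$, which equals the $\QQ$-rank of ${\bf M}_\beta$. Since any proper parabolic subgroup of ${\bf H}_\beta$ would force ${\bf H}_n$ into a semisimple group of $\QQ$-rank at most $r-2$, the group ${\bf H}_n$ is contained in \emph{no} proper parabolic of ${\bf H}_\beta$. Consequently $\delta_{K_\beta,\Delta_\beta,F_\beta}({\bf H}_n,h_n)=\infty$ for all $n$, the hypothesis of Theorem~\ref{criterionprop} on $\Gamma_\beta\backslash P_\beta$ is satisfied vacuously, and the descent terminates after a single step with no second application of the dichotomy. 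This $\QQ$-rank observation is the crux of the proof and is absent from your proposal; without it, your iterative scheme runs straight into the obstruction the paper warns about.
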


\begin{proof}
After possibly replacing $K$, we can and do assume that $A$ is invariant under the Cartan involution of $G$ associated with $K$. 

Let $F$ denote a $\Gamma$-set for $\bf G$. If
\begin{align}\label{maxcriteriononlevel1}
\liminf_{n\rightarrow\infty}\ \delta_{K,\Delta,F}({\bf H}_n,g_n)>0,
\end{align}
then, by Theorem \ref{criterionprop}, after possibly extracting a subsequence, the sequence of homogeneous probability measures on $\Gamma\backslash G$ associated with the ${\bf H}_n$ and the $g_n$ is convergent in $\mathcal{P}(\Gamma\backslash G)$, in which case the proof concludes as in the proof of Theorem \ref{rank0}.

Therefore, suppose that (\ref{maxcriteriononlevel1}) does not hold. By Lemma \ref{dalpha}, we can and do extract a subsequence such that, for some $\beta\in\Delta$, and for every $n\in\NN$, there exists $\lambda_n\in\Gamma F$ such that ${\bf H}_n$ is contained in $\lambda_n{\bf P}_\beta \lambda^{-1}_n$ and, if we write
\begin{align*}
\lambda^{-1}_ng_n=h_na_nk_n\in H_{\beta}A_{\beta}K,
\end{align*}
then
\begin{align*}
\beta(a_n)\rightarrow\infty,\text{ as }n\rightarrow\infty.
\end{align*}
Furthermore, after possibly extracting a subsequence, we can and do assume that $\lambda_n=\gamma_n c$, where $\gamma_n\in\Gamma$, and $c\in F$ is fixed. Therefore, we can and do replace ${\bf H}_n$ with $\lambda^{-1}_n{\bf H}_n\lambda_n$ and $g_n$ with $\lambda^{-1}_ng_n$ and relabel them ${\bf H}_n$ and $g_n$, respectively. That is, ${\bf H}_n$ is contained in ${\bf H}_\beta$
and
\begin{align*}
g_n=h_na_nk_n\in H_{\beta}A_{\beta}K.
\end{align*}

Now, if the $\QQ$-rank of $\bf G$ is $r$, then the $\QQ$-rank of ${\bf H}_n$ is $r-1$. On the other hand, if ${\bf H}_n$ were contained in a parabolic subgroup of ${\bf H}_\beta$, it would ncessarily be contained in a semisimple subgroup of $\QQ$-rank $r-2$, which is a contradiction. Therefore, $H_n$ is contained in no parabolic subgoup of ${\bf H}_\beta$, and we conclude from Theorem \ref{criterionprop}, after possibly extracting a subsequence, the sequence of homogeneous probability measures on $\Gamma_\beta\backslash P_\beta$ associated with the ${\bf H}_n$ and the $h_n$, where $\Gamma_\beta:=\Gamma\cap P_\beta$, is convergent in $\mathcal{P}(\Gamma_\beta\backslash P_\beta)$. Therefore, the result follows from Theorem \ref{convergencecriterion} with ${\bf P}={\bf P}_\beta$.
\end{proof}

\section{Translates of the unipotent radical of a minimal parabolic}\label{s10}

We will prove the following case of Conjecture \ref{mainconj}.

\begin{teo}\label{unip}
Conjecture \ref{mainconj} holds when, for each $n\in N$, ${\bf H}_n$ is equal to the unipotent radical of a minimal parabolic subgroup of $\bf G$.
\end{teo}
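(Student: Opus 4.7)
The plan is to proceed by induction on the $\QQ$-rank $r$ of ${\bf G}$. The base cases $r \in \{0, 1\}$ are handled by Theorems \ref{rank0} and \ref{rank1}. For the inductive step ($r \geq 2$), I first use Remark \ref{freegamma} and the finiteness of $\Gamma$-conjugacy classes of minimal parabolic subgroups of ${\bf G}$ to extract a subsequence and reduce to ${\bf H}_n = {\bf N}$, the unipotent radical of a fixed minimal parabolic ${\bf P}_0$. I then pick a maximal $\QQ$-split torus ${\bf A}$ in the Levi of ${\bf P}_0$ and a maximal compact $K$ with $A$ stable under the associated Cartan involution. Setting $\Delta := \Delta({\bf P}_0, {\bf A})$ and letting $F \subset G_\QQ$ be a $\Gamma$-set for ${\bf G}$, I apply the rational Langlands decomposition $g_n = u_n m_n a_n k_n$ and extract a further subsequence so that $\alpha(a_n) \to t_\alpha \in [0, +\infty]$ for each $\alpha \in \Delta$.

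Next, I apply Theorem \ref{criterionprop}. If $\liminf_n \delta_{K,\Delta,F}({\bf N}, g_n) > 0$, the measures are sequentially compact in $\mathcal{P}(\Gamma \backslash G)$; I extract a convergent subsequence and invoke Theorem \ref{ratner} to identify the limit as a homogeneous measure on $\Gamma \backslash G$, whose pushforward to $\overline{\Gamma \backslash X}^S_{\rm max}$ is supported on $\Gamma \backslash X$. Otherwise, combining Lemma \ref{dalpha} with Lemma \ref{mustbestandard}, after a further extraction I obtain a fixed standard maximal parabolic ${\bf P}_J \supset {\bf N}$, a fixed $c \in F$, a sequence $\gamma_n \in \Gamma$, and a simple root $\alpha \in \Delta$ with $\gamma_n c {\bf P}_\alpha c^{-1} \gamma_n^{-1} = {\bf P}_J$ and $d_\alpha(g_n^{-1}\gamma_n c) \to 0$. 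Transporting the rational Langlands decomposition via the conjugation by $\gamma_n c$ then yields $g_n = p_n a_n' k_n'$ with $p_n \in P_J$, $a_n' \in A_{{\bf P}_J}$, $k_n' \in K$, and at least one simple root of $\Phi(P_J, A_{{\bf P}_J})$ sending $a_n'$ to $\infty$.

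At this stage I apply the inductive hypothesis to ${\bf M}_{{\bf P}_J}$, whose $\QQ$-rank is $r - 1 < r$. The measures on $\Gamma_{{\bf P}_J} \backslash P_J$ associated with ${\bf N}$ and $p_n$ project to measures on a locally symmetric space for ${\bf M}_{{\bf P}_J}$ associated with ${\bf N} \cap {\bf M}_{{\bf P}_J}$ (the unipotent radical of a minimal parabolic of ${\bf M}_{{\bf P}_J}$) and the projection of $p_n$ to ${\bf M}_{{\bf P}_J}$. By induction these projected measures converge on the Satake compactification of the associated ${\bf M}_{{\bf P}_J}$-locally symmetric space. If the limit is supported in the interior, then, combined with the boundedness of the $A^J$-component of $a_n$ (controlled by Lemma \ref{warning}) and the divergence of every simple root in $\Phi(P_J, A_{{\bf P}_J})$ against $a_n'$ (again via Lemma \ref{warning}), Theorem \ref{convergencecriterion} applies with ${\bf P} = {\bf P}_J$ and yields the desired convergence. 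Otherwise the limit is supported on a proper boundary component corresponding to a parabolic ${\bf Q} \subsetneq {\bf M}_{{\bf P}_J}$, which lifts to a standard parabolic ${\bf Q}' \subsetneq {\bf P}_J$ of ${\bf G}$ containing ${\bf N}$; I replace ${\bf P}_J$ by ${\bf Q}'$ and iterate. The process terminates in at most $r$ steps.

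The main obstacle will be executing this iterative argument rigorously: at each stage we must ensure that the escape direction selects a refined standard parabolic of ${\bf G}$ containing ${\bf N}$ (guaranteed by Lemma \ref{mustbestandard}), and that the projections of the Langlands data to successive Levi subgroups remain compatible with the arithmetic subgroup $\Gamma$, so that the induction can genuinely be applied at each stage. Controlling the $A^I$-components via the non-positivity statement of Lemma \ref{warning} is what ensures that the roots diverging at one stage of the iteration continue to diverge with respect to the refined parabolic, allowing the inductive step to pass through cleanly.
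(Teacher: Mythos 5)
Your proposal takes a genuinely different route from the paper — an induction on $\QQ$-rank, iterating through nested parabolics — whereas the paper's proof is non-inductive: it makes a single global choice of $I\subseteq\Delta$ (the \emph{maximal} subset for which all the $\chi_\alpha((a_n^I)^{-1})$ stay bounded below) and then verifies the divergence $\alpha(a_{n,I})\to\infty$ for $\alpha\in\Delta\setminus I$ directly using the theory of quasi-fundamental weights. Unfortunately, your approach has a genuine gap, and it is precisely the one the paper flags in the remark following Theorem \ref{rank1} and again, concretely, in the middle of the proof of Theorem \ref{teo8.1}.

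The problematic step is the final claim, that ``controlling the $A^I$-components via the non-positivity statement of Lemma \ref{warning} is what ensures that the roots diverging at one stage of the iteration continue to diverge with respect to the refined parabolic.'' This is exactly backwards. Lemma \ref{warning} says that for $\beta\in\Delta\setminus I$, the restriction of $\beta$ to ${\bf A}^I$ is a \emph{non-positive} combination of the restricted simple roots; hence when you pass from ${\bf P}_J$ to a smaller standard parabolic, the $A^J$-component $b_n$ coming from the Levi satisfies $\alpha(b_n)\to 0$ (or stays bounded) for the root $\alpha$ that diverged at the previous stage, and the behavior of $\alpha(b_n a_n)$ is indeterminate. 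The paper makes this explicit in the ${\bf SL}_3$ case: after the first reduction one has $\alpha(a_n)\to\infty$, after the second $\beta(b_n)\to\infty$, but $\alpha(b_n)=\beta(b_n)^{-1/2}\to 0$, so $\alpha(b_n a_n)$ need not diverge; this is ``herein lies the problem.'' Your iteration therefore cannot, as written, feed Theorem \ref{convergencecriterion}'s hypothesis (ii) for the refined parabolic. The paper sidesteps this by avoiding iteration altogether: it works with the characters $\chi_\alpha$ (rather than the raw roots), chooses $I$ by a maximality/boundedness condition in one step, and then proves divergence of $\alpha(a_{n,I})$ for $\alpha\in\Delta\setminus I$ by exploiting the \emph{positivity} of quasi-fundamental weight coefficients (Lemma \ref{fundweights}, Lemma \ref{restr}, and crucially Lemma \ref{nonneg} — not Lemma \ref{warning}). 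That positivity is the ingredient your argument is missing and cannot obtain from Lemma \ref{warning}, which cuts the other way.

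A secondary point: the claim that ``boundedness of the $A^J$-component of $a_n$'' is ``controlled by Lemma \ref{warning}'' is also not right; boundedness of that component (when it holds) comes from the nondivergence of the projected measures on the Levi, not from any root-coefficient lemma. Your reduction to a fixed minimal parabolic and fixed $c\in F$, and your use of Lemma \ref{mustbestandard} to restrict attention to standard parabolics, are both correct and appear in the paper's proof as well.
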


\begin{proof}
After possibly extracting a subsequence and conjugating, we can and do assume that, for all $n\in\NN$, ${\bf H}_n={\bf N_P}$ for a fixed minimal parabolic subgroup ${\bf P}$ of $\bf G$. Let $\bf A$ denote a maximal split subtorus of $\bf G$ contained in $\bf P$. We can and do assume that $A$ is invariant under the Cartan involution of $G$ associated with $K$. Writing
\begin{align*}
g_n=\nu_nm_na_nk_n\in N_{\bf P}M_{\bf P}A_{\bf P}K,
\end{align*}  
we see that the homogeneous probability measure on $\overline{\Gamma\backslash X}^S_{\rm max}$ associated with ${\bf N}_{\bf P}$ and $g_n$ is equal to the homogeneous probability measure associated with ${\bf N}_{\bf P}$ and $m_na_n$.

Let $\Delta:=\Delta({\bf P},{\bf A})$ and, for any subset $I\subseteq\Delta$, let ${\bf P}_I$ denote the standard parabolic subgroup of $\bf G$ associated with $I$. Then ${\bf A}={\bf A}_I{\bf A}^I$ and we write
\begin{align*}
a_n=a_{n,I}a^I_n\text{, where } a_{n,I}\in A_I\text{ and } a^I_n\in A^I.
\end{align*}

For each $\alpha\in\Delta$, let $d_{\alpha}:=d_{{\bf P}_\alpha,K}$ and let $\chi_\alpha:=\chi_{{\bf P}_\alpha}$. Let $F$ be a $\Gamma$-set for $\bf G$. Note that, for any $I\subseteq\Delta$, and any $\lambda_n\in\Gamma F\cap{\bf P}_{\alpha}(\QQ)$,
\begin{align*}
d_\alpha((a_n^{I})^{-1}m^{-1}_n\lambda_n)=\chi_\alpha((a_n^{I})^{-1})\chi_\alpha(m^{-1}_n)\chi_\alpha(\lambda_n).
\end{align*}
Furthermore, by Lemma \ref{bounded}, $\chi_\alpha(\lambda_n)=d_\alpha(\lambda_n)\geq c_1$, for some $c_1>0$ depending only on $G$ and $F$, and $\chi_\alpha(m^{-1}_n)=1$ because $m_n\in M_{\bf P}\subseteq M_\alpha$. Therefore, 
\begin{align*}
d_\alpha((a_n^{I})^{-1}m^{-1}_n\lambda_n)\geq c_1\cdot\chi_\alpha((a_n^{I})^{-1}).
\end{align*}

Choose $I\subseteq\Delta$ maximal such that there exists $c_2>0$ satisfying
\begin{align*}
\chi_\alpha((a_n^{I})^{-1})>c_2
\end{align*}
for every $n\in\NN$ and $\alpha\in\Delta$. That is, $I$ is a maximal subset such that, for the corresponding decomposition of $a_n$, all of the characters $\chi_\alpha$ are bounded below on $(a^I_n)^{-1}$. Note that such a set exists because, when $I$ is empty, ${\bf M}_I={\bf M_P}$ is anisotropic and so $a^I_n=1$.

By Lemma \ref{mustbestandard}, ${\bf N}_{\bf P}$ is contained in a parabolic subgroup $\bf Q$ of $\bf G$ if and only if ${\bf Q}={\bf P}_I$ for some subset $I\subseteq\Delta$. Therefore, Theorem \ref{criterionprop} implies that the set of homogeneous probability measures on $\Gamma_I\backslash H_I$ associated with ${\bf N}_{\bf P}$ and the $m_na^I_n$ is sequentially compact. We are using here the fact that, by Lemma \ref{int}, for any $\alpha\in\Delta$, ${\bf P}_\alpha$ is its own nomalizer in $\bf G$. Hence, for any $\lambda\in\Gamma F$, we have $\lambda{\bf P}_\alpha\lambda^{-1}={\bf P}_\alpha$ if and only if $\lambda\in\Gamma F\cap{\bf P}_{\alpha}(\QQ)$.

Therefore, Theorem \ref{unip} follows from Theorem \ref{convergencecriterion}, if we can show that
\begin{align*}
\alpha(a_{n,I})\rightarrow\infty,\text{ as }n\rightarrow\infty,\text{ for all }\alpha\in\Delta\setminus I.
\end{align*}
To that end, let $\alpha\in\Delta\setminus I$ and let $I_\alpha:=I\cup\{\alpha\}$. Recall that $I_\alpha$ restricts to a set of simple $\QQ$-roots of ${\bf M}_{I_\alpha}$ with respect to ${\bf A}^{I_\alpha}$. As in \cite{BJ:compactifications}, III.1.16, we obtain a maximal proper standard parabolic ${\bf P}^{I_\alpha}_I$ of ${\bf M}_{I_\alpha}$ and a rational Langlands decomposition
\begin{align*}
{P}^{I_\alpha}_I={N}^{I_\alpha}_I{M}^{I_\alpha}_I{A}^{I_\alpha}_I,
\end{align*}
such that 
\begin{align*}
{P}_I={N}_{I_\alpha}{N}^{I_\alpha}_I\cdot {M}^{I_\alpha}_I\cdot {A}^{I_\alpha}_I{A}_{I_\alpha}
\end{align*}
is the rational Langlands decomposition of ${P}_I$. In particular, ${\bf A}_I={\bf A}^{I_\alpha}_I{\bf A}_{I_\alpha}$ and so
\begin{align*}
a_{n,I}=a_{n,I_\alpha}a^{I_\alpha}_{n,I},\text{ where }a_{n,I_\alpha}\in A_{I_\alpha}\text{ and }a^{I_\alpha}_{n,I}\in A^{I_\alpha}_I.
\end{align*}
Therefore,
\begin{align*}
\alpha(a_{n,I})=\alpha(a_{n,I_\alpha}a^{I_\alpha}_{n,I})=\alpha(a^{I_\alpha}_{n,I}).
\end{align*}

We also have the decomposition
\begin{align*}
a_n=a_{n,I_\alpha}a^{I_\alpha}_n,\text{ where }a_{n,I_\alpha}\in A_{I_\alpha}\text{ and }a^{I_\alpha}_n\in A^{I_\alpha},
\end{align*}
and, from the maximality of $I$, we know that, after possibly extracting a subsequence, we can and do assume that, for some $\beta\in\Delta$,
\begin{align*}
\chi_{\beta}((a^{I_\alpha}_n)^{-1})\rightarrow 0,\text{ as }n\rightarrow\infty.
\end{align*}
We note that $\beta\in I_\alpha$ since, otherwise, $\chi_\beta$ would be trivial on ${\bf A}^{I_\alpha}$.
The decompositions
\begin{align*}
{\bf A}={\bf A}^{I_\alpha}{\bf A}_{I_\alpha}={\bf A}^I{\bf A}_I={\bf A}^I{\bf A}^{I_\alpha}_I{\bf A}_{I_\alpha}
\end{align*}
yield $a^{I_\alpha}_n=a^I_na^{I_\alpha}_{n,I}$ and, since $\chi_\beta((a^I_n)^{-1})>c_2$, for every $n\in\NN$, we conclude that
\begin{align*}
\chi_\beta((a^{I_\alpha}_{n,I})^{-1})\rightarrow 0\text{ as }n\rightarrow\infty.
\end{align*}

Now since $X^*({\bf A}^{I_\alpha}_I)$ is a one dimensional $\ZZ$-module and the restriction of $\alpha$ to ${\bf A}^{I_\alpha}_I$ is non-trivial, 
\begin{align*}
\chi_{\beta|{\bf A}^{I_\alpha}_I}=c_\beta(\alpha)\cdot \alpha_{|{\bf A}^{I_\alpha}_I}
\end{align*}
for some $c_\beta(\alpha)\in\QQ$, and we claim that $c_\beta(\alpha)>0$. To see this, recall from Lemma \ref{fundweights} that $\chi_\beta$ belongs to a set of quasi-fundamental weights in $X^*({\bf A})_\QQ$. Therefore, by Lemma \ref{restr}, its restriction to ${\bf A}^{I_\alpha}$ belongs to a set of quasi-fundamental weights in $X^*({\bf A}^{I_\alpha})_\QQ$. It follows from Lemma \ref{nonneg}, then, that the restriction of $\chi_\beta$ to ${\bf A}^{I_\alpha}$ is a non-negative linear combination of the elements of $I_\alpha$. Since $\chi_{\beta|{\bf A}^{I_\alpha}_I}$ is non-zero, the claim follows. We conclude that
\begin{align*}
\alpha(a_{n,I})=\alpha(a^{I_\alpha}_{n,I})\rightarrow\infty\text{ as }n\rightarrow\infty,
\end{align*}
as required.
\end{proof}

\section{Digression on Levi spheres}\label{LS}\label{s11}

Before proving a final case of Conjecture \ref{mainconj}, we recall some facts pertaining to buildings and so-called Levi spheres. We refer the reader to \cite{brown:buildings}, \cite{ronan:buildings}, and \cite{tits:buildings} for more details on buildings.

Let ${\bf G}$ be a reductive algebraic group, containing a non-trivial split torus, and let $\mathcal{B}:=\mathcal{B}({\bf G})$ be its associated (Tits) building. That is, $\mathcal{B}$ is a simplicial complex whose simplexes are in one-to-one correspondence with the (rational) parabolic subgroups of ${\bf G}$. If $s\in\mathcal{B}$ is a simplex, we denote by ${\bf P}_s$ the corresponding parabolic subgroup of ${\bf G}$, and if $\bf P$ is a parabolic subgroup of $\bf G$, we denote by $s_{\bf P}\in\mathcal{B}$ the corresponding simplex. Then $s\in\mathcal{B}$ is a face of $t\in\mathcal{B}$ if and only if ${\bf P}_t$ is contained in ${\bf P}_s$. In particular, the vertices of $\mathcal{B}$ are in one-to-one correspondence with the maximal proper parabolic subgroups of $\bf G$, and the empty simplex corresponds to $\bf G$ itself. The set of types of vertices of $\mathcal{B}$ is in bijection with the set of vertices of the (rational) Dynkin diagram of ${\bf G}$. The apartments of $\mathcal{B}$ are in one-to-one correspondence with the maximal split tori of ${\bf G}$. 

Let ${\bf P}_0$ be a minimal parabolic subgroup of ${\bf G}$ and let ${\bf A}$ be a maximal split subtorus of $\bf G$ contained in ${\bf P}_0$. The set $\Delta:=\Delta({\bf P}_0,{\bf A})$ is a set of simple $\QQ$-roots of $\bf G$ with respect to $\bf A$. Let $V:=X_*({\bf A})\otimes\RR$ and let $V^*:=X^*({\bf A})\otimes\RR$. There is a canonical perfect pairing
\begin{align*}
\langle\cdot,\cdot\rangle:V\times V^*\rightarrow\RR,
\end{align*}
and we identify $V^*$ with the dual of $V$. We choose a basis $\{\psi_\alpha\}_{\alpha\in\Delta}$ of $V^*$ such that $\langle\psi_\beta,\alpha\rangle=\delta_{\beta\alpha}$, for all $\beta,\alpha\in\Delta$.

Let $W$ denote the Weyl group of $\bf A$ (which acts linearly on $V$ and $V^*$). We equip $V$ with a $W$-invariant scalar product $(\cdot,\cdot)$, which allows us to identify $V$ with its dual and, therefore, with $V^*$. Since $(\cdot,\cdot)$ and $\langle\cdot,\cdot\rangle$ are $W$-invariant, this identification is $W$-equivariant. 

The exponential map
\begin{align*}
\Lie(A)\rightarrow A:a\mapsto\exp(a)
\end{align*}
is an isomorphism of real Lie groups, and the map
\begin{align*}
A\rightarrow V:a\mapsto (\log\alpha(a))_{\alpha\in\Delta}
\end{align*}
is also an isomorphism. Therefore, we have obtained identifications $\Lie(A)=A=V$, which are all $W$-equivariant.

For each $\alpha\in V^*$, we define a hyperplane
\begin{align*}
H_\alpha:=\{x\in V:\langle x,\alpha\rangle=0\},
\end{align*}
a half-space
\begin{align*}
\Theta_\alpha:=\{x\in V:\langle x,\alpha\rangle>0\},
\end{align*}
and its closure
\begin{align*}
\overline{\Theta}_\alpha:=\{x\in V:\langle x,\alpha\rangle\geq 0\}.
\end{align*}
Note that, for any $w\in W$, $wH_{\alpha}=H_{w\alpha}$ and $w\Theta_\alpha=\Theta_{w\alpha}$. 

For $I\subset\Delta$, we define
\begin{align*}
C_I:=\bigcap_{\alpha\in I}H_\alpha\cap\bigcap_{\alpha\notin I}\Theta_\alpha,
\end{align*}
and so
\begin{align*}
wC_I=\bigcap_{\alpha\in I}H_{w\alpha}\cap\bigcap_{\alpha\notin I}\Theta_{w\alpha},
\end{align*}
for any $w\in W$. The $wC_I$ yield a partition of $V$ and so, if we denote by $[wC_I]$ the intersection of $wC_I$ with the unit sphere $S(V)$ in $V$, we obtain a partition of $S(V)$ and, in fact, the simplices of a simplicial complex $\mathcal{S}:=\mathcal{S}(W,\Delta)$, for which the simplex $[w_1C_{I_1}]$ is a face of $[w_2C_{I_2}]$ if and only if, as subsets of $S(V)$, $[w_1C_{I_1}]$ is contained in the closure of $[w_2C_{I_2}]$. One may verify that the map
\begin{align*}
\mathcal{S}\rightarrow\mathcal{A}:[wC_I]\mapsto s_{w{\bf P}_Iw^{-1}}
\end{align*}
is a $W$-equivariant isomorphism of simplicial complexes.

A Levi sphere of $\mathcal{S}$, as defined in \cite{serre:reduct}, Section 2.1.6, is a simplicial subcomplex of $\mathcal{S}$ given by the intersection of $\mathcal{S}$ with a subvector space of $V$. Let $I$ be a subset of $\Delta$ and let 
\begin{align*}
{\bf A}_I:=(\cap_{\alpha\in I}\ker\alpha)^\circ,
\end{align*}
as usual. Under the above identification, 
\begin{align*}
\Lie(A_I)=\cap_{\alpha\in I}H_\alpha
\end{align*}
and $\mathcal{S}_I:=\Lie(A_I)\cap\mathcal{S}$ is a (standard) Levi sphere. The simplices of $\mathcal{S}_I$ parametrize the parabolic subgroups associated with $\mathcal{A}$ containing the Levi subgroup ${\bf A}_I{\bf M}_I$ of ${\bf P}_I$. The simplices of $\mathcal{S}_I$ of maximal dimension parametrize the parabolic subgroups associated with $\mathcal{A}$ such that ${\bf A}_I{\bf M}_I$ is a Levi subgroup of those parabolic subgroups. 

\section{Translates of subgroups of ${\bf M}_I$ by elements of ${\bf A}_I$}\label{s13}

Finally, we prove the following case of Conjecture \ref{mainconj}.

\begin{teo}\label{MA}
Let ${\bf P}_0$ be a minimal parabolic subgroup of $\bf G$ and let $\bf A$ be a maximal split subtorus of $\bf G$ contained in ${\bf P}_0$ such that $A$ is invariant under the Cartan involution of $G$ associated with $K$. Then Conjecture \ref{mainconj} holds when, for some $I\subseteq\Delta:=\Delta({\bf P}_0,{\bf A})$, and for each $n\in\NN$, ${\bf H}_n$ is a subgroup of ${\bf M}_I$ and $g_n\in A_I$.  
\end{teo}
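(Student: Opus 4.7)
The plan is to combine the convergence criterion Theorem \ref{convergencecriterion} with the Levi sphere analysis of Section \ref{s11}, ultimately reducing to an application of Theorem \ref{criterionprop} and Theorem \ref{ratner} inside a smaller reductive group; no induction on $\QQ$-rank will be required.

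First, I extract a subsequence so that for every $\alpha \in \Delta$ the quantity $\alpha(a_n)$ converges in $[0,+\infty]$. Since $\alpha(a_n) = 1$ for $\alpha \in I$, the only non-trivial behaviour occurs on $\Delta \setminus I$. Let $I_b \subseteq \Delta \setminus I$ be the set of $\alpha$ for which $\alpha(a_n)$ tends to a finite positive limit, and set $I' := I \cup I_b$. The direction of $\log(a_n) \in \mathfrak{a}_I := \Lie(A_I)$ then determines a unique open simplex of the Levi sphere $\mathcal{S}_I$, which by the identification in Section \ref{s11} of $\mathcal{S}_I$ with the unit sphere of $\mathfrak{a}_I$ corresponds to a rational parabolic subgroup ${\bf Q}$ of ${\bf G}$; concretely, ${\bf Q}$ is obtained from the standard parabolic ${\bf P}_{I'}$ by a Weyl conjugation that aligns the signs of the $\alpha(a_n)$ for $\alpha \in \Delta \setminus I'$. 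The Levi of ${\bf Q}$ contains ${\bf M}_{I'}{\bf A}_{I'}$ and hence ${\bf M}_I$, and by construction every $\alpha \in \Phi({\bf Q},{\bf A}_{\bf Q})$ satisfies $\alpha(a_n) \to \infty$.

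Next, I decompose $a_n = p_n \cdot a_{n,{\bf Q}} \in A^{\bf Q} \cdot A_{\bf Q}$ relative to ${\bf Q}$. The element $p_n$ lies in a relatively compact subset of $A^{\bf Q}$, since its character values on the simple roots of ${\bf M}_{\bf Q}$ are exactly the bounded or trivial values $\alpha(a_n)$ for $\alpha \in I'$. I then verify the hypotheses of Theorem \ref{convergencecriterion} for ${\bf Q}$: (i) holds because ${\bf H}_n \subseteq {\bf M}_I \subseteq {\bf Q}$; (ii) holds by the previous paragraph; and (iii) reduces, via the embedding $(\Gamma \cap M_{\bf Q}) \backslash M_{\bf Q} \hookrightarrow \Gamma_Q \backslash Q$ together with the fact that push-forward commutes with weak convergence, to the convergence of the homogeneous measures on $(\Gamma \cap M_{\bf Q})\backslash M_{\bf Q}$ associated with ${\bf H}_n$ and $p_n$.

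This last step is a direct application of Theorem \ref{criterionprop} inside ${\bf M}_{\bf Q}$. Because $\{p_n\}$ is relatively compact, the quantity $\delta$ attached to this sequence inside ${\bf M}_{\bf Q}$ is bounded below uniformly in $n$: for fixed $\lambda$ and $\alpha$, the function $p \mapsto d_\alpha(p^{-1}\lambda)$ is continuous and positive, and a standard compactness-and-operator-norm argument combined with Lemma \ref{bounded} yields a uniform lower bound across all relevant $\lambda$ and $n$. Theorem \ref{criterionprop} then yields sequential compactness in $\mathcal{P}((\Gamma \cap M_{\bf Q})\backslash M_{\bf Q})$, and Theorem \ref{ratner} identifies the limit as homogeneous; feeding this back into Theorem \ref{convergencecriterion} delivers the required conclusion. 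The main technical obstacle will be the bookkeeping in the Levi sphere needed to produce ${\bf Q}$ correctly, especially the Weyl conjugation forced by any $\alpha(a_n) \to 0$, and the verification that the split $a_n = p_n \cdot a_{n,{\bf Q}}$ cleanly separates the bounded from the unbounded behaviour.
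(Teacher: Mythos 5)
Your proposal follows essentially the paper's proof: both locate, via the Levi sphere inside $\Lie(A_I)$, a parabolic ${\bf Q}$ (the paper's ${\bf P}=w{\bf P}_{J\cup R_0}w^{-1}$) whose Levi contains ${\bf M}_I$, decompose $a_n$ into a bounded part times a part in $A_{\bf Q}$ on which every element of $\Phi({\bf Q},{\bf A}_{\bf Q})$ diverges, and close with Theorem \ref{convergencecriterion}. One point you should tighten: extracting so that each $\alpha(a_n)$ converges in $[0,\infty]$ does not by itself pin down a single open simplex of $\mathcal{S}_I$ containing the direction of $\log(a_n)$ --- when some $\alpha(a_n)\to 1$, or when the ratios of several diverging roots oscillate, the chamber can change with $n$, and your phrase ``a Weyl conjugation that aligns the signs'' asserts rather than constructs the relevant element. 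The paper handles this cleanly: each $\log(a_n)$ lies in some $w_nC_{J_n}$, one extracts so that $(w,J)=(w_n,J_n)$ is constant (a finite set of possibilities), and one then \emph{proves} the in-proof Lemma $w{\bf A}_Jw^{-1}\subseteq{\bf A}_I$, after which $R_\infty,R_0$ and ${\bf P}=w{\bf P}_{J\cup R_0}w^{-1}$ are defined unambiguously; once you adopt this extraction your ${\bf Q}$ coincides with that ${\bf P}$. Conversely, your treatment of hypothesis (iii) of Theorem \ref{convergencecriterion} --- applying Theorem \ref{criterionprop} inside ${\bf M}_{\bf Q}$, and using the relative compactness of $\{p_n\}$ with a uniform form of Lemma \ref{bounded} (the operator norms of the $p_n^{-1}$ on $V_{{\bf P}_\alpha}$ are uniformly bounded, so $d_\alpha(p_n^{-1}\lambda)$ is uniformly comparable to $d_\alpha(\lambda)$) to get a positive lower bound on $\delta_{K,\Delta,F}({\bf H}_n,p_n)$ --- makes explicit a step that the paper's own proof merely states, and the argument you sketch for it is sound.
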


\begin{proof}
Since $g_n\in A_I$, we relabel it $a_n$. Since $a_n\in A$, there exists, by Section \ref{LS}, $w_n\in W$ and $J_n\subseteq\Delta$ such that $a_n\in w_nC_{J_n}$. Since $W$ and $\Delta$ are finite, after possibly extracting a subsequence, we can and do assume that $w:=w_n$ and $J:=J_n$ are fixed. 

\begin{lem}
We have $w{\bf A}_Jw^{-1}\subseteq{\bf A}_I$.
\end{lem}

\begin{proof}
Let $\beta\in I$. Since $w\Delta$ is a set of simple $\QQ$-roots for ${\bf G}$ with respect to $\bf A$, we can write $\beta=\sum_{\alpha\in\Delta}a_\alpha w\alpha$ for some $a_\alpha\in\QQ$. Since $a_n\in wC_J\cap A_I$, we have
\begin{align*}
1=\beta(a_n)=\prod_{\alpha\in\Delta}w\alpha(a_n)^{a_\alpha}=\prod_{\alpha\notin J}w\alpha(a_n)^{a_\alpha},
\end{align*}
for every $n\in\NN$. Since $\beta$ is either positive or negative with respect to the ordering given by $w\Delta$, either $a_\alpha\geq 0$ for all $\alpha\in\Delta$, or $a_\alpha\leq 0$ for all $\alpha\in\Delta$. Therefore, since $w\alpha(a_n)>1$ for all $\alpha\notin J$, we conclude that $a_\alpha=0$ for all $\alpha\notin J$. That is, $\beta$ is contained in the $\QQ$-span of the set $wJ$, which proves the claim.
\end{proof}

Since $w\alpha(a_n)>1$, for all $n\in\NN$ and all $\alpha\notin J$, after possibly extracting a subsequence, we can and do assume that 
\begin{align*}
w\alpha(a_n)\rightarrow c_\alpha\in[1,\infty],\text{ as }n\rightarrow\infty.
\end{align*}	
Therefore, we can write $\Delta\setminus J$ as the disjoint union of two subsets $R_\infty$ and $R_0$ defined such that $\alpha\in\Delta\setminus J$ is a member of $R_0$ if and only if $c_\alpha\in[1,\infty)$ (and $\alpha\in\Delta\setminus J$ is a member of $R_\infty$ if and only if $c=\infty$).

Let ${\bf A}_\infty:={\bf A}_{J\cup R_0}$ and ${\bf A}_0:={\bf A}_{J\cup R_\infty}$. Then ${\bf A}_J={\bf A}_\infty{\bf A}_0$ and we can write $a_n\in wA_Jw^{-1}$ as $a_{n,\infty}a_{n,0}$, where $a_{n,\infty}\in w{\bf A}_\infty w^{-1}$ and $a_{n,0}\in w{\bf A}_0w^{-1}$. In particular, for every $\alpha\in R_\infty$
\begin{align*}
w\alpha(a_{n,\infty})=w\alpha(a_n)\rightarrow\infty,\text{ as }n\rightarrow\infty,
\end{align*}
and, for every $\alpha\in R_0$,
\begin{align*}
w\alpha(a_{n,0})=w\alpha(a_n)\rightarrow c_\alpha\in[1,\infty),\text{ as }n\rightarrow\infty.
\end{align*}

As in the proof of \cite{borel:arithmetic-groups}, Proposition 12.6, we can represent $w$ by an element in $K$ (which we also denote by $w$) (where we use the fact that the Weyl group of $\bf A$ is naturally a subgroup of the Weyl group of ${\bf A}_\RR$). Therefore, if we let ${\bf P}:=w{\bf P}_{J\cup R_0}w^{-1}$, then $A_{\bf P}$ is equal to $wA_\infty w^{-1}$.

Let $\Gamma_P:=\Gamma\cap P$. We conclude that, after possibly extracting a subsequence, we can and do assume that the sequence of homogeneous probability measures on $\Gamma_P\backslash P$ associated with the ${\bf H}_n$ and $a_{n,0}$ converges. Therefore, the result follows from Theorem \ref{convergencecriterion}.
\end{proof}

\bibliography{EquiDraft}
\bibliographystyle{alpha}

\end{document}